\documentclass[11pt]{article}
\usepackage{graphicx}
\usepackage{amsmath}
\usepackage{amssymb}
\usepackage{amsfonts}
\usepackage{amsthm}
\usepackage{xcolor}
\usepackage{enumitem}
\usepackage{hyperref}
\numberwithin{equation}{section}
\def\R{\mathbb{R}}
\def\D{\mathcal{D}^{1,2}(\R^3)}

\def\P{\mathbb{P}}
\def\G{\mathcal{G}}
\def\W{\mathcal{W}}
\def\FF{\mathcal{S}_f}

\def\X{\mathbb{X}}
\def\Y{\mathbb{Y}}

\def\neweq#1{\begin{equation}\label{#1}}
\def\endeq{\end{equation}}
\def\eq#1{(\ref{#1})}
\def\eps{\varepsilon}

\def\eps{{\varepsilon}}

\hypersetup{
    colorlinks=true,
    linkcolor=blue,
    filecolor=magenta,
    urlcolor=cyan,
}

\newtheorem{theorem}{Theorem}[section]

\newtheorem{lemma}[theorem]{Lemma}
\newtheorem{proposition}[theorem]{Proposition}

\newtheorem{remark}[theorem]{Remark}
\newtheorem{example}[theorem]{Example}
\newtheorem{definition}[theorem]{Definition}

\title{Blow-up and uniqueness of Leray-Hopf solutions\\
to forced Navier-Stokes equations}
\author{Giovanni Paolo GALDI\footnote{Department of Mechanical Engineering and Materials Science -- University of Pittsburgh --
Benedum Engineering Hall 607 -- Pittsburgh, PA 15261 -- USA -- galdi@pitt.edu} -- Filippo GAZZOLA\footnote{Dipartimento di Matematica (Dipartimento di Eccellenza MUR 2023-2027)\ -- Politecnico di Milano
-- Piazza Leonardo da Vinci 32, 20133, Milano, Italy -- filippo.gazzola@polimi.it}}
\date{}
\begin{document}
\maketitle

\begin{abstract}
We prove the existence of forces and smooth initial data for which the associated Leray-Hopf solution to the three-dimensional Navier-Stokes equations is unique and global in time, satisfies the energy equality, and exhibits infinitely many (countably many) blow-up instants within a finite time interval. We consider several classes of forces and different blow-up strengths, all of which are shown to be sharp with respect to known boundedness results in the literature. Our method also yields examples of blow-up solutions for the forced three-dimensional Euler equations.

\end{abstract}

{\small
\textbf{Keywords:} Navier-Stokes equations, finite time blow-up, global uniqueness.\par
\textbf{AMS 2010 Subject Classification:} 35Q30, 76D03.
}

\baselineskip14pt

\tableofcontents

\vfill\eject

\section{Introduction}
The question of global regularity for the three-dimensional incompressible
Navier--Stokes equations is one of the central open problems in mathematical
fluid dynamics and constitutes one of the seven Millennium Prize Problems \cite{feffer}.
In the whole space $\mathbb R^3$, the forced Navier--Stokes problem reads
\begin{equation}\label{NS-intro}\begin{aligned}
&\left.
\begin{aligned}
 V_t -\Delta V+(V\cdot\nabla)V+\nabla P &= f
        \\
 \nabla\cdot V &=0\end{aligned}\right\}\ \text{in }\mathbb R^3\times(0,\infty),
        \\
&\quad V(\xi,0)=V_0(\xi),
        \ \ \xi=(x,y,z)\in\mathbb R^3.
\end{aligned}
\end{equation}
One of the formulations of the Millennium Problem \cite[Statement (C)]{feffer} asks whether there exist
smooth, divergence-free initial data $V_0$ of finite energy and a smooth forcing term $f$ for
which \eqref{NS-intro} fails to possess a global smooth solution. Despite the
enormous effort devoted to this question, the possibility of finite-time
singularity formation for smooth data and smooth forcing remains unresolved.

On the other hand, at the level of finite-energy weak solutions, global
existence has been known since the pioneering works of Leray \cite{leray} and Hopf \cite{hopf}. Recall
that, for the general forced problem \eqref{NS-intro}, a Leray--Hopf solution
is, in particular, a distributional solution satisfying\footnote{See Section
  \ref{results} for notation.}
\[
 V\in L^\infty_{\rm loc}(\mathbb R_+;L^2_\sigma(\mathbb R^3))
       \cap
       L^2_{\rm loc}(\mathbb R_+;H^1(\mathbb R^3)),
\]
together with the energy inequality
\begin{equation}\label{energy-intro}
 \frac12\|V(t)\|_{L^2(\R^3)}^2
 +\int_0^t\|\nabla V(\tau)\|_{L^2(\R^3)}^2\,d\tau
 \leq\frac12\|V(0)\|_{L^2(\R^3)}^2
 +\int_0^t\langle f(\tau),u(\tau)\rangle\,d\tau,\ \ t>0
\end{equation}
for the appropriate $f$. A precise definition, including
the assumptions on the force and the meaning of the energy inequality \eq{energy-intro}, is
given in Definition~\ref{LHsolutions} below together with the comments immediately following it. Thus, global existence in the Leray--Hopf
class does not by itself settle the fundamental questions of regularity and
uniqueness.

In this paper, we investigate these questions by allowing the forcing term to
have less than full smoothness. Precisely, we construct forcing terms $f$ and {\em smooth}
initial data for which singularities do occur in finite time. More
importantly, the regularity of the forces in our constructions is essentially
optimal: a slight improvement in their regularity, in the appropriate
function-space scale, rules out the corresponding type of singular behaviour
by known regularity results. In this sense, our examples approach from the
singular side some of the regularity thresholds separating presently known
regularity theory from the unresolved smooth-data problem.

More specifically, for an arbitrarily given  $T>0$, we construct classes of forcing terms
$f$ and smooth initial data for which the corresponding solution $(V,P)$ has
the following properties. First, $V$ is a global Leray--Hopf solution,
satisfies the energy equality -- namely, (\ref{energy-intro}) with the equality sign and all $t>0$ -- and is unique in the Leray--Hopf class.
Second, there exist an increasing sequence
\begin{equation}\label{1.3}
        \{T_m\}\subset(0,T)
\end{equation}
and a (possibly unbounded or constant) sequence of points
\begin{equation}\label{1.4}
        \{\xi_m\}\subset\mathbb R^3
\end{equation}
such that
\begin{equation}\label{1.5}
 (V,P)\ \text{is smooth in}\
 (\mathbb R^3\times\mathbb R_+)
 \setminus
 \bigcup_{m=1}^{\infty}\{(\xi_m,T_m)\}
,
\end{equation}
while $V$ develops a singularity at each $T_m$ in suitable norms, depending on the functional class where $f$ is prescribed. In other words, the solution
has  countably many blow-up instants in a finite
time interval where the strength of the singularities that can be produced
depends quantitatively on the regularity imposed on the forcing term.

A particularly significant feature of these examples is that singularity
formation occurs without any loss of uniqueness or of the energy equality.
This distinguishes the phenomenon considered here from the striking recent
developments concerning nonuniqueness of three-dimensional Navier--Stokes
solutions. Albritton, Bru\'e and Colombo \cite{albritton} constructed a forcing term for which
the same initial data give rise to distinct Leray solutions. More recently,
Coiculescu and Palasek \cite{coiculescu} established nonuniqueness for the unforced Cauchy
problem with initial data in the critical space $BMO^{-1}$, while Hou, Wang
and Yang \cite{hou} obtained nonuniqueness of Leray--Hopf solutions for the unforced
problem through a rigorous computer-assisted construction.

Our results concern a fundamentally different mechanism. The solutions
constructed here remain unique in the Leray--Hopf class and satisfy the
energy equality throughout their evolution, yet they develop singularities
at a prescribed countable collection of time--points. Consequently,
the singular behaviour exhibited in this paper cannot be attributed to
nonuniqueness of weak solutions. Rather, it is generated by the limited
regularity of the external force. Thus, {\em blow-up and uniqueness may occur simultaneously}.

Our findings are, moreover, {\em sharp} with respect to the available
regularity theory. For each class of forces considered below, we identify a
corresponding strength of blow-up and show that the regularity of the force
lies precisely at the threshold permitted by known boundedness criteria.
If the force is assumed to possess slightly better regularity, those criteria
prevent the corresponding norm of the solution from becoming unbounded.
The examples therefore complement the classical regularity theory by showing,
from the opposite direction, the essential optimality of these thresholds.
\par
The construction of our solutions is inspired by the concentration phenomena
arising in the Lane--Emden equation  \cite{lane,emden}. The basic idea can be described as
follows. We first introduce a family of solenoidal vector fields
$V=V(\xi,t)$, obtained by suitably modifying an analogous family of solutions
to the stationary Lane--Emden equation. The fields $V$ depend on the given time span $T$ and three
positive parameters, $\delta$, $\gamma$, and $\ell$, whose choice determines
their concentration and regularity properties; see \eqref{sceltaconl}. We then define the forcing
term $f$ by substituting $V$ into the left-hand side of
\eqref{NS-intro}, and using the Helmholtz-Weyl decomposition to define the pressure $P$. Thus, $V$ is, by construction, a solution of the
corresponding forced Navier--Stokes problem. By appropriately choosing the
triple $(\delta,\gamma,\ell)$, we can vary the functional class to which
$f$ belongs and, at the same time, determine the resulting regularity and
blow-up behaviour of $V$. This flexibility allows us to identify the
borderline between the regularity of the forcing term and the strength of
the singularities that the corresponding solution may exhibit.
For this argument to work, it is essential that the nonlinear term
$(V\cdot\nabla)V$ arising from this procedure is not ``too singular.'' In
fact, this is precisely what occurs, thanks to a {\em magic cancellation}
that we discuss in Remark~\ref{magic}; see also Remark~\ref{secondmagic}. This cancellation substantially
reduces the contribution of the nonlinear term and, in particular, allows
our construction to yield {\em blow-up results even for the linear Stokes
equations}; see Remark~\ref{blupmanyStokes}  below. Since the cancellation is
a consequence of the divergence-free constraint, this mechanism is specific
to the incompressible setting and, in general, is no longer available for
compressible fluid models or for certain classes of semilinear parabolic
systems.
\par
Our main results fall into five fundamental categories, according to the functional class $\mathcal C$ to which the forcing term $f$ belongs. Irrespective of the choice of $\mathcal C$, the corresponding solutions $(V,P)$ always enjoy the following properties:

\begin{itemize} \item[1.] Their initial datum \(V_0\) belongs to a suitable set of solenoidal functions \(\mathcal G_\sigma\subset C^\infty(\R^3)\); see \eqref{spaceG}. \item[2.] They are global Leray--Hopf solutions and satisfy the energy equality for every \(t>0\). \item[3.] They are unique within the class of Leray--Hopf solutions corresponding to the same data. \item[4.] They satisfy \eqref{1.3}--\eqref{1.5}. \item[5.] For \(t>T\), both the forcing term \(f\) and the corresponding solution become regular and decay uniformly to zero pointwise as \(t\to\infty\). \end{itemize}

The five categories are then distinguished by the choice of $\mathcal C$, with the corresponding solutions exhibiting blow-up in different norms.
\par
Precisely, let
$$\begin{aligned}
&\mathcal C^1_{r,q}:=\bigl\{L^1_{\rm loc}(\R_+;L^2(\R^3))\cap L^r_{\rm loc}(\R_+;L^{6/5}(\R^3))\,, r\in [2,4)\bigr\} \cup
\bigl\{L^q_{\rm loc}(\R_+;L^{2}(\R^3))\,, q\in[\tfrac54,\tfrac43)\bigr\}\,;\\
&\mathcal C^2_{r,q}:=\bigl\{L^2_{\rm loc}(\R_+;L^q(\R^3))\,,\ q\in(1,2)\bigr\}\cup\bigr\{ L^r_{\rm loc}(\R_+;L^2(\R^3))\,,\ r\in [1,2)\bigr\}\,;\\
&\mathcal C^3_{r,q}
:=
\bigcup_{(r,q)\in\mathfrak A_1}
L^r_{\rm loc}\bigl(\R_+;L^q(\R^3)\bigr)\,,\ \
\mathfrak A_1
:=
\left\{
(r,q)\in[1,\infty)\times(1,\infty):
\ q\le 3
\ \text{or}\
\frac{2}{r}+\frac{3}{q}>1
\right\}\,;\\
&\mathcal C^4_{r,q}
:=
\bigcup_{(r,q)\in\mathfrak A_2}
L^r_{\rm loc}\bigl(\R_+;L^q(\R^3)\bigr)\,,\ \ \mathfrak A_2
:=\left\{(r,q)\in[1,\infty)\times(1,\infty):q\leq\frac{3}{2}
\ \text{or}\ \frac{2}{r}+\frac{3}{q}>2\right\}\,;\\
&\mathcal C^5_{r,q}:= \bigcup_{(r,q)\in(1,\infty)^2}L^r_{\rm loc}\bigl(\R_+;L^q(\R^3)\bigr)\,.
\end{aligned}
$$
Then, for any choice of $(r,q)$ for the above classes $\mathcal C^{i}_{r,q}$, $i=1,\ldots 5$, there is $V_0\in\mathcal G_\sigma$ and   $f\in \mathcal C_{r,q}^{i}$ such that the corresponding solution $V$ satisfies the following conditions:
$$
\lim_{t\to T_m}\, \sqrt[4]{|T_m-t|}\, \|\nabla V(t)\|_{L^2(\R^3)}=
\lim_{t\to T_m}\|V(t)\|_{L^p(\R^3)}=\infty\quad\forall m\ge1\,
 \text{ and $\forall p\ge \bar{p}(r)$}\,,\ \text{in $\mathcal C^{1}_{r,q}$}\,,
$$
see Theorems \ref{main2} and \ref{main3}\,;
$$
\lim_{t\to T_m}\, {|T_m-t|^\lambda}\, \|\nabla V(t)\|_{L^2(\R^3)}=\infty\,,\ \lambda=\lambda(q,r)>0\,, \ \text{in $\mathcal C^{2}_{r,q}$}\,,
$$
see Theorem \ref{inflating}\,;
$$
\lim_{t\to T_m}\|V(t)\|_{L^\infty(\R^3)}=\infty\,,\ \text{in $\mathcal C^{3}_{r,q}$}\,,
$$
see Theorem \ref{mainLinfty}\,;
$$
\lim_{t\to T_m}\|\nabla V(t)\|_{L^\infty(\R^3)}=\infty\,,\ \text{in $\mathcal C^{4}_{r,q}$}\,,
$$
see Theorem \ref{mainLinftygrad}. Finally,
$$
\lim_{t\to T_m}\|\Delta V(t)\|_{L^\infty(\R^3)}=\infty\,,\ \text{in $\mathcal C^{5}_{r,q}$}\,;
$$
see Theorem \ref{final}. Moreover, in each of the cases considered above, we establish a corresponding blow-up result for the pressure field $P$ in suitable norms.
\par
As mentioned earlier on, the above classes are ``optimal" for the occurrence of blow-up in the indicated norms. This important property is established in Section \ref{integNS}. To this end, for
$$
f\in L^{s'}_{\rm loc}(\R_+;L^s(\R^3))\,,\ \ (s',s)\in[1,\infty)\times(1,\infty) \,,
$$
we introduce the quantity
$$
\mathcal S_f=\frac2{s'}+\frac3{s}\,.
$$
We then show that, as soon as the integrability properties of $f$, as measured by $\mathcal S_f$,
place $f$ {\em strictly} outside the corresponding class above, blow-up in the associated norm cannot occur.
In this regard, see also Remark \ref{classical}.\par
The proof of Theorem \ref{final}, which yields the blow-up of $\Delta V$,
shows that the singularities of $f$ affect only the second-order spatial
derivatives of $V$, but neither the vorticity $\nabla\wedge V$ nor the
time derivative $V_t$. This observation suggests that a blow-up result
might also hold for the {\em Euler problem}, obtained from
\eqref{NS-intro} by formally suppressing the term $\Delta V$, even for
relatively smooth forcing terms. This is indeed established in Theorem
\ref{eulertheo}, where we show that there exist
$V_0\in\mathcal G_\sigma$ and
\[
f\in C^\infty(\R^3\times[0,T))
   \cap C^{0,\alpha}(\R^3\times[0,T]),
\qquad \alpha\in(0,1),
\]
such that the corresponding Euler problem admits a solution $(V,P)$,
with
\[
V\in C^\infty(\R^3\times[0,T))
   \cap C^{0,\alpha}(\R^3\times[0,T]),
\]
that blows up at time $T$ in the sense of the Beale--Kato--Majda criterion \cite{majda}, namely,
\[
\lim_{t\to T}\int_0^t
\|\nabla\wedge V(s)\|_{L^\infty(\R^3)}\,ds=\infty.
\]

\par
We conclude this outline by noting that the problem of constructing forces leading to blow-up in different norms has also been investigated very recently by Zhang \cite{zhang}, whose results were subsequently extended and generalised by Beir\~ao da Veiga--Yang \cite{Hugo}. However, their approach and results are of a different nature from those presented here.
\par
The paper is organised as follows. In Section \ref{results}, we give a precise formulation of our main results. In Section \ref{integNS}, we employ well-known results and some of their consequences---developed in the Appendix, Section~\ref{proofsgigasohr}---to show that our blow-up findings are sharp in the sense specified previously. The backbone of the paper is presented in Section \ref{reconstruction}. There, inspired by the ``bubble'' solution of the Lane--Emden equation, we introduce a solenoidal vector field $V=V(\xi,t)$ depending on a given $T>0$ and on three parameters, $\delta$, $\gamma$, and $\ell$. We first identify the range of these parameters for which $V$ is a Leray--Hopf solution; see Lemma \ref{existencecond}. We then further restrict this range in order to guarantee that $V$ is unique in the class of Leray--Hopf solutions and satisfies the energy equality; see Lemma \ref{allterms}. Next, we characterise the range of parameters for which $V$ exhibits blow-up in different norms (Lemma \ref{wekandblup}) and then, in Lemma \ref{defforce2}, determine the summability properties of
$g:=V_t+(V\cdot\nabla)V-\Delta V$. As previously mentioned, a fundamental role in this analysis is played by the {\em magic cancellation} resulting from the solenoidality of $V$; see Lemma \ref{defforce2} and Remark \ref{magic}. The section ends with the introduction of the pressure field via Helmholtz-Weyl decomposition of the vector field $g$, and the study of its corresponding summability properties. The subsequent  Section \ref{proofs} is entirely dedicated to the proofs of our main results. They are essentially developed in two steps. On the one hand, thanks to the results established in the previous section, we prove the existence of a unique, global Leray--Hopf solution satisfying the energy equality and possessing the stated singularity properties at a single time $T>0$. On the other hand, we define a vector field $W$ as a suitably weighted series of solutions ${V_m}$, each of which possesses a singularity at a prescribed time $T_m$, and show that $W$ is a Leray--Hopf solution to \eqref{NS-intro} possessing all the properties stated in the main theorems. The final section contains an Appendix divided into two parts. In the first part, Section \ref{integLE}, we recall the stationary and evolution Lane--Emden equation and some relevant results concerning its solutions, including their uniqueness properties. In the second part, Section \ref{proofsgigasohr}, we employ the results of \cite{giga} to prove local existence theorems for \eqref{NS-intro} in maximal regularity classes, which are needed to establish the sharpness of our blow-up conditions.

\section{Main results}\label{results}

For $p\ge1$, let $L^p_\sigma(\R^3)$ be the subspace of $L^p(\R^3)$ of divergence-free vector fields. For any $T>0$, let
$$
\mathcal{D}_T=\Big\{\phi\in C^\infty_c\big(\R^3\times[0,T)\big)\mbox{ s.t. }\nabla\cdot\phi=0\mbox{ in }\R^3\times[0,T)\Big\}
$$
and we make precise what is meant by Leray-Hopf solution.

\begin{definition}\label{LHsolutions}
For given $V_0\in L^2_\sigma(\R^3)$, given $T>0$ and $f\in L^2(0,T;H^{-1}(\R^3))+L^1(0,T;L^2(\R^3))$, a vector field $V=V(\xi,t)$ is a
Leray-Hopf solution to \eqref{NS-intro} in $\R^3\times(0,T)$ if
\neweq{functionalcond}
V\in L^\infty(0,T;L^2_\sigma(\R^3))\cap L^2(0,T;H^1(\R^3))\, ,
\endeq
\neweq{weakform}
\int_{0}^{\infty}\left\{\int_{\R^3}\big[\nabla V:\nabla\phi+(V\cdot\nabla)V\cdot\phi-V\cdot\phi_t\big]\right\}\, dt=
\int_{0}^{\infty}\langle f,\phi\rangle dt+\int_{\R^3}V_0\cdot\phi(0)\qquad\forall\phi\in\mathcal{D}_T\, ,
\endeq
\neweq{energyineq}
\|V(t)\|_{L^2(\R^3)}^2+2\int_{0}^{t}\|\nabla V(s)\|_{L^2(\R^3)}^2ds\le\|V_0\|_{L^2(\R^3)}^2+2\int_{0}^{t}\langle f(s),V(s)\rangle ds
\qquad\forall t\in(0,T]\, .
\endeq
A vector field is a global Leray-Hopf solution to \eqref{NS-intro} if it is a Leray-Hopf solution for any $T>0$.
\end{definition}

Condition \eqref{energyineq} is called the energy inequality, and $\langle\cdot,\cdot\rangle$ denotes the duality between $H^{-1}(\R^3)$ and $H^1(\R^3)$. Note that, at those times $t$ for which \eqref{energyineq} holds with strict inequality, there is a loss of kinetic energy that cannot be attributed to viscous dissipation.
\par
In order to recover the pressure, the minimal condition is $P\in L^1_{\rm loc}(\R^3\times(0,T))$ and one is naturally
led to solve (in distributional sense)
\neweq{usualpressure}
-\Delta P=\nabla\cdot\big[(V\cdot\nabla)V-f\big]\qquad\mbox{in }\R^3\quad\mbox{for a.e. }t\in(0,T)\, .
\endeq

Throughout the paper, the initial velocity $V_0$ in \eq{NS-intro}$_3$ will be taken in the following space of smooth functions
\neweq{spaceG}
\G_\sigma=\{V\in\G;\ \nabla\cdot V=0\mbox{ in }\R^3\}\quad\mbox{where}\quad
\G\, =\, \bigcap_{k\in\mathbb{N}}\,\bigcap_{q\in[2,\infty]}\,W^{k,q}(\R^3)\ \subset C^\infty(\R^3)\, .
\endeq

Our first statement reads.

\begin{theorem}\label{main2}
Let $\G_\sigma$ be as in \eqref{spaceG}. For any $T>0$ and any $2\le r<4$ there exist
$$V_0\in\G_\sigma\, ,\qquad f\in L^1_{\rm loc}(\R_+;L^2(\R^3))\cap L^r_{\rm loc}(\R_+;L^{6/5}(\R^3))\subset L^2_{\rm loc}(\R_+;H^{-1}(\R^3))\, ,$$
such that $T\mapsto\|\nabla V_0\|_{L^2(\R^3)}$ is strictly decreasing and:\par\noindent
$(\rm{a})$ problem \eqref{NS-intro} admits a global Leray-Hopf solution $V\in C^0(\R_+;L^2(\R^3))$ which satisfies
the energy equality, \eqref{energyineq} with equality sign, for all $t>0$;\par\noindent
$(\rm{b})$ there exists $\overline{q}>3$ such that $V\in L^{2q/(q-3)}_{\rm loc}(\R_+;L^q(\R^3))$ for all $q>\overline{q}$ and, hence,
such solution is unique;\par\noindent
$(\rm{c})$ there exist a strictly increasing sequence $$\{T_m\}\subset (0,T)$$ and a sequence
$\{\xi_m\}\subset\R^3$ such that the solution satisfies $V,P\in C^\infty(\R^3\times\R_+\setminus\cup_{m=1}^\infty\{(\xi_m,T_m)\})$;\par\noindent
$(\rm{d})$ such solution also satisfies $V\not\in L^8_{\rm loc}(\R_+;L^4(\R^3))$, $\nabla V\not\in L^4_{\rm loc}(\R_+;L^2(\R^3))$ and
\neweq{blupofL3}
\lim_{t\to T_m}\, \sqrt[4]{|T_m-t|}\, \|\nabla V(t)\|_{L^2(\R^3)}=
\lim_{t\to T_m}\|V(t)\|_{L^p(\R^3)}=\infty\quad\forall m\ge1\, ,\quad\forall p>\frac{18}5 -\frac{12}{5r}\, ,
\endeq
while the associated pressure $P$ satisfies $P\in L^2_{\rm loc}(\R_+;L^p(\R^3))$ for all $p\ge1$,
$\nabla P\in L^r_{\rm loc}(\R_+;L^{6/5}(\R^3))$ and
$$\exists\overline{s}>r\quad\mbox{such that}\quad\lim_{t\to T_m}\|\nabla P\|_{L^s(0,t;L^{6/5}(\R^3))}=\infty
\quad\forall s>\overline{s}\quad\forall m\ge1\, .$$
\end{theorem}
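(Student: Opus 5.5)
We plan an inverse construction: we build the velocity field explicitly and then read off the force from the equation. The starting point is the Lane--Emden bubble $U_\mu(\xi)=(3\mu)^{1/4}(\mu+|\xi|^2)^{-1/2}$ with the concentration parameter driven to zero, $\mu=\mu(t)=(T-t)^\gamma$ for $t<T$. From this profile we form a solenoidal field of the type
$$
V(\xi,t)=A(t)\,\nabla\wedge\big(\phi(\xi)\,U_{\mu(t)}(\xi)\,e\big),\qquad A(t)=(T-t)^{\delta},
$$
or a variant in which $e$ is a fixed direction and $\phi$ is a cutoff/decay weight carrying the parameter $\ell$. For $t\ge T$ we continue $V$ smoothly, for instance by freezing a regular profile multiplied by a decaying factor, so that item 5 of the introduction holds.

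The force is then defined by $g:=V_t-\Delta V+(V\cdot\nabla)V$. The pressure comes from the Helmholtz--Weyl decomposition $g=f+\nabla P$, which gives $-\Delta P=\nabla\cdot[(V\cdot\nabla)V]$ with the linear terms being solenoidal. By construction $(V,P)$ solves \eqref{NS-intro}. Because $V$ is explicit, we can compute $\|V(t)\|_{L^p}$, $\|\nabla V(t)\|_{L^2}$, $\|V_t\|_{L^q}$ and $\|\Delta V\|_{L^q}$ by scaling in $\mu$. Each quantity behaves like $(T-t)^{\delta+\gamma\kappa}$ with an explicit exponent $\kappa$. The conditions ``$f\in L^1_t L^2_x\cap L^r_tL^{6/5}_x$'', ``$V\in L^\infty L^2\cap L^2H^1$'' (so that $V$ is Leray--Hopf), ``$V\in L^{2q/(q-3)}L^q$ for large $q$'' (Serrin/Prodi class, giving uniqueness and the energy equality by the classical weak--strong argument) and ``$\sqrt[4]{T-t}\|\nabla V\|_{L^2}\to\infty$, $\|V\|_{L^p}\to\infty$ for $p>\frac{18}{5}-\frac{12}{5r}$'' then become linear inequalities in $(\delta,\gamma)$. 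We must check that this system is nonempty for each $r\in[2,4)$ and choose a point in it.

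The key difficulty is the nonlinear term. Naively $(V\cdot\nabla)V$ is more singular than $V_t$ and $\Delta V$ and would spoil $f\in L^r L^{6/5}$. We will exploit solenoidality: writing $(V\cdot\nabla)V=\nabla\frac{|V|^2}{2}-V\wedge(\nabla\wedge V)$, the gradient part goes into $\nabla P$ rather than $f$. For our near-radial curl structure, the remaining part $V\wedge\omega$ is expected to be much less singular; this is the magic cancellation. We also need bounds on $\nabla P$ via Calder\'on--Zygmund estimates on $\nabla\frac{|V|^2}{2}$ and the Riesz transforms. We expect the main obstacle to be the careful estimation of this term and its Leray projection in $L^{6/5}$, together with the fact that $f$ itself is the projected part of $g$. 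We would first try to show that $\mathbb P[V\wedge\omega]$ is bounded in $L^{6/5}$ by $\|V\wedge\omega\|_{L^{6/5}}$, which follows by scaling.

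Finally, to obtain countably many blow-ups, we take $T_m\uparrow T$ and translates $\xi_m$ with $|\xi_m-\xi_k|$ large, or with $\xi_m$ constant but using the time separation. We set $W=\sum_m c_m V_m(\cdot-\xi_m,t)$, where each $V_m$ is built with blow-up time $T_m$ and the weights $c_m$ decay fast enough that all the norms above converge. Cross nonlinear terms $(V_m\cdot\nabla)V_k$ for $m\ne k$ are smooth near each singular point and are absorbed into the force; local smoothness away from $(\xi_m,T_m)$ gives (c). Blow-up at each $T_m$ is unaffected because the remaining terms are bounded near $T_m$. We would check the monotonicity of $T\mapsto\|\nabla V_0\|_{L^2}$ directly from the explicit formula at $t=0$, since $\mu(0)=T^\gamma$. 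The statements $V\notin L^8L^4$ and $\nabla V\notin L^4L^2$ follow from the blow-up rates, and the divergence of $\|\nabla P\|_{L^s(0,t;L^{6/5})}$ for $s>\bar s$ follows from the exact scaling of $\nabla|V|^2$.
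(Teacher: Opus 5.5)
Your architecture matches the paper's. You use an explicit two-exponent ansatz, read $f$ off from the equation, solve $-\Delta P=\nabla\cdot[(V\cdot\nabla)V]$, turn every requirement into a linear inequality in the exponents, and sum a weighted series $\sum_m 2^{-m}V_m$ for countably many blow-up times. But the substance of Theorem~\ref{main2} is the verification that these inequalities are compatible, with the thresholds $r<4$ and $p>\frac{18}{5}-\frac{12}{5r}$. You defer exactly this step (``we must check that this system is nonempty''), and for the one concrete ansatz you propose the system is in fact empty.

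\emph{Why your ansatz fails.} Your field is $V=(T-t)^{\delta}\nabla\wedge(U_\mu e)=c\,(T-t)^{\delta+\gamma/4}(\xi\wedge e)(\mu+|\xi|^2)^{-3/2}$. It decays only like $|\xi|^{-2}$, so without a cutoff $V_t\notin L^{6/5}(\mathbb{R}^3)$. With a fixed cutoff, on the annulus $R\sqrt{\mu}<|\xi|<\frac12$ one has $V\approx a(t)(\xi\wedge e)|\xi|^{-3}$ with $a=(T-t)^{\delta+\gamma/4}$. There $V_t$ dominates the other terms of $f$, so $\|f(t)\|_{L^{6/5}}\gtrsim|a'(t)|$, and $f\in L^r(0,T;L^{6/5})$ forces $\delta+\frac\gamma4>1-\frac1r$. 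On the other hand $\|V(t)\|_{L^q}\sim(T-t)^{\delta-\frac{3\gamma}{4}+\frac{3\gamma}{2q}}$ for $q>\frac32$. Hence blow-up of $\|V(t)\|_{L^3}$ (required in (d), since $\frac{18}{5}-\frac{12}{5r}<3$) needs $\delta<\frac\gamma4$, and (b) for all large $q$ needs $\delta-\frac{3\gamma}{4}\ge-\frac12$. The last two conditions give $\gamma<1$, while the first two give $\gamma>2-\frac2r\ge1$. This is a contradiction for every $r\ge2$. In the degenerate case $\delta=-\frac\gamma4$, where $a'\equiv0$, the self-similar part of $V_t$ forces $\gamma>4-\frac4r$, again incompatible.

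So unless $\phi$ itself concentrates at scale $\sqrt\mu$ (that is, unless you change the profile, which you leave unspecified), the plan fails. The missing idea is that the profile must decay faster than $|\xi|^{-5/2}$, so that every norm entering $f\in L^rL^{6/5}$ is exactly self-similar. The paper uses $V=(T-t)^{2\delta}(2yz,-xz,-xy)[(T-t)^{2\gamma}+|\xi|^2]^{-5/2}$, modelled on $\Delta w_\varepsilon\propto w_\varepsilon^5$ rather than on $\nabla w_\varepsilon$, which decays like $|\xi|^{-3}$. In the paper's exponents the binding constraints are then $\frac{8\delta+1}{10}<\gamma<\min\{\frac{4\delta+1}{6},2(2\delta-1+\frac1r)\}$. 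This is a triangle $A_r$, nonempty exactly for $r<4$, and choosing $(\delta,\gamma)$ near its vertex $(\frac{13}{20}-\frac{3}{5r},\frac35-\frac{2}{5r})$ yields blow-up of $\|V(t)\|_{L^p}$ for $p>\frac{18}{5}-\frac{12}{5r}$.

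\emph{Two smaller points.} First, the nonlinear term is not the obstacle. For a self-similar field it scales like every other term, and in the paper it only yields $\gamma<\frac29(4\delta+\frac1r)$, which is never active in $A_r$. Bounding $\mathbb{P}[(V\cdot\nabla)V]$ by $\|(V\cdot\nabla)V\|_{L^{6/5}}$ suffices. The gain you expect from the Lamb splitting does not exist: for $V=h(|\xi|)(y,-x,0)$ one finds $V\wedge\omega=2h^2(x,y,0)+\frac{hh'}{|\xi|}(x^2+y^2)\xi$, which has exactly the scaling of $(V\cdot\nabla)V=-h^2(x,y,0)$.

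Second, (c) does not follow from ``local smoothness away from $(\xi_m,T_m)$''. At fixed $\xi\neq0$ your field behaves like $c(\xi)(T-t)^{\delta+\gamma/4}$ (plus powers of $\mu$), which is not smooth across $t=T$ for non-integer exponents. The paper's extension via $|T-t|$ is open to the same objection: there $V(\xi,t)\sim|T-t|^{2\delta}$ with $2\delta<1$ at fixed $\xi\neq0$, so $V$ is not even $C^1$ across $t=T$. This is not a point where you fall behind the paper, but neither argument proves (c) as stated.
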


We obtain the same statement for $f\in L^{r}_{\rm loc}(\R_+;L^2(\R^3))$.

\begin{theorem}\label{main3}
Let $\G_\sigma$ be as in \eqref{spaceG}. For any $T>0$ and $5/4\le r<4/3$ there exist
$$V_0\in\G_\sigma\, ,\qquad f\in L^r_{\rm loc}(\R_+;L^2(\R^3))\subset L^1_{\rm loc}(\R_+;L^2(\R^3))\, ,$$
such that $T\mapsto\|\nabla V_0\|_{L^2(\R^3)}$ is strictly decreasing and:\par\noindent
-- $(\rm{a})$-$(\rm{b})$-$(\rm{c})$ in Theorem \ref{main2} hold;\par\noindent
-- such solution also satisfies $V\not\in L^8(0,T;L^4(\R^3))$, $\nabla V\not\in L^4(0,T;L^2(\R^3))$ and
\neweq{blupofL3n2}
\lim_{t\to T_m}\, \sqrt[4]{|T_m-t|}\, \|\nabla V(t)\|_{L^2(\R^3)}=
\lim_{t\to T_m}\|V(t)\|_{L^p(\R^3)}=\infty\quad\forall m\ge1\, ,\quad\forall p>6-\frac{4}{r} \, ,
\endeq
while the associated pressure $P$ satisfies $P\in L^2_{\rm loc}(\R_+;L^p(\R^3))$ for all $p\ge1$, $\nabla P\in L^r_{\rm loc}(\R_+;L^2(\R^3))$ and
$$\exists\overline{s}>r\quad\mbox{such that}\quad\lim_{t\to T_m}\|\nabla P\|_{L^s(0,t;L^2(\R^3))}=\infty
\quad\forall s>\overline{s}\quad\forall m\ge1\, .$$
\end{theorem}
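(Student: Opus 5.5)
The plan is to mirror the proof of Theorem \ref{main2}, changing only the admissible range of the parameters $(\delta,\gamma,\ell)$ in the Lane--Emden type ansatz. Concretely, I would take the single-bubble field $V=V(\xi,t)$ built from the stationary bubble $(1+|\xi|^2/\eps^2)^{-\delta}$ with concentration scale $\eps=\eps(t)\sim (T-t)^{\gamma}$ and amplitude modulated by $\ell$. This field is made solenoidal by taking it of the form $\nabla\wedge(\phi\, e)$ or an analogous rotational profile. I define $g:=V_t-\Delta V+(V\cdot\nabla)V$ and split $g=f+\nabla P$ via the Helmholtz--Weyl decomposition. By scaling, the $L^2$ norm of each piece of $g$ behaves like a power $(T-t)^{-\kappa(\delta,\gamma,\ell)}$. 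The same holds for $\|\nabla V\|_{L^2}$ and $\|V\|_{L^p}$.

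First I would invoke Lemma \ref{existencecond} and Lemma \ref{allterms}. These fix the parameter region where $V$ is a Leray--Hopf solution, lies in $L^{2q/(q-3)}(L^q)$ for large $q$ (Serrin class, hence uniqueness and energy equality), and is smooth away from $(0,T)$. Next I would use Lemma \ref{defforce2}. Thanks to the magic cancellation of Remark \ref{magic}, the nonlinear term is no worse than $V_t$ and $\Delta V$, so $\|g(t)\|_{L^2}\lesssim (T-t)^{-\kappa}$. I then impose $\kappa r<1$, which gives $f,\nabla P\in L^r(0,T;L^2)$. In parallel, Lemma \ref{wekandblup} requires $\|\nabla V(t)\|_{L^2}\gtrsim (T-t)^{-\beta}$ with $\beta>1/4$, and $\|V(t)\|_{L^p}\to\infty$ for $p>6-4/r$.

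The main task is to check that these constraints are compatible exactly when $5/4\le r<4/3$. Heuristically, in $L^2$ the leading part of $g$ is $\Delta V$, so $\|g\|_{L^2}\sim\|\nabla V\|_{L^2}\,\eps^{-1}$. With $\|\nabla V\|_{L^2}\sim(T-t)^{-1/4-}$ and $\eps$ chosen so that $\kappa r<1$, one needs $r<4/3$; this is the threshold, and in the limit $r\to 4/3$ one gets $L^{4/3}(L^2)$, which is where $L^4(H^1)$ regularity would follow. The lower bound $r\ge 5/4$ should come from needing the $L^p$ exponent threshold $6-4/r\ge 14/5$ to stay consistent with the Leray--Hopf constraints. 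I would solve the resulting linear inequalities in $(\delta,\gamma,\ell)$, producing an explicit choice analogous to \eqref{sceltaconl}. The unboundedness of $\|\nabla P\|_{L^s(0,t;L^2)}$ for $s>\overline s:=1/\kappa$ then follows from a matching lower bound $\|\nabla P(t)\|_{L^2}\gtrsim (T-t)^{-\kappa}$. This lower bound comes from the explicit gradient part of $g$. The bound $P\in L^2(L^p)$ for all $p$ is obtained as in Theorem \ref{main2}.

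Finally I would pass from one blow-up time to countably many, exactly as in the proof of Theorem \ref{main2}. I take $W=\sum_m c_m V_m(\cdot-\xi_m,\cdot)$ with $V_m$ blowing up at $T_m\uparrow T$, and choose rapidly decaying weights $c_m$ together with sufficiently separated centres $\xi_m$ (or nested scales). This makes the series converge in every norm used above and makes the cross terms of the nonlinearity summable in $L^r(L^2)$. The force is then $f=\sum c_m f_m$ plus the solenoidal part of the cross terms. The monotonicity of $T\mapsto\|\nabla V_0\|_{L^2}$ is read off from the explicit $t=0$ profile, as before.

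I expect the main obstacle to be the parameter bookkeeping in the second step. One must verify that a single triple satisfies uniqueness (a Serrin-class exponent condition), $f\in L^r(L^2)$, and the $(T-t)^{1/4}$ blow-up at the same time. The delicate point is the lower endpoint $r=5/4$: I would first try to saturate the uniqueness condition and check whether the inequalities still close there.
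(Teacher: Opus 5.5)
Your architecture is the paper's: the field \eqref{scelta}, $f$ taken as the solenoidal part of $g$, and a weighted series for countably many blow-up times. Note that the lemmas you cite are proved for that specific field, with $2\delta$ the amplitude exponent, $\gamma$ the concentration exponent and $\ell=5$, so your relabelled profile should simply be replaced by it. The gap is that the only nontrivial step, showing the constraints are compatible for $r<\frac43$, is not carried out, and your heuristic for it rests on the wrong term.

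The rate condition $10\gamma>8\delta+1$ and the Serrin condition $6\gamma<4\delta+1$ together force $\delta<\gamma<\frac12$. In that range $\|V_t(t)\|_{L^2}\sim(T-t)^{2\delta-1-3\gamma/2}$ dominates $\|\Delta V(t)\|_{L^2}\sim(T-t)^{2\delta-7\gamma/2}$ by the factor $(T-t)^{2\gamma-1}$. Your bound $\|\Delta V\|_{L^2}\sim\|\nabla V\|_{L^2}\,\eps^{-1}\sim(T-t)^{-1/4-\gamma-}$ gives $r<\frac43$ only at the parabolic point $\gamma=\frac12$, which is excluded. For $\gamma<\frac12$ the $\Delta V$ and $(V\cdot\nabla)V$ conditions alone would admit some $r>\frac43$.

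The threshold comes from the time derivative. The rate condition gives $\|V_t(t)\|_{L^2}\gtrsim(T-t)^{\gamma-5/4}$ with $\gamma-\frac54<-\frac34$. The three conditions
\[
\frac{8\delta+1}{10}<\gamma<\min\Big\{\frac{4\delta+1}{6},\ \frac23\Big(2\delta-1+\frac1r\Big)\Big\}
\]
(rate, Serrin, $V_t\in L^r(0,T;L^2(\R^3))$) cut out the triangle $B_r$ with vertices $(\frac{23}{16}-\frac{5}{4r},\frac54-\frac1r)$, $(\frac54-\frac1r,1-\frac{2}{3r})$, $(\frac12,\frac12)$. It is nonempty precisely for $r<\frac43$, and on it the conditions $\gamma<\frac27(2\delta+\frac1r)$ and $\gamma<\frac{2}{11}(4\delta+\frac1r)$ from \eqref{sumg22} are slack.

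You also never say where $6-\frac4r$ comes from. By \eqref{VLqq}, $\|V(t)\|_{L^p}\to\infty$ iff $p>3\gamma/(3\gamma-2\delta)$. This equals $6-\frac4r$ exactly at the vertex $(\frac54-\frac1r,1-\frac{2}{3r})$, not at the vertex of $A_r$ quoted in the paper's proof. So for a prescribed $p>6-\frac4r$ one takes $(\delta,\gamma)\in B_r$ close to it.

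Nothing is delicate at $r=\frac54$. $B_r$ grows as $r$ decreases, and $r\ge\frac54$ only ensures $f\in L^{5/4}_{\rm loc}(\R_+;L^2(\R^3))$ so that Proposition \ref{summary} applies. The critical endpoint is $r\uparrow\frac43$, where $B_r$ shrinks to $(\frac12,\frac12)$.

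Three further steps fail as written.

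(i) Since $V_t$ and $\Delta V$ are solenoidal, $\nabla P$ is the gradient part of $(V\cdot\nabla)V$ alone. By scaling, $\|\nabla P(t)\|_{L^2}=c\,(T-t)^{4\delta-11\gamma/2}$, which is strictly slower than the rate of $g$ inside $B_r$ (the exponents differ by $1+2\delta-4\gamma>0$). So $\|\nabla P(t)\|_{L^2}\gtrsim(T-t)^{-\kappa}$ is false. The right choice is $\overline s=(\frac{11\gamma}{2}-4\delta)^{-1}$, which exceeds $r$ because the $(V\cdot\nabla)V$ condition is slack.

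(ii) $P\in L^2(L^p)$ for all $p$ cannot be taken over from Theorem \ref{main2}. The same scaling gives $\|P(t)\|_{L^3}=c\,\|\nabla V(t)\|_{L^2}^2$ with $c>0$; the pressure is nontrivial because $\nabla\cdot[(V\cdot\nabla)V]\neq0$. Hence $P\in L^2(0,T;L^3(\R^3))$ would give $\nabla V\in L^4(0,T;L^2(\R^3))$, which the theorem excludes. What $V\in L^2(0,T;L^{2p}(\R^3))$ actually yields is $P\in L^1(0,T;L^p(\R^3))$ for $1<p<\infty$; \eqref{regP53} contains the same slip.

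(iii) Smoothness away from $(0,T)$ also fails. For fixed $\xi$ with $yz\neq0$, $v^1(\xi,t)=|T-t|^{2\delta}\big(2yz|\xi|^{-5}+O(|T-t|^{2\gamma})\big)$ with $2\delta<1$, so $V$ is not even $C^1$ near such points of $\{t=T\}$. The construction only gives smoothness for $t\notin\overline{\{T_m\}}$, and the last line of the proof of Theorem \ref{main} has the same defect.

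The passage to countably many blow-up times is otherwise fine, and separated centres are not needed. H\"older's inequality with uniform-in-$m$ bounds on $\|V_i\|_{L^a(L^p)}\|\nabla V_j\|_{L^b(L^s)}$ is available because $\gamma<\frac{2}{11}(4\delta+\frac1r)$ is strict, and with geometric weights it controls all cross terms.
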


We then weaken the blow-up requests \eq{blupofL3} and \eq{blupofL3n2}, going towards the criticality condition for the external force $f$. But,
even to reach the ``weaker blow-up condition'' of the enstrophy (with no rate),
we do not cover the case where $f\in L^2_{\rm loc}(0,T;L^2(\R^3))$.

\begin{theorem}\label{inflating}
Let $\G_\sigma$ be as in \eqref{spaceG}. For any $T>0$, any $q\in(1,2)$ \big(resp.\ $r\in[1,2)$\big) there exist
\neweq{f1f2}
V_0\in\G_\sigma\, ,\qquad f\in L^2_{\rm loc}(\R_+;L^q(\R^3))\quad\Big(\mbox{resp. }f\in L^r_{\rm loc}(\R_+;L^2(\R^3))\Big)
\endeq
such that $T\mapsto\|\nabla V_0\|_{L^2(\R^3)}$ is strictly decreasing and:\par\noindent
-- $(\rm{a})$-$(\rm{b})$-$(\rm{c})$ in Theorem \ref{main2} hold;\par\noindent
-- such solution also satisfies
$$
\lim_{t\to T_m}\|\nabla V(t)\|_{L^2(\R^3)}=\infty\quad\forall m\ge1\quad\mbox{with}
$$
\neweq{quantitative}
\forall q\ge\frac{3}{2}\qquad\lim_{t\to T_m}|T_m-t|^\lambda
\|\nabla V(t)\|_{L^2(\R^3)}=\infty\quad\forall\lambda<\frac{3(2-q)}{4q}
\endeq
\neweq{quantitative2}
\bigg(\mbox{resp. }\forall r\ge\frac{4}{3}\qquad\lim_{t\to T_m}|T_m-t|^\lambda
\|\nabla V(t)\|_{L^2(\R^3)}=\infty\quad\forall\lambda<\frac{2-r}{2r}\bigg)\, ,
\endeq
while the associated pressure satisfies $P\in L^2_{\rm loc}(\R_+;L^p(\R^3))$ for all $p\ge1$,
$\nabla P\in L^2_{\rm loc}(\R_+;L^q(\R^3))$ \big(resp.\ $\nabla P\in L^r_{\rm loc}(\R_+;L^2(\R^3))$\big) with some norms of $\nabla P$
diverging to $\infty$ as $t\to T_m$.
\end{theorem}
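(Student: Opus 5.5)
The plan is to follow the two-step scheme announced in the introduction: first build, for a single prescribed time $T$, a solenoidal field concentrating at one point, and then superpose countably many such fields with blow-up times $T_m\uparrow$. For the single-bubble step we take the Lane--Emden-inspired field $V=V_{\delta,\gamma,\ell}(\xi,t)$ of Section \ref{reconstruction}. Its profile scales like $\eps(t)^{-a}U(\xi/\eps(t))$ with $\eps(t)\sim |T-t|^{\gamma}$ near $T$, and it is cut off smoothly for $t>T$ so that property 5 holds. We then define $f:=V_t-\Delta V+(V\cdot\nabla)V+\nabla P$, where $P$ comes from the Helmholtz--Weyl decomposition of $g=V_t+(V\cdot\nabla)V-\Delta V$. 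With this definition \eqref{weakform} holds by construction. We then use Lemma \ref{existencecond} to get $V$ Leray--Hopf, and Lemma \ref{allterms} to get the Serrin-type integrability $V\in L^{2q/(q-3)}_{\rm loc}(\R_+;L^q)$ for large $q$. This integrability yields uniqueness in the Leray--Hopf class and the energy equality, i.e. (a)--(b). Smoothness away from $(\xi_1,T)$, item (c), is immediate from the explicit formula.

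The core is choosing $(\delta,\gamma,\ell)$. By scaling, $\|\nabla V(t)\|_{L^2}\sim \eps(t)^{-\beta}$ with $\beta$ depending on the parameters. Lemma \ref{wekandblup} gives the range in which $\|\nabla V(t)\|_{L^2}\to\infty$ and, more precisely, $|T-t|^{\lambda}\|\nabla V(t)\|_{L^2}\to\infty$ for all $\lambda<\gamma\beta$. Lemma \ref{defforce2} gives the summability of $g$. The dominant terms are $V_t$ and $\Delta V$, since the magic cancellation of Remark \ref{magic} makes $(V\cdot\nabla)V$ essentially a gradient or of lower order. From the scaling these terms behave like $\|g(t)\|_{L^q}\sim |T-t|^{-\mu(q)}$. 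Hence $f\in L^2_{\rm loc}(\R_+;L^q)$ iff $2\mu(q)<1$, and $f\in L^r_{\rm loc}(\R_+;L^2)$ iff $r\mu(2)<1$. Optimizing the exponent relation between $\mu$ and $\gamma\beta$ under these constraints should produce precisely the thresholds $\lambda<\frac{3(2-q)}{4q}$ (for $q\ge 3/2$) and $\lambda<\frac{2-r}{2r}$ (for $r\ge4/3$). For $q<3/2$ (resp. $r<4/3$) we only claim enstrophy blow-up, which holds since the relevant parameter window is nonempty. For the pressure we use $\nabla P=\mathbb{P}^\perp g$ and Calderón--Zygmund boundedness ($1<q<\infty$) to transfer the $L^2_tL^q_x$ (resp. $L^r_tL^2_x$) bound. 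The lower-order integrability $P\in L^2_tL^p_x$ comes from $-\Delta P=\nabla\cdot\nabla\cdot(V\otimes V)$ plus the smooth, well-localized structure. Divergence of some norms of $\nabla P$ follows because $\nabla P$ carries the same scaling as $g$.

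For the countably many singularities we take $T_m\uparrow T$, points $\xi_m$ (well separated or equal), and set $W=\sum_m c_m V_m(\cdot-\xi_m,\cdot)$ with $V_m$ built on horizon $T_m$. The weights $c_m$ decay fast enough that all norms in (a), (b) and \eqref{f1f2} converge, and the forcing is defined by plugging $W$ into the equation. The cross nonlinear terms $(V_m\cdot\nabla)V_k$ must be controlled. We will absorb them into $f$ using the Serrin bound of each $V_k$ together with the smoothness of $V_k$ near $T_m$ for $k\neq m$; near each $T_m$ only one term is singular and the rest are smooth. Uniqueness and the energy equality again follow from $W\in L^{2q/(q-3)}_tL^q_x$. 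The claim that $T\mapsto\|\nabla V_0\|_{L^2}$ is strictly decreasing comes from $V_0=V(\cdot,0)$ with $\eps(0)\sim T^\gamma$ increasing in $T$.

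The main obstacle is the exponent bookkeeping in Lemma \ref{defforce2}. We need a single parameter triple that simultaneously keeps $f$ in the borderline class and $V$ in the Serrin class, while yielding the sharp rate $\lambda$. This works only if the convective term is genuinely subordinate, which is where the cancellation must be used carefully.
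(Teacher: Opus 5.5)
Your architecture matches the paper's: the explicit field \eqref{scelta}, $f:=g+\nabla P$, Serrin-class uniqueness via Lemma \ref{allterms}, and a weighted series over the $T_m$. Bounding $\nabla P$ through the $L^q$-boundedness of the Helmholtz projection is also fine. The gap is that the part of Theorem \ref{inflating} not already contained in Theorems \ref{main2}--\ref{main3} is only asserted (``optimizing \dots should produce precisely the thresholds''). That part is the rates \eqref{quantitative}--\eqref{quantitative2} and the role of $q\ge\frac32$, $r\ge\frac43$, and the route you sketch would not reach them.

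First, Lemma \ref{defforce2} is the wrong tool. It treats only $L^2L^{6/5}$ and $L^{5/4}L^2$, and it assumes \eqref{capital}, which contains $\delta<\gamma$, i.e.\ blow-up of $\|V(t)\|_{L^3}$. For $q>\frac32$ the optimal parameters satisfy $\delta>\gamma$. So you must drop the $L^3$ requirement and work in the inflated region $D$ of \eqref{capital2}, namely $3\gamma<4\delta<5\gamma$ and $6\gamma<4\delta+1$, using the general criterion \eqref{sumg22}.

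Second, the convective term is not subordinate for structural reasons. The cancellation of Remark \ref{magic} only lowers the exponent of the denominator. Also, $(V\cdot\nabla)V$ is not a gradient: its solenoidal part $F$ in \eqref{regnablaP} is nonzero and enters $f$. With $r=2$, \eqref{sumg22} yields three half-planes:
\begin{itemize}
\item from $\Delta V$: $\gamma<\frac{q}{5q-3}(2\delta+\frac12)$;
\item from $(V\cdot\nabla)V$: $\gamma<\frac{q}{7q-3}(4\delta+\frac12)$;
\item from $V_t$: $\gamma<\frac{q}{3q-3}(2\delta-\frac12)$.
\end{itemize}
You must prove that the convective constraint and $6\gamma<4\delta+1$ are redundant. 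This holds for $q\ge\frac32$, cf.\ \eqref{claim01}, but fails for $q$ near $1$: at $q=\frac65$ both constraints cut off the relevant vertex. The same check for $q=2$ is where $r\ge\frac43$ enters, cf.\ \eqref{monstre1}--\eqref{monstre2}.

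Only after this is the admissible set the open triangle bounded by $5\gamma=4\delta$ and the $\Delta V$- and $V_t$-lines. By \eqref{nablaV2}, $\|\nabla V(t)\|_{L^2}=C|T-t|^{2\delta-5\gamma/2}$, so you maximize the linear function $\frac52\gamma-2\delta$ over this triangle. The supremum sits at the vertex where the $\Delta V$- and $V_t$-lines meet, $(1-\frac{3}{4q},\frac12)$ (resp.\ $(\frac78-\frac1{2r},\frac12)$). This gives exactly $\frac{3(2-q)}{4q}$ (resp.\ $\frac{2-r}{2r}$). Without these steps, neither the values of the thresholds nor the restriction to $q\ge\frac32$, $r\ge\frac43$ is established.

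Two smaller points:
\begin{itemize}
\item Strict decrease of $T\mapsto\|\nabla V_0\|_{L^2}=CT^{2\delta-5\gamma/2}$ requires $4\delta<5\gamma$. It does not follow merely from $\eps(0)$ increasing with $T$, since the amplitude $T^{2\delta}$ grows too.
\item You still need to say how $V$ is continued past $T$. The paper replaces $T-t$ by $|T-t|$, which is continuous into $L^2$ at $t=T$ because $3\gamma<4\delta$.
\end{itemize}
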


Although the whole range $q\in(1,2)$ can be analysed in \eq{quantitative}, we consider the simpler case $q\in[\tfrac32,2)$
which, in particular, excludes $q=\tfrac65$ that plays a major role in Theorem \ref{main2}. For \eq{quantitative2}, the case
$r<\tfrac43$ is treated
in Theorem \ref{main3} and it will become clear from the proof of Theorem \ref{inflating} (see Section \ref{sharpsohr})
how important this threshold is in our approach. Both \eq{quantitative} and \eq{quantitative2} show that $(r,q)=(2,2)$ is a limit case.\par
\par
Our next results concern pointwise blow-up.
We first deal with the pointwise blow-up of $V$: in the next result we reach the threshold of the criticality of $f$,
although we do not attain it.

\begin{theorem}\label{mainLinfty}
Let $\G_\sigma$ be as in \eqref{spaceG}. For any $T>0$, any $(r,q)\in[1,\infty)\times(1,\infty)$ satisfying
\neweq{ipotesona}
\bigg[r\ge 1\mbox{ and }q>\frac32\mbox{ and }2<\frac{2}{r}+\frac{3}{q}<5\bigg]\quad\mbox{or}\quad
\bigg[r\ge1\mbox{ and either }\frac{2}{r}+\frac{3}{q}\ge5\mbox{ or }1<q\le\frac32\bigg]\, ,
\endeq
there exist $V_0\in\G_\sigma$ and $f\in L^r_{\rm loc}(\R_+;L^q(\R^3))$ such that:\par\noindent
-- $(\rm{a})$-$(\rm{b})$-$(\rm{c})$ in Theorem \ref{main2} hold;\par\noindent
-- such solution satisfies
\neweq{blupVinftym}
\lim_{t\to T_m}\|V(t)\|_{L^\infty(\R^3)}=\infty\, ,
\end{equation}
while the associated pressure $P$ satisfies $P\in L^2_{\rm loc}(\R_+;L^p(\R^3))$ for all $p\ge1$ and
$\nabla P\in L^r_{\rm loc}(\R_+;L^q(\R^3))$.
\end{theorem}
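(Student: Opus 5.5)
We follow the two-step scheme outlined in the introduction: first build a single-bubble solution that blows up at time $T$, then superpose countably many rescaled copies. For the first step we use the Lane--Emden-type family of Section \ref{reconstruction}. Concretely, take the field $V=V(\xi,t)$ of \eqref{sceltaconl}, modelled on the bubble $(\epsilon(t)^2+|\xi|^2)^{-\gamma}$ with concentration scale $\epsilon(t)\sim (T-t)^{\delta}$, cut off and made solenoidal through the parameter $\ell$. Then define $f:=g=V_t-\Delta V+(V\cdot\nabla)V$ and recover $P$ from the Helmholtz--Weyl decomposition of $g$. The point is to choose $(\delta,\gamma,\ell)$ so that all of the following hold at once:
\begin{itemize}
\item the existence conditions of Lemma \ref{existencecond} hold;
\item the uniqueness and energy-equality conditions of Lemma \ref{allterms} hold, i.e. $V\in L^{2q/(q-3)}(L^q)$ for large $q$, a Prodi--Serrin class;
\item $\|V(t)\|_{L^\infty}\sim\epsilon(t)^{-2\gamma}\to\infty$, which follows from Lemma \ref{wekandblup} as soon as $\gamma>0$;
\item $g\in L^r(0,T;L^q)$ by Lemma \ref{defforce2}.
\end{itemize}

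The core computation is the scaling of $g$. By the magic cancellation of Remark \ref{magic}, the nonlinear term is no more singular than the linear ones. So $\|g(t)\|_{L^q}$ is dominated by the $\Delta V$ and $V_t$ contributions, which scale like $\epsilon^{-2\gamma-2+3/q}$ (the $V_t$ term carries the factor $\epsilon'/\epsilon\sim (T-t)^{-1}$, which becomes $\epsilon^{-2}$ after choosing $\delta=1/2$, or an analogous balance). Then $g\in L^r(0,T;L^q)$ requires $\delta r(2\gamma+2-3/q)<1$, which rewrites as $2\gamma<\frac{2}{r}+\frac{3}{q}-2$ when $\delta=\frac12$. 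The set of admissible $\gamma>0$ is nonempty exactly when $\frac2r+\frac3q>2$.

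This is where the two alternatives in \eqref{ipotesona} enter. In the first range, $2<\frac2r+\frac3q<5$ with $q>\frac32$, we pick $\gamma$ small and positive in the window above. In the second range, either $\frac2r+\frac3q\ge5$ or $q\le\frac32$, the force constraint is either automatically satisfied for a fixed admissible $\gamma$, or the $L^q$ integrability for $q\le\frac32$ is governed by the spatial decay at infinity rather than by the concentration. In that case we adjust $\ell$ to control the tail and take any $\gamma$ allowed by Lemma \ref{allterms}. The statements $P\in L^2(L^p)$ for all $p$ and $\nabla P\in L^r(L^q)$ then follow from the pressure estimates at the end of Section \ref{reconstruction}, since $\nabla P$ is the gradient part of $g$ and the Helmholtz projection is bounded on $L^q$ for $1<q<\infty$.

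For the second step we set $W=\sum_m c_m V_m(\cdot-\xi_m,\cdot)$. Each $V_m$ is the first-step solution with blow-up time $T_m\uparrow T$; the point $\xi_m$ is either fixed or chosen far apart, and the weights $c_m$ decay fast. The force is $f=\sum_m c_m g_m+\text{(cross terms)}$. The cross terms $(V_m\cdot\nabla)V_k$ with $m\ne k$ are smooth near each singular point $(\xi_m,T_m)$, because $V_k$ is smooth there, so they belong to the same $L^r(L^q)$ class. Choosing $c_m$ summable against the norms of the $V_m$ in all the relevant spaces gives convergence of the series. This yields (a)--(c) for $W$ and preserves $\|W(t)\|_{L^\infty}\to\infty$ at each $T_m$, since the remaining terms stay bounded near $T_m$.

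The main obstacle is the simultaneous bookkeeping of exponents: we must check that the $\gamma$-window forced by $f\in L^r(L^q)$ is compatible with the restrictions of Lemmas \ref{existencecond} and \ref{allterms}, especially near the endpoint $\frac2r+\frac3q\downarrow2$, where $\gamma\downarrow0$. I would first try to verify that the constraints of Lemma \ref{allterms} are all of the form $\gamma<\text{const}$ with a positive constant, so that small $\gamma$ always works.
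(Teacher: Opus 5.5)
Your scaling idea works and gives a cleaner choice of parameters than the paper's. However, you have misread \eqref{sceltaconl}. There the concentration scale is $(T-t)^{\gamma}$, and by \eqref{VLqq} with $q=\infty$ the amplitude is $\|V(t)\|_{L^\infty}\sim(T-t)^{2\delta-3\gamma}$. Solenoidality comes from the factor $(2yz,-xz,-xy)$, which is divergence-free and orthogonal to $\xi$; it does not come from $\ell$, and there is no cut-off ($\ell=5$ is fixed). So $L^\infty$ blow-up is not automatic once $\gamma>0$, and Lemma \ref{wekandblup} only treats finite $q$. Likewise, the force criterion is \eqref{sumg22}, not Lemma \ref{defforce2}, which covers only two pairs $(r,q)$.

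Your parabolic ansatz is $\gamma=\frac12$, $\delta=\frac34-\frac\eta2$, that is, $V=(T-t)^{-\eta}U(\xi/\sqrt{T-t})$. Inserting it into Lemmas \ref{existencecond}--\ref{allterms} and into \eqref{sumVt}, \eqref{Lap}, \eqref{VnablaV} gives exactly
\[
0<\eta<\min\Big\{\frac12,\ \frac{\mathcal S_f-2}{2},\ \frac{\mathcal S_f-1}{4}\Big\},\qquad \mathcal S_f=\frac2r+\frac3q .
\]
The obstacle you flagged therefore disappears: uniqueness is $4\delta+1>6\gamma$, i.e.\ $\eta<\frac12$, and the window is nonempty if and only if $\mathcal S_f>2$.

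The paper instead keeps both parameters free. It imposes $\delta\ge\delta^*=\frac54-\frac{\mathcal S_f}{4}$ to discard the third term of \eqref{sumg22}, puts $\gamma$ just above $\frac{2\delta}{3}$, and checks \eqref{firstineq}--\eqref{secondineq} by cases on $q$ and $\mathcal S_f$. That case analysis is where the split in \eqref{ipotesona} comes from, and both choices meet at $(\delta,\gamma)=(\frac34,\frac12)$ as $\mathcal S_f\downarrow 2$. Your one-parameter choice works uniformly and shows that \eqref{ipotesona} is simply $\mathcal S_f>2$, since $q\le\frac32$ already forces $\frac3q\ge 2$.

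\textbf{Points to correct.}

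\emph{Second alternative.} Your separate treatment should be dropped, and for $q\le\frac32$ it is wrong as written. At $q=\frac32$ one has $\mathcal S_f-2=\frac2r$, while $\|g(t)\|_{L^{3/2}}\sim(T-t)^{-\eta}$. So an $\eta$ that is merely admissible for Lemma \ref{allterms} fails as soon as $r\eta\ge1$. Changing $\ell$ cannot help: in a self-similar profile the time exponents do not depend on $\ell$. The spatial tail is never the obstruction either, since $V_t\sim|\xi|^{-3}$ lies in $L^q$ for every $q>1$.

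\emph{Nonlinear term.} It is not dominated by the cancellation. One has $\|(V\cdot\nabla)V\|_{L^q}/\|\Delta V\|_{L^q}\sim(T-t)^{\frac12-\eta}$, so domination holds precisely because $\eta<\frac12$, and it must be checked through \eqref{VnablaV}.

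\emph{Force and pressure.} Take $f=g+\nabla P$ with $-\Delta P=\nabla\cdot[(V\cdot\nabla)V]$, or else $f=g$ and $P=0$. Do not combine $f=g$ with $P$ taken as the gradient part of $g$.

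\emph{Cross terms.} In the superposition the cross terms are not smooth near $(\xi_i,T_i)$. A clean uniform bound is $\|(V_i\cdot\nabla)V_j\|_{L^r(L^q)}\le\|V_i\|_{L^{2r}(L^{2q})}\|\nabla V_j\|_{L^{2r}(L^{2q})}$, which is finite uniformly in $i,j$ once $\eta<\frac{\mathcal S_f-2}{4}$.

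\emph{Item (c), a caveat that applies equally to the paper's construction.} Item (c) does not follow as claimed. Away from $\xi_m$ the field behaves in time like $|T_m-t|^{2\delta}$ with $2\delta=\frac32-\eta$, so it is not $C^2$ across $t=T_m$. Moreover, if all $\xi_m$ coincide, $W$ is not even locally bounded near $(\xi_0,\lim_m T_m)$. Of your two options, you should at least take $|\xi_m|\to\infty$.
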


We reach criticality for $f$, and also strict subcriticality, by lowering the pointwise blow-up request to $\nabla V$.

\begin{theorem}\label{mainLinftygrad}
Let $\G_\sigma$ be as in \eqref{spaceG}. For any $T>0$, any $(r,q)\in[1,\infty)\times(1,\infty)$ satisfying
\neweq{ipotesonagrad}
\bigg[r\ge1\mbox{ and }q>3\mbox{ and }1<\frac{2}{r}+\frac{3}{q}<5\bigg]\quad\mbox{or}\quad
\bigg[r\ge1\mbox{ and either }\frac{2}{r}+\frac{3}{q}\ge5\mbox{ or }1<q\le3\bigg]\, ,
\endeq
there exist $V_0\in\G_\sigma$ and $f\in L^r_{\rm loc}(\R_+;L^q(\R^3))$ such that:\par\noindent
-- $(\rm{a})$-$(\rm{b})$-$(\rm{c})$ in Theorem \ref{main2} hold;\par\noindent
-- such solution satisfies
\neweq{blupnablaVinftym}
\lim_{t\to T_m}\|\nabla V(t)\|_{L^\infty(\R^3)}=\infty\, ,
\endeq
while the associated pressure $P$ satisfies $P\in L^2_{\rm loc}(\R_+;L^p(\R^3))$ for all $p\ge1$ and
$\nabla P\in L^r_{\rm loc}(\R_+;L^q(\R^3))$;\par\noindent
- there exist $(r,q)\in[1,\infty)\times(1,\infty)$ satisfying \eqref{ipotesonagrad}$_1$  such that
\neweq{bullet}
V\in C^0(\R^3\times\R_+)\quad\mbox{and}\quad V(\xi,T_m)\equiv0\mbox{ in }\R^3\mbox{ for all }m\ge1\, .
\endeq
\end{theorem}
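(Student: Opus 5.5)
The plan follows the general scheme outlined in the Introduction, specialised to pointwise blow-up of $\nabla V$. Take the Lane--Emden-type solenoidal field $V=V(\xi,t)$ of Section~\ref{reconstruction}, depending on $T$ and on $(\delta,\gamma,\ell)$. Spatially, $V$ is a bubble concentrated at scale $\epsilon(t)\sim (T-t)^{\gamma}$ with amplitude $\sim (T-t)^{-\delta}$, so that
\[
\|\nabla V(t)\|_{L^\infty(\R^3)}\sim (T-t)^{-\delta-\gamma}.
\]
The forcing is defined by $f:=V_t-\Delta V+(V\cdot\nabla)V+\nabla P$, with $P$ obtained from the Helmholtz--Weyl decomposition of $g=V_t+(V\cdot\nabla)V-\Delta V$.

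The steps, in order, are these. First, use Lemma~\ref{existencecond} and Lemma~\ref{allterms} to select the parameter region in which $V$ is a global Leray--Hopf solution satisfying the energy equality and the Serrin-type condition $V\in L^{2q/(q-3)}_{\rm loc}(\R_+;L^q)$; this gives (a)--(b). Second, use Lemma~\ref{wekandblup} to add the requirement $\delta+\gamma>0$, i.e. pointwise blow-up of $\nabla V$ at $T$, while allowing $V$ itself to stay bounded. Third, use Lemma~\ref{defforce2}, together with the magic cancellation of Remark~\ref{magic}, to compute the $L^r(0,T;L^q)$ norm of $g$. The dominant terms are $V_t$ and $\Delta V$, whose $L^q$ norms scale like $(T-t)^{-\delta-2\gamma+3\gamma/q}$ (up to the $\ell$-dependent time factor). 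Requiring $r$-integrability gives a linear inequality in $(\delta,\gamma)$ involving $2/r+3/q$. The pressure bounds $\nabla P\in L^r L^q$ and $P\in L^2L^p$ then follow from the Helmholtz projection being bounded on $L^q$, $1<q<\infty$, together with the estimates at the end of Section~\ref{reconstruction}.

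The core of the argument, and the step I expect to be the main obstacle, is checking that the inequalities collected in these three steps are compatible under \eqref{ipotesonagrad}. Roughly, one needs
\[
\delta+\gamma>0
\quad\text{together with}\quad
r\bigl(\delta+2\gamma-3\gamma/q\bigr)<1 .
\]
For $q\le3$ the coefficient $2-3/q\le1$ makes this easy: choose $\delta$ slightly negative relative to $\gamma$, with $\gamma$ small. For $q>3$ it forces $\gamma(1-3/q)<1/r$, which is where $2/r+3/q>1$ enters. I would first try the choice $\delta=-\gamma+\eta$ with $\eta\downarrow0$. The regime $2/r+3/q\ge5$ is the trivially integrable case and follows by the same choice. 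The upper bound $<5$ in the first bracket is only a case split matching the parametrisation of Lemma~\ref{defforce2}.

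Fourth, repeat the series argument of Section~\ref{proofs}. Set $W=\sum_m c_m V_m(\cdot-\xi_m,\cdot)$, where $V_m$ blows up at $T_m\uparrow T$ and the weights $c_m$ decay fast enough that all the norms above converge. The gradient blow-up at each $T_m$ survives because the other terms are smooth near $(\xi_m,T_m)$; uniqueness and the energy equality are inherited from the Serrin class. Finally, for \eqref{bullet}, choose $\delta<0$, so the amplitude $(T-t)^{-\delta}\to0$. Then $V$ is continuous and vanishes identically at each $T_m$, provided $V_m(T_m)\equiv0$ and the translations/weights are arranged with the bubbles supported in time so that the other $V_k$ also vanish there. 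If they do not all vanish simultaneously, I would instead arrange $T_m$ so that each $V_k$ is zero for $t\ge T_k$, which the $\ell$-cutoff in \eqref{sceltaconl} presumably permits. The existence of admissible $(r,q)$ in \eqref{ipotesonagrad}$_1$ with $\delta<0$ is then a matter of picking $q>3$ large and $r$ near $1$.
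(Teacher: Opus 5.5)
Your strategy (explicit bubble, force defined by substitution, exponent bookkeeping, then a series) is the paper's. The gap is in the exponent bookkeeping, which you rightly call the core: you use the wrong constraint set, and the parameter choice you propose fails.

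The time derivative does not scale like $\Delta V$. Differentiating the time profile produces a factor $(T-t)^{-1}$, not $\epsilon^{-2}=(T-t)^{-2\gamma}$. Your $\delta$ corresponds to $3\gamma-2\delta$ in the paper's notation, so the computations behind \eqref{sumVt} and \eqref{VnablaV} (with $\ell=5$) give, in your variables,
\[
\|V_t(t)\|_{L^q(\R^3)}\sim (T-t)^{-\delta-1+3\gamma/q},\qquad
\|(V\cdot\nabla)V(t)\|_{L^q(\R^3)}\sim (T-t)^{-2\delta-\gamma+3\gamma/q}.
\]
These agree with your $\Delta V$ rate only when $\gamma=\frac12$.

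The omitted condition $r(\delta+1-3\gamma/q)<1$ is exactly what rules out ``$\gamma$ small''. With $\delta=-\gamma+\eta$ it reads $r\bigl(1-\gamma(1+\frac3q)+\eta\bigr)<1$, which is false for every $r>1$ once $\gamma$ is small. So $V_t$, which dominates $f$ in that regime, is not in $L^r(0,T;L^q(\R^3))$.

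It also means your account of where $\frac2r+\frac3q>1$ enters is not right. The $\Delta V$ condition $\gamma(1-\frac3q)<\frac1r$ holds for small $\gamma$ for every $(r,q)$. Your constraint set would therefore give gradient blow-up even when $\frac2r+\frac3q<1$, contradicting Theorem~\ref{gigasohr1}.

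The repair keeps $\delta=-\gamma+\eta$ but takes $\gamma$ in the window
\[
\frac{1-\frac1r+\eta}{1+\frac3q}<\gamma<\frac{\frac1r-\eta}{1-\frac3q}.
\]
The upper bound comes from $\Delta V$ and is needed only for $q>3$. For small $\eta>0$ the window is nonempty iff $(1-\frac1r)(1-\frac3q)<\frac1r(1+\frac3q)$, i.e.\ iff $\frac2r+\frac3q>1$. The nonlinear term and the conditions of Lemmas~\ref{existencecond}--\ref{allterms} (in your variables $\delta\le\frac32\gamma$ and $\delta<\frac12$) then hold automatically. This is the paper's argument in other coordinates. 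The paper takes $\delta\ge\delta^*=\frac54-\frac14(\frac2r+\frac3q)$, so that the $V_t$ bound in \eqref{sumg22} is dominated by the $\Delta V$ bound. Then \eqref{firstineqgrad} evaluated at $\delta^*$ is, for $q>3$, equivalent to $\frac2r+\frac3q>1$.

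For \eqref{bullet} at every $T_m$, your worry is justified. With the series of Theorem~\ref{main2}, each bubble is continued past $T_k$ through $|T_k-t|$, so $W(\cdot,T_m)=\sum_{k\ne m}2^{-k}V_k(\cdot,T_m)\neq0$. Your fallback does not fix this. If $V_k$ vanishes only for $t\ge T_k$, the bubbles with $T_k>T_m$ are still nonzero at $t=T_m$. Also, \eqref{sceltaconl} contains no cutoff; $\ell$ is only the exponent of the denominator.

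What works is a genuine time localisation:
\begin{enumerate}
\item Multiply the $k$-th bubble by a smooth $\chi_k(t)$ that vanishes for $t\le T_{k-1}$ and equals $1$ near $T_k$.
\item Continue it by zero after $T_k$. This is admissible when your $\delta<0$, since the bubble then tends to $0$ uniformly and in $L^2$.
\item The extra forcing, $\chi_k'$ times the bubble, is smooth.
\item The cross terms in $(W\cdot\nabla)W$ vanish because the time supports are disjoint.
\end{enumerate}
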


We get rid of the restriction in \eq{ipotesonagrad}$_2$ by lowering further the request of pointwise blow-up to the second
spatial derivatives.

\begin{theorem}\label{final}{final}
Let $\G_\sigma$ be as in \eqref{spaceG}. For any $T>0$, any $(r,q)\in[1,\infty)\times(1,\infty)$
there exist $V_0\in\G_\sigma$ and $f\in L^r(0,T;L^q(\R^3))$ such that:\par\noindent
-- $(\rm{a})$-$(\rm{b})$ in Theorem \ref{main2} hold;\par\noindent
-- there exist a bounded strictly increasing sequence $\{T_m\}$ of positive instants $0<T_1<T_2<...<T_m<T$ and a sequence
$\{\xi_m\}\subset\R^3$ such that the solution satisfies $V,P\in C^1(\R^3\times\R_+)\cap C^\infty(\R^3\times\R_+\setminus\cup_{m=1}^\infty\{(\xi_m,T_m)\})$;\par\noindent
-- such solution satisfies $V(\xi,T_m)\equiv\nabla V(\xi,T_m)\equiv V_t(\xi,T_m)\equiv0$ in $\R^3$ for all $m\ge1$ and
\neweq{blupDeltaVinftym}
\lim_{t\to T_m}\|\Delta V(t)\|_{L^\infty(\R^3)}=\infty\, ,
\endeq
while the associated pressure $P$ satisfies $P\in L^2_{\rm loc}(\R_+;L^p(\R^3))$ for all $p\ge1$ and
$\nabla P\in L^r_{\rm loc}(\R_+;L^q(\R^3))$.
\end{theorem}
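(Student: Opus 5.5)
We follow the paper's two-step strategy: first build a single-bubble solution blowing up (in $\Delta V$) at one time $T$, then superpose countably many rescaled and translated copies. The basic ingredient is a solenoidal field of the form
\[
V(\xi,t)=\varphi(t)\,\nabla\wedge\big(\Psi_{\eps(t)}(\xi)\,e\big),
\]
where $\Psi_\eps$ is a Lane--Emden-type bubble $\Psi_\eps(\xi)=\eps^{\gamma}(\eps^2+|\xi|^2)^{-\delta}$, $\eps(t)=(T-t)^{\ell}$ for $t<T$, and $e$ is a fixed unit vector. Here $\varphi(t)=(T-t)^{\kappa}$ for $t<T$ and $\varphi\equiv0$ for $t\ge T$, so the field is switched off after $T$. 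The damping weight is what distinguishes this construction from Theorems \ref{mainLinfty} and \ref{mainLinftygrad}. We would choose $\kappa$ large enough that $V$, $\nabla V$ and $V_t$ are continuous up to $t=T$ and vanish there. At the same time, the second derivatives should still scale like $\varphi(t)\,\eps^{\gamma-2\delta-3}$ near $\xi=0$, and we require this to diverge, which forces $\kappa+\ell(\gamma-2\delta-3)<0$.

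The forcing is defined by $f:=V_t-\Delta V+(V\cdot\nabla)V+\nabla P$, with $P$ obtained from the Helmholtz--Weyl decomposition of $g=V_t+(V\cdot\nabla)V-\Delta V$, as in the pressure lemma of Section \ref{reconstruction}. By Lemma \ref{defforce2} and the magic cancellation of Remark \ref{magic}, the nonlinear term is harmless, so the summability of $f$ is dictated by $V_t$ and $\Delta V$. Computing $\|\Delta V(t)\|_{L^q}\sim\varphi\,\eps^{\gamma-2\delta-3+3/q}$, and similarly for $V_t$ with an extra factor $\eps^{-1}\eps'\sim(T-t)^{-1}$, the requirement $f\in L^r(0,T;L^q)$ becomes a finite system of strict inequalities. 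The free parameter $\kappa$ decouples the time integrability from the spatial blow-up rate, and this is the reason every pair $(r,q)\in(1,\infty)^2$ should be reachable. We will pick the parameters so that:
\begin{itemize}
\item $\Delta V$ fails to be bounded in $L^\infty$;
\item $\Delta V$ lies in $L^r_tL^q_\xi$, which is possible because $L^q$ norms gain the factor $\eps^{3/q}$ while the $L^\infty$ norm does not;
\item $V_t$ and $\nabla V$ remain bounded, which gives $C^1$ regularity.
\end{itemize}
The step we expect to be the main obstacle is this parameter bookkeeping, especially for large $q$ and large $r$. For large $q$ the gain $3/q$ is small, and large $r$ is more demanding on the time singularity, so the window for $\kappa$ shrinks. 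We expect it nevertheless to stay nonempty, because the $L^\infty$ norm and the $L^q$ norms differ by exactly $\eps^{3/q}\to0$.

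Properties (a) and (b) then follow as in Lemmas \ref{existencecond} and \ref{allterms}. The field $V$ is bounded and smooth away from $(0,T)$, and lies in $L^{2q/(q-3)}_tL^q$ for large $q$, being in fact bounded. This yields the Serrin-class uniqueness and the energy equality.

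For the countable family, we set $W=\sum_m c_mV_m(\xi-\xi_m,t)$, where $V_m$ is built with blow-up time $T_m\uparrow\overline{T}<T$, and the weights $c_m$ and translations $\xi_m$ are chosen so that the following series converge:
\begin{itemize}
\item $\sum c_m\|f_m\|_{L^r L^q}$;
\item the $C^1$ norms $\sum c_m\|V_m\|_{C^1}$;
\item the Serrin norms of the $V_m$.
\end{itemize}
The cross terms $(V_j\cdot\nabla)V_k$ must be absorbed into the force. Since all $V_m$ are $C^1$ and bounded with summable norms, these cross terms are bounded and belong to $L^r_tL^q_\xi$ for every $(r,q)$. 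We can also take the supports to be essentially disjoint by spreading the $\xi_m$, so that the blow-up of $\Delta W$ at $(\xi_m,T_m)$ is not cancelled by the other summands. Finally, $W(\cdot,T_m)\equiv\nabla W(\cdot,T_m)\equiv W_t(\cdot,T_m)\equiv0$ requires every summand to vanish at each $T_m$. We achieve this by choosing each $V_k$ to be supported in time on $(T_{k-1},T_k]$, i.e.\ by letting $\varphi_k$ also vanish to high order at $T_{k-1}$.
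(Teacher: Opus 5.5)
Your single-bubble step is the paper's mechanism with a different profile. In \eqref{scelta} the amplitude $(T-t)^{2\delta}$ and the concentration scale $(T-t)^{\gamma}$ play exactly the roles of your $\varphi\,\eps^{\gamma}$ and $\eps=(T-t)^{\ell}$. The paper then simply takes its $(\delta,\gamma)$ in the region \eqref{blupsoloLap} near $M(\tfrac54,\tfrac12)$. Your $\kappa$ and your $\gamma$ enter only through $\kappa+\ell\gamma$, so the ``damping weight'' is not an extra degree of freedom; the paper gets Theorems \ref{mainLinfty}, \ref{mainLinftygrad} and \ref{final} from one field just by moving its two exponents.

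Where you genuinely differ is the superposition. The paper sums $2^{-m}V_m$, all centred at $0$ and continued past $T_m$ through $|T_m-t|$, and has to control cross terms. Your time-disjoint supports give $(W\cdot\nabla)W=\sum_k c_k^2(V_k\cdot\nabla)V_k$ exactly, so the cross-term estimate and the anti-cancellation role of the $\xi_m$ become superfluous. More importantly, time-disjointness is what actually yields $W(\cdot,T_m)\equiv\nabla W(\cdot,T_m)\equiv W_t(\cdot,T_m)\equiv0$ for every $m$. The overlapping sum in the proof of Theorem \ref{main2} has $W(\cdot,T_m)=\sum_{k\neq m}2^{-k}V_k(\cdot,T_m)\neq0$. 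The price is that you only get blow-up as $t\uparrow T_m$: right after $T_m$ the field is $c_{m+1}V_{m+1}$, which is flat there, whereas the mirror extension gives blow-up from both sides. Spreading the $\xi_m$ with $|\xi_m|\to\infty$ is still worthwhile, for another reason. If the $\xi_m$ stay bounded, the points $(\xi_m,T_m)$ accumulate at some $(\xi^*,\overline T)$ near which $W$ is not even $C^2$, yet that point is not excluded from the smoothness claim. This also affects the paper's choice $\xi_m\equiv0$.

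Several points still need attention.

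(i) The bookkeeping you single out has nothing to do with large $q$ or $r$. Write $\|V(t)\|_{L^\infty}\sim(T-t)^{A}$ with $A=\kappa+\ell(\gamma-2\delta-1)$. You need $A>\ell$ ($\nabla V\to0$), $A>1$ ($V_t\to0$) and $A<2\ell$ ($\Delta V$ unbounded). This is possible if and only if $\ell>\tfrac12$, i.e.\ the concentration must be faster than parabolic, which is exactly the paper's $\gamma>\tfrac12$ in \eqref{deltagammalarge}. With the natural choice $\ell=\tfrac12$ the construction fails. Once $\ell>\tfrac12$, any $A\in(\max\{1,\ell,2\ell-3\ell/q\},2\ell)$ works and even gives $\|\Delta V(t)\|_{L^q}\to0$, so $r$ plays no role.

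(ii) $V_t$ decays only like $|\xi|^{-2\delta-1}$ (your $\delta$), so reaching every $q>1$ requires $\delta\ge1$. With the Talenti exponent $\delta=\tfrac12$ you lose $q\le\tfrac32$; the paper's profile decays exactly like $|\xi|^{-3}$.

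(iii) Lemma \ref{defforce2} and Remark \ref{magic} concern the paper's field, not yours. You do not need them: in the $C^1$ regime $\|(V\cdot\nabla)V(t)\|_{L^q}\le\|V(t)\|_{L^\infty}\|\nabla V(t)\|_{L^q}$ stays bounded.

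(iv) Your field is not smooth away from $(0,T)$. For $\xi\neq0$, $(T-t)^{A}$ times a smooth function, glued to $0$, is only finitely differentiable across $t=T$; the paper's $|T-t|$ extension near $M$ has the same defect. A fix is to take $\Psi_\eps=(\eps^2+|\xi|^2)^{-\delta}$, $\varphi=e^{-1/(T-t)}$ and $\eps=e^{-c/(T-t)}$ with $c$ slightly larger than $1/(2\delta+3)$, together with a $C^\infty$ cutoff at $T_{k-1}$. This preserves every other property and makes the switch-off flat.
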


\begin{remark}\label{ttoinfty}
A careful look at their proofs allows us to complement all the above statements with
\begin{align*}
&\lim_{t\to\infty}\|f(t)\|_{L^\infty(\R^3)}=\lim_{t\to\infty}\|V(t)\|_{L^\infty(\R^3)}=
\lim_{t\to\infty}\|\nabla V(t)\|_{L^\infty(\R^3)}=
\lim_{t\to\infty}\|(V\cdot\nabla)V(t)\|_{L^\infty(\R^3)}=\\
&=\lim_{t\to\infty}\|V_t(t)\|_{L^\infty(\R^3)}=
\lim_{t\to\infty}\|\Delta V(t)\|_{L^\infty(\R^3)}=\lim_{t\to\infty}\|\nabla P(t)\|_{L^\infty(\R^3)}=0\, .
\end{align*}
Therefore, our results describe a violent thunderstorm $f$ acting on the bounded interval of time $[0,T]$ and, with possibly
different magnitudes, generating infinitely many epochs of irregularity in the fluid. Once the thunderstorm is over ($t>T$), the force and the
fluid regularise and, asymptotically and uniformly, converge to a full rest.
\end{remark}

Theorem \ref{final} and its proof show that singularities of $f$ may affect only the second space derivatives of the solution
\underline{and not} the vorticity $\nabla\wedge V$ nor  the time derivative $V_t$.
Theorem \ref{final} also suggests that if we ``eliminate'' the blow-up term $\Delta V$ and consider the Cauchy problem for the Euler equations:
\begin{equation}\label{euler}
V_t+(V\cdot\nabla)V+\nabla P=f\, ,\quad\nabla\cdot V=0\quad\mbox{ in }\R^3\times\R_+\, ,\qquad
V(\xi,0)=V_0(\xi)\quad \mbox{in }\R^3\, ,
\end{equation}
then blow-up may occur with a relatively nice force. Indeed, by exploiting further the main ideas of the present paper,
we are able to reach the blow-up criterion by Beale-Kato-Majda \cite{majda}.
Closely related to recent results in \cite{cordoba}, we prove

\begin{theorem}\label{eulertheo}
Let $\G_\sigma$ be as in \eqref{spaceG} and let $T>0$. For any $\alpha\in(0,1)$ there exist $f\in C^\infty(\R^3\times[0,T))\cap C^{0,\alpha}(\R^3\times[0,T])$ and
$V_0\in\G_\sigma\subset C^\infty(\R^3)$ such that \eqref{euler} admits a solution $(V,P)$ satisfying
$$V\in C^\infty(\R^3\times[0,T))\cap C^{0,\alpha}(\R^3\times[0,T])\, ,\quad
\lim_{t\to T}\int_{0}^{t}\|\nabla\wedge V(s)\|_{L^\infty(\R^3)}\, ds=\infty\, ,\quad P\in C^{1,\alpha}(\R^3\times[0,T])\, .$$
\end{theorem}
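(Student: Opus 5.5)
We construct $V$ explicitly and then define the force by substituting $V$ into the Euler equations. That is, we set $f:=V_t+(V\cdot\nabla)V+\nabla P$, with $P$ obtained from the Helmholtz--Weyl decomposition of $V_t+(V\cdot\nabla)V$. In the spirit of Theorem \ref{final}, the ansatz is a solenoidal, Lane--Emden type profile concentrating at a single point $\xi_0$ as $t\to T$. Concretely, we take
$$V(\xi,t)=\chi(t)\,\nabla\wedge\big(\psi(|\xi-\xi_0|^2+\eps(t)^2)\,e\big)\,\eta(\xi),$$
with a fixed vector $e$ and a cutoff $\eta$. The concentration scale is $\eps(t)=(T-t)^{\gamma}$, and $\psi$ behaves like $s\mapsto s^{(1+\alpha)/2}$ near $0$ (a regularised ``bubble'' with H\"older exponent in the gradient). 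The field is divergence free by construction, and smooth and in $\G_\sigma$ at $t=0$ since $\eps(0)>0$. The point of this choice is that $V(\cdot,t)$ is bounded in $C^{0,\alpha}$ uniformly in $t$. Indeed, $V$ is like $\nabla(\rho^{1+\alpha})\sim\rho^{\alpha}$ with $\rho=(|\xi-\xi_0|^2+\eps^2)^{1/2}$, while $\nabla V\sim\rho^{\alpha-1}$, so $\|\nabla\wedge V(t)\|_{L^\infty}\sim\eps(t)^{\alpha-1}$.

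The steps are as follows.

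\textbf{(i) Blow-up.} We choose the vector potential so that the vorticity does not vanish at the concentration point (a rotation-type flow, e.g. $V\sim \rho^{\alpha-1}(\xi-\xi_0)^\perp$ locally). Then $\int_0^t\|\nabla\wedge V\|_{L^\infty}\,ds\sim\int_0^t (T-s)^{\gamma(\alpha-1)}\,ds$, which diverges as $t\to T$ once $\gamma(1-\alpha)\ge1$. We fix $\gamma=1/(1-\alpha)$.

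\textbf{(ii) Regularity of $V$.} Smoothness on $[0,T)$ is immediate. For $C^{0,\alpha}(\R^3\times[0,T])$, spatial H\"older continuity is uniform in $t$. In time, $V_t\sim\eps\dot\eps\,\partial_s\psi'\sim \eps\dot\eps\rho^{\alpha-2}$. We will estimate $|V(t)-V(t')|$ by interpolating this bound with $|V|\lesssim\rho^\alpha$; since $\eps^\alpha$ is H\"older in $t$ as long as $\gamma\alpha\ge\alpha$, we get $V\in C^{0,\alpha}$ in time.

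\textbf{(iii) Regularity of the force, using the magic cancellation (Remark \ref{magic}).} The term $V_t$ is H\"older by the same computation as in (ii), after possibly shrinking the exponent; this is why $\gamma$ must be balanced. The nonlinear term $(V\cdot\nabla)V$ is the dangerous one, since $|V||\nabla V|\sim\rho^{2\alpha-1}$ is unbounded if $\alpha<1/2$. Here we use that for the rotational profile, $(V\cdot\nabla)V$ is, up to smooth errors, a pure gradient: for a swirling flow $v_\theta(\rho)e_\theta$ it equals $-\tfrac{v_\theta^2}{\rho}e_\rho=\nabla\Pi$ with $\Pi$ radial. It can therefore be absorbed into the pressure by setting $\nabla P=-\nabla\Pi$ plus smooth terms. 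This leaves $f$ built from $V_t$ and smooth remainders, and gives $P\in C^{1,\alpha}$ because $\Pi'\sim\rho^{2\alpha-1}$. That is only $C^{0,2\alpha-1}$ when $\alpha>1/2$, so we tune the profile, or the relation between the swirl exponent and $\alpha$, so that $\nabla\Pi$ lies in $C^{0,\alpha}$. If necessary we take a higher-regularity profile for the pressure part: the swirl exponent is chosen as $\beta=\alpha$ in $|V|\sim\rho^{\beta}$, with the H\"older exponent of $\nabla P\sim\rho^{2\beta-1}$ arranged by replacing the profile with a slightly smoother one near $\rho\gg\eps$.

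\textbf{(iv) Verification.} We check that $f$ is smooth for $t<T$, that $(V,P)$ solves \eqref{euler} by construction, and that all quantities are compactly supported through $\eta$.

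The main obstacle is step (iii). We must simultaneously keep $\int\|\nabla\wedge V\|_\infty=\infty$ (which forces $\nabla V$ unbounded), $V\in C^{0,\alpha}$, and both $f$ and $\nabla P$ H\"older continuous up to $t=T$. The nonlinear term only works thanks to the gradient structure of a radial swirl. The time derivative requires the exact choice of $\gamma$ and a careful space--time H\"older estimate near $(\xi_0,T)$. I would first attempt a two-dimensional swirl extended with a $z$-cutoff, and then verify that the cutoff errors are smooth.
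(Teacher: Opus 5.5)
\textbf{There is a genuine gap in step (iii), and it is structural.} With $\psi(s)\sim s^{(1+\alpha)/2}$, your $V$ is homogeneous of degree exactly $\alpha$ in $(\xi-\xi_0,\eps)$. That leaves no room for the force.

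The statement requires both $f\in C^{0,\alpha}$ and $\nabla P\in C^{0,\alpha}$. Hence the identity $V_t+(V\cdot\nabla)V=f-\nabla P$ forces $V_t+(V\cdot\nabla)V\in C^{0,\alpha}(\R^3\times[0,T])$. Moving part of the nonlinearity into the pressure therefore cannot help; your own observation that $\Pi'\sim\rho^{2\alpha-1}$ is exactly this obstruction.

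Take $\xi_0=0$, $|e|=1$ and $a:=2\psi'(|\xi|^2+\eps^2)$. Then $V=a\,\xi\wedge e$, $V_t=4\psi''(|\xi|^2+\eps^2)\,\eps\dot\eps\,\xi\wedge e$ and $(V\cdot\nabla)V=-a^2[\xi-(\xi\cdot e)e]$. Let $S$ be a reflection in a plane containing the axis. Under $F\mapsto SF(S\,\cdot)$ the first field is odd and the second is even, so each must be $C^{0,\alpha}$ separately. Neither is:
\begin{itemize}
\item Since $\eps\dot\eps=\gamma\eps^{2-1/\gamma}$, $V_t$ is homogeneous of degree $\alpha-1/\gamma<\alpha$, whatever $\gamma$ is. Compare $\xi=0$ with a point at distance $\eps$ orthogonal to $e$: the spatial H\"older quotient is of order $\dot\eps/\eps=\gamma/(T-t)$. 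For your $\gamma=1/(1-\alpha)$ one even has $\|V_t(t)\|_{L^\infty(\R^3)}\sim(T-t)^{(2\alpha-1)/(1-\alpha)}$, which is unbounded if $\alpha<1/2$. So $V_t$ is not H\"older ``by the same computation as in (ii)''.
\item $(V\cdot\nabla)V$ is nonzero and homogeneous of degree $2\alpha-1<\alpha$ at $(0,T)$, hence not $C^{0,\alpha}$.
\end{itemize}
Separately, $(V\cdot\nabla)V$ is not even a gradient. Its curl is $4aa'(\xi\cdot e)\,\xi\wedge e\neq0$, because $a$ depends on $|\xi|$ and not only on the distance to the axis. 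The identity $(V\cdot\nabla)V=\nabla\Pi$ is a two-dimensional fact. The two-dimensional swirl with a $z$-cutoff does not escape the degree count above either.

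\textbf{The missing idea} is to make $V$ strictly smoother than $C^{0,\alpha}$ at the singular point, and to get the vorticity blow-up from fast concentration instead. Suppose $V$ is homogeneous of degree $d$ in $(\xi-\xi_0,(T-t)^\gamma)$. Then $\nabla V$, $V_t$ and $(V\cdot\nabla)V$ have degrees $d-1$, $d-1/\gamma$ and $2d-1$, and $\|\nabla\wedge V(t)\|_{L^\infty(\R^3)}\sim(T-t)^{-\gamma(1-d)}$. Everything holds as soon as $d\in(\tfrac{1+\alpha}{2},1)$, $1/\gamma\le1-d$ and $1/\gamma<d-\alpha$, which only needs $\gamma$ large. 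No structure of the nonlinearity is needed, and $\nabla P$, a Riesz transform of $(V\cdot\nabla)V$, inherits its regularity.

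In your ansatz this means $\psi(s)\sim s^{(1+\beta)/2}$ with $\beta\in(\tfrac{1+\alpha}{2},1)$, together with $\gamma\ge\tfrac{1}{1-\beta}$ and $\gamma>\tfrac{1}{\beta-\alpha}$.

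This is exactly the paper's mechanism. It uses the field \eqref{scelta}, whose amplitude $(T-t)^{2\delta}$ is the independent parameter your unused $\chi(t)$ could have supplied. In the variables $(|\xi|,(T-t)^\gamma)$ that field is homogeneous of degree $d=2\delta/\gamma-3$. The paper's constraints $\tfrac{2\delta+1}{4}\le\gamma<\min\{\tfrac{4\delta}{7+\alpha},\tfrac{2\delta-1}{3+\alpha}\}$ are precisely $1/\gamma\le1-d$, $d>\tfrac{1+\alpha}{2}$ and $1/\gamma<d-\alpha$, and they can be satisfied by taking $\delta$ large.
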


\section{Sharpness of the results}\label{integNS}

We recall here a number of known statements that show that the results in Section \ref{results} are sharp. To this end, we define the {\em strength of
the force} as
\neweq{defstrength}
\forall(r,q)\in[1,\infty)\times(1,\infty)\quad\forall f\in L^r_{\rm loc}(\R_+;L^q(\R^3))\, ,\qquad\FF:=\frac{2}{r}+\frac{3}{q}\, .
\endeq

We start with

\begin{proposition}\label{summary}
Let $V_0\in L^2_\sigma(\R^3)$ and let $f\in L^2_{\rm loc}(\R_+;L^{6/5}(\R^3))+L^{5/4}_{\rm loc}(\R_+;L^2(\R^3))$.
Let $V$ be a global Leray-Hopf solution to \eqref{NS-intro}, according to Definition \ref{LHsolutions}.\par
$(i)$ If $V\in L^4_{\rm loc}(\R_+;L^4(\R^3))$ then the energy equality holds, that is, \eqref{energyineq} with equality sign.\par
$(ii)$ If there exists $q>3$ such that $V\in L^{2q/(q-3)}_{\rm loc}(\R_+;L^q(\R^3))$, then $V$ is the unique Leray-Hopf solution to
\eqref{NS-intro}.
\end{proposition}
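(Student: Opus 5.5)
Both parts are classical (Lions--Prodi--Serrin type). The only new point is to check that the force class $f\in L^2_{\rm loc}(\R_+;L^{6/5}(\R^3))+L^{5/4}_{\rm loc}(\R_+;L^2(\R^3))$ is compatible with the duality pairings involved.

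First, a preliminary observation on the force. Write $f=f_1+f_2$ with $f_1\in L^2_{\rm loc}(L^{6/5})$ and $f_2\in L^{5/4}_{\rm loc}(L^2)$.
\begin{itemize}
\item By Sobolev, $L^{6/5}\subset H^{-1}$, so $f_1\in L^2_{\rm loc}(H^{-1})$ and $\langle f_1,V\rangle$ is integrable in time for any $V\in L^2(H^1)$.
\item For $f_2$, interpolation gives $V\in L^\infty(L^2)\cap L^2(L^6)$, hence $V\in L^{8/3}(L^{3})\cap L^\infty(L^2)$, and more to the point $V\in L^5(L^{10/3})$. This is not directly dual to $L^{5/4}(L^2)$. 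Instead I note that $L^{5/4}(L^2)\subset L^1(L^2)$ pairs with $L^\infty(L^2)$, which is enough for the definition.
\end{itemize}
The extra integrability $5/4$ will be used in (ii).

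\textbf{Part (i).} I would follow Lions/Shinbrot. Take a mollification in time $V_\eps=(V*\rho_\eps)*\rho_\eps$, made admissible via a cutoff in $[0,t]$, as test function in \eqref{weakform}. This is allowed since $V_t\in L^1(H^{-1})$ plus the $L^{4/3}$-type contribution from $(V\cdot\nabla)V$. Then pass to the limit $\eps\to0$.
\begin{itemize}
\item The linear terms converge by standard arguments, using weak $L^2$-continuity of $V$.
\item The force term converges by the preliminary observation.
\item The key term is $\int_0^t\int (V\cdot\nabla)V\cdot V_\eps$. Writing it as $-\int (V\otimes V):\nabla V_\eps$ and using $V\otimes V\in L^2(L^2)$ (since $V\in L^4L^4$) with $\nabla V_\eps\to\nabla V$ in $L^2(L^2)$, it converges to $-\int (V\otimes V):\nabla V=0$ by solenoidality.
\end{itemize}
This gives \eqref{energyineq} with equality for all $t$, after upgrading a.e. $t$ to all $t$ by weak continuity and continuity of the right side.

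\textbf{Part (ii).} I would use the weak--strong uniqueness argument.
\begin{itemize}
\item Let $U$ be another Leray--Hopf solution with the same data and set $w=U-V$. Since $V\in L^{2q/(q-3)}(L^q)$, part (i) applies to $V$ after checking $L^{2q/(q-3)}L^q\cap L^\infty L^2\cap L^2H^1\subset L^4L^4$ by interpolation. So $V$ satisfies the energy equality, and moreover $V$ is an admissible test function in the equation for $U$ (after mollification). Symmetrically, $U$ tests the equation for $V$, because the Serrin condition controls $\int (U\cdot\nabla)U\cdot V=-\int (U\otimes U):\nabla V$. Here I would rather use $\int (U\cdot\nabla)V\cdot U$-type rearrangements, with $|U|^2|V|$ controlled by $\|V\|_{L^q}\|U\|_{L^{2q/(q-1)}}^2$ and Gagliardo--Nirenberg.
\item Add the energy inequality for $U$ and the energy equality for $V$, and subtract twice the cross identity $\int U\cdot V$. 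The forcing terms cancel exactly, since both solutions have the same $f$, and this cancellation is where the force class only needs the pairings from the preliminary observation.
\item One obtains
$$\|w(t)\|_2^2+2\int_0^t\|\nabla w\|_2^2\le 2\int_0^t\!\!\int |w|^2|\nabla V|,$$
or better, after integration by parts, $2\int\!\!\int (w\cdot\nabla)w\cdot V$.
\item Bound the right side by $\|V\|_q\|w\|_2^{1-3/q}\|\nabla w\|_2^{1+3/q}$, then use Young to get $\|\nabla w\|_2^2+C\|V\|_q^{2q/(q-3)}\|w\|_2^2$. Gronwall gives $w\equiv0$.
\end{itemize}

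The main obstacle is justifying the cross identity for $\int U(t)\cdot V(t)$. This requires mollifying $V$ in time, and controlling $\langle V_t, U\rangle$ when $V_t$ contains $f_2\in L^{5/4}(L^2)$ and the nonlinear term $(V\cdot\nabla)V$, which lies in $L^{2q/(q-3)}L^q\cdot L^2L^2$-type spaces dual to the regularity of $U$. I would handle this exactly as in Galdi's survey on Leray--Hopf uniqueness, checking each pairing against the available integrability of $U\in L^\infty L^2\cap L^2H^1$.
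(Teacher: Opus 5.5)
Your proposal is correct and follows the same route as the paper, which proves the proposition only by citing the classical Lions time-mollification argument for (i) and Serrin's weak--strong uniqueness argument (as in Sohr's book, Chapter V) for (ii). Three small points: your pairings use only $f\in L^2(H^{-1})+L^1(L^2)$, so the exponent $5/4$ is never needed; the interpolation gives $V\in L^{10/3}(L^{10/3})$, not $L^5(L^{10/3})$; and when justifying the cross identity you should keep the trilinear terms as $\int (U\cdot\nabla)U\cdot V$ and $\int (V\cdot\nabla)V\cdot U$, bounded by $\|V\|_{L^q}\|U\|_{L^{2q/(q-2)}}\|\nabla U\|_{L^2}$ and $\|V\|_{L^q}\|U\|_{L^{2q/(q-2)}}\|\nabla V\|_{L^2}$ respectively, because the rearranged forms $\int (U\otimes U):\nabla V$ or $\int (U\cdot\nabla)V\cdot U$ can only be estimated with $\nabla V\in L^2(L^2)$ if $U\in L^4(L^4)$, which a general Leray--Hopf $U$ need not satisfy.
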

\begin{proof} For Item $(i)$, see \cite{lions} and also \cite[Theorem 4.1]{Galdi-evol}. When
$f\in L^2_{\rm loc}(\R_+;L^{6/5}(\R^3))\subset L^2_{\rm loc}(\R_+;H^{-1}(\R^3))$
or $f\in L^{5/4}_{\rm loc}(\R_+;L^2(\R^3))\subset L^1_{\rm loc}(\R_+;L^2(\R^3))$, see \cite[Theorem 1.4.1, Chapter V, p.272]{sohr}.\par
For Item $(ii)$, see Serrin \cite[Theorem 5]{serrin2} and also \cite[Theorem 1.5.1, Chapter V, p.276]{sohr}.
\end{proof}

For our purposes, the following significantly different functional setting due to Kozono-Sohr \cite[Theorem 2.6]{kozono}, is of great interest.

\begin{proposition}\label{kozsohr}
Let $V_0\in L^3_\sigma(\R^3)$, let $T_0>0$, let $f\in L^1(0,T_0;L^2(\R^3))\cap L^r(0,T_0;L^q(\R^3))$ for
some $1<r<\infty$ and $1<q\le3$ satisfying $2/r+3/q<3$. Then there exists $0<T_*\le T_0$ and a weak solution
$V$ to \eqref{NS-intro} (not necessarily being a Leray-Hopf solution, i.e., \eqref{energyineq} may not hold)
satisfying $V\in L^\infty(0,T_*;L^3(\R^n))$.
\end{proposition}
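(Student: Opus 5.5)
Proposition \ref{kozsohr} is quoted from Kozono--Sohr \cite[Theorem 2.6]{kozono}. The natural proof is to check that its hypotheses fit the framework of that paper and to reproduce the standard construction of an $L^3$-valued local solution. The plan is to write $V=U+w$, where $U$ solves the linear Stokes problem with data $(V_0,f)$, and to obtain $w$ as a fixed point of the Duhamel map $w\mapsto -\int_0^t e^{(t-s)\Delta}\mathbb P\nabla\cdot\big((U+w)\otimes(U+w)\big)(s)\,ds$. The fixed point will be sought in a Kato-type space $X_{T_*}$ of functions that are bounded in $L^\infty(0,T_*;L^3)$ and belong to $L^{\rho}(0,T_*;L^{p})$ for a pair satisfying $2/\rho+3/p=1$, $p>3$.

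\textbf{Step 1: the linear part.} We estimate $U$. The free part $e^{t\Delta}V_0$ lies in $L^\infty(0,T_0;L^3_\sigma)$. Since $V_0\in L^3$, its $L^\rho(0,T;L^p)$ norm tends to $0$ as $T\to0$, by density of smooth data. The forced part $\int_0^t e^{(t-s)\Delta}\mathbb P f(s)\,ds$ is treated by heat-kernel $L^q\to L^3$ smoothing combined with maximal regularity in the sense of Giga--Sohr \cite{giga}.

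\begin{itemize}
\item Since $q\le3$, we have $\|e^{\tau\Delta}\mathbb P f\|_{L^3}\lesssim \tau^{-\frac32(\frac1q-\frac13)}\|f\|_{L^q}$.
\item Hence $\sup_{t<T}\big\|\int_0^t e^{(t-s)\Delta}\mathbb Pf(s)\,ds\big\|_{L^3}\lesssim \|f\|_{L^r(0,T;L^q)}\,\big\|\tau^{-\frac3{2q}+\frac12}\big\|_{L^{r'}(0,T)}$.
\item The last factor is finite and tends to $0$ with $T$ exactly when $\big(\frac3{2q}-\frac12\big)r'<1$, that is, $\frac2r+\frac3q<3$. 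This is where the hypothesis $\mathcal S_f<3$ enters.
\item The same argument gives smallness of this term in $L^\rho(0,T;L^p)$.
\end{itemize}

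The additional assumption $f\in L^1(0,T_0;L^2)$ guarantees that $U$, and later $V$, is a distributional solution with finite-energy-type control. It also makes $f$ an admissible force in the weak formulation \eqref{weakform}.

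\textbf{Step 2: the contraction.} We use the standard bilinear estimate in the Kato space,
$$\Big\|\int_0^t e^{(t-s)\Delta}\mathbb P\nabla\cdot(a\otimes b)\,ds\Big\|_{X_T}\le C\|a\|_{X_T}\|b\|_{X_T}.$$
Combined with the smallness of $\|U\|_{L^\rho(0,T;L^p)}$ for small $T$, this makes the Duhamel map a contraction on a small ball of $X_{T_*}$. Its fixed point gives $V=U+w\in L^\infty(0,T_*;L^3)$, and testing the integral equation against $\phi\in\mathcal D_{T_*}$ shows that $V$ is a weak solution in the sense of \eqref{weakform}.

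No energy inequality is claimed. Since $V_0\notin L^2$ in general, the solution need not satisfy \eqref{energyineq}, which explains the caveat in the statement.

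\textbf{Main obstacle.} The delicate point is the forcing term in Step 1, namely controlling the singular time integral in the $L^3$ and $L^\rho_tL^p_x$ norms uniformly and with vanishing size as $T\to0$. The endpoint $q=3$ is allowed, and it is covered by the same formula. For $q$ close to $1$, the borderline exponents may require Hardy--Littlewood--Sobolev in time instead of Hölder, or the maximal regularity estimates of \cite{giga} applied to $(-\Delta)^{-1/2}f$.
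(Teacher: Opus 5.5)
The paper does not prove this proposition; it is quoted from \cite[Theorem 2.6]{kozono}. Your Kato-type reconstruction therefore has to stand on its own, and Step~2 does not close as written. Write $B(a,b)=\int_0^te^{(t-s)\Delta}\mathbb P\nabla\cdot(a\otimes b)\,ds$.

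\textbf{The contraction does not use the smallness.} $\|U\|_{L^\infty(0,T;L^3)}\ge\|V_0\|_{L^3}$ never becomes small. With the bound $\|B(a,b)\|_{X_T}\le C\|a\|_{X_T}\|b\|_{X_T}$, the linear part $B(U,w)+B(w,U)$ of your map is controlled only by $C\|U\|_{X_T}\|w\|_{X_T}$. So there is no contraction for large $V_0$. You need bounds in which $U$ enters only through the auxiliary norm where it is small, e.g.\ $\|B(a,b)\|_{L^\infty(0,T;L^3)}\le C\|a\|_{L^\rho(0,T;L^p)}\|b\|_{L^\rho(0,T;L^p)}$.

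\textbf{That bound is a time endpoint.} For $3<p\le 6$, $\|B(a,b)(t)\|_{L^3}\lesssim\int_0^t(t-s)^{-3/p}\|a(s)\|_{L^p}\|b(s)\|_{L^p}\,ds$, with $\|a\|_{L^p}\|b\|_{L^p}\in L^{\rho/2}(0,T)$. Since $\frac3p(\frac\rho2)'=1$ exactly, H\"older only yields $\int_0^t(t-s)^{-1}ds$. The mixed $L^3\times L^p$ bound is borderline in the same way. The estimate is in fact false for $\rho>6$. Take time-disjoint, spatially far apart, critically rescaled copies $\lambda_k\Phi_k$ of one divergence-free profile and superpose them. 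This gives $\|a\|^\rho_{L^\rho L^p}\sim\sum_k\lambda_k^\rho$ and $\|a\|_{L^\infty L^3}\sim\sup_k\lambda_k$, while $\|B(a,a)\|_{L^3}^3\sim\sum_k\lambda_k^6$ at the final time. So it fails even with the full $X_T$-norm on both sides.

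\textbf{You cannot avoid this regime.} Your heat-kernel bound for the source term in $L^\rho(0,T;L^p)$ needs $\tau^{-\frac32(\frac1q-\frac1p)}$ to be locally integrable, i.e.\ $\frac3q-\frac3p<2$. For $q\le\frac98$ this forces $p<\frac92$, i.e.\ $\rho>6$.

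\textbf{The standard repair} is Kato's weighted norm $\|u\|_{K_T}:=\sup_{0<t<T}t^{\frac12(1-\frac3p)}\|u(t)\|_{L^p}$, with $3<p\le6$ and $\frac3p>\frac2r+\frac3q-2$. Such $p$ exists exactly because $\frac2r+\frac3q<3$. The key estimates are:
\begin{itemize}
\item The Beta integrals $\int_0^t(t-s)^{-\frac12-\frac3{2p}}s^{-(1-\frac3p)}ds$ and $\int_0^t(t-s)^{-\frac3p}s^{-(1-\frac3p)}ds$ give $\|B(a,b)\|_{K_T}+\|B(a,b)\|_{L^\infty(0,T;L^3)}\le C\|a\|_{K_T}\|b\|_{K_T}$.
\item $\|e^{t\Delta}V_0\|_{K_T}\to0$ as $T\to0$, by density.
\item Your H\"older argument with target $L^p$ gives $\|\int_0^te^{(t-s)\Delta}\mathbb Pf\,ds\|_{K_T}\lesssim T^{\frac12(3-\frac2r-\frac3q)}\|f\|_{L^r(0,T;L^q)}$. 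The kernel lies in $L^{r'}$ precisely when $\frac2r+\frac3q<2+\frac3p$.
\end{itemize}
Contract in a small ball of $K_{T_*}$. Then $V\in L^\infty(0,T_*;L^3)$ follows from your Step~1 plus the $L^3$ bilinear bound.

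This reverses your assessment of the main obstacle. Under the strict inequality the source term is elementary for every $q>1$: H\"older in time suffices and no Hardy--Littlewood--Sobolev is needed. The real issue is the large-data, critical $L^\infty_tL^3_x$ control of the quadratic term. Also, the hypothesis $f\in L^1(0,T_0;L^2)$ is never used in this construction.
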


But to show sharpness of Theorems \ref{main2} and \ref{main3} we also need the following statement taken from Sohr
\cite[Theorem 4.2.2, Chapter V, p.345]{sohr}.

\begin{proposition}\label{supersohr}
Let $V_0\in\G$, let $T_0>0$, let $f\in L^{4/3}(0,T_0;L^2(\R^3))+L^4(0,T_0;H^{-1}(\R^3))$. Then there exists $0<T_*\le T_0$ and a
unique Leray-Hopf solution $V$ to \eqref{NS-intro} in $(0,T_*)$
satisfying $V\in L^8(0,T_*;L^4(\R^3))$ and $\nabla V\in L^4(0,T_*;L^2(\R^3))$.
\end{proposition}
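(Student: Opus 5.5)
Proposition~\ref{supersohr} is quoted from Sohr \cite[Theorem 4.2.2, Chapter V]{sohr}. If one wanted a self-contained argument, I would use the standard strong-solution scheme. It combines a local solution of the Stokes problem with maximal-regularity estimates and a contraction argument in the space
$$\X_{T_*}:=\{V:\ V\in L^8(0,T_*;L^4(\R^3)),\ \nabla V\in L^4(0,T_*;L^2(\R^3))\}.$$
The force is split as $f=f_1+f_2$ with $f_1\in L^{4/3}(0,T_0;L^2)$ and $f_2\in L^4(0,T_0;H^{-1})$.

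The first step is the linear problem. Let $U$ solve the Stokes system with data $V_0\in\G$ and force $f$. For $f_2\in L^4(H^{-1})$, the energy estimate and interpolation give $U\in L^4(0,T;\dot H^{1})$. Indeed, the Stokes semigroup maps $H^{-1}$ data to $\dot H^1$ with a singularity $(t-s)^{-1}$, and the maximal $L^p$-regularity of \cite{giga} yields $\nabla U\in L^4(L^2)$ whenever $f_2\in L^4(H^{-1})$. For $f_1\in L^{4/3}(L^2)$, the smoothing bound $\|\nabla e^{(t-s)A}P f_1(s)\|_{L^2}\lesssim (t-s)^{-1/2}\|f_1(s)\|_{L^2}$ combined with Hardy--Littlewood--Sobolev in time ($1+\tfrac14=\tfrac34+\tfrac12$) gives $\nabla U\in L^4(L^2)$. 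Since $V_0\in\G$, the free evolution is smooth. The Gagliardo--Nirenberg inequality $\|U\|_{L^4}\le c\|U\|_{L^2}^{1/4}\|\nabla U\|_{L^2}^{3/4}$, together with $U\in L^\infty(L^2)$, then gives $U\in L^8(L^4)$ after a Hölder computation in time.

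The second step is the nonlinear fixed point. Write $V=U+W$, where $W$ solves the Stokes problem with zero data and force $-P(V\cdot\nabla)V$. Since $\|(V\cdot\nabla)V\|_{H^{-1}}\le\|V\|_{L^4}^2$ and $V\in L^8(L^4)$, we get $(V\cdot\nabla)V\in L^4(H^{-1})$, which is exactly the class handled in the first step. The map $\Phi(W)$ is therefore quadratic and bounded in $\X_{T_*}$ with constants independent of $T_*$. The norm of $U$ on $(0,T_*)$ tends to $0$ as $T_*\to0$ by absolute continuity of the integrals, so $\Phi$ is a contraction on a small ball for $T_*$ small. The fixed point is a weak solution in $\X_{T_*}$.

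Finally, $L^8(L^4)$ is a Serrin class, since $2/8+3/4=1$. Moreover $V\in L^4(L^4)$ because $T_*<\infty$, so Proposition~\ref{summary}(i) gives the energy equality and hence the energy inequality, which makes $V$ a Leray-Hopf solution. Proposition~\ref{summary}(ii) with $q=4$, whose Serrin exponent is $2q/(q-3)=8$, gives uniqueness among Leray-Hopf solutions. The main obstacle is the first step, namely the linear estimate for the $L^4(H^{-1})$ part of the force, which requires genuine maximal regularity (the Giga--Sohr theory) rather than simple semigroup bounds. I would justify it by applying $A^{-1/2}$ and invoking maximal $L^4(L^2)$ regularity for the Stokes operator.
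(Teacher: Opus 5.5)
The paper gives no proof of this statement. It simply quotes Sohr \cite[Theorem 4.2.2, Chapter V]{sohr}, so your sketch must stand on its own. Its architecture is the right one: Duhamel formula, a fixed point in a Serrin class, maximal regularity for the gradient, then Lions and Serrin. However, the step that is supposed to produce $L^8(L^4)$ fails. From $\|U\|_{L^4}\le c\|U\|_{L^2}^{1/4}\|\nabla U\|_{L^2}^{3/4}$ you get $\|U(t)\|_{L^4}^8\le c\|U(t)\|_{L^2}^2\|\nabla U(t)\|_{L^2}^6$. So $U\in L^\infty(L^2)$ and $\nabla U\in L^4(L^2)$ yield only $U\in L^{16/3}(L^4)$; reaching $L^8$ would require $\nabla U\in L^6(L^2)$.

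This is a scaling obstruction, not a slip. Interpolating the energy class $L^\infty(L^2)$ (index $\frac2r+\frac3q=\frac32$) with the critical class $\nabla U\in L^4(L^2)$ lands at index $\frac98$, not at the critical index $1$ of $L^8(L^4)$. The paper's own fields confirm this: by \eqref{VLqq}, \eqref{Vrq} and \eqref{partialrq}, $V$ in \eqref{scelta} with $\delta=\frac14$ and $\frac5{18}\le\gamma<\frac3{10}$ has $V\in L^\infty(0,T;L^2)$ and $\nabla V\in L^4(0,T;L^2)$, but $V\notin L^8(0,T;L^4)$.

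The same defect breaks your second step. Maximal regularity bounds $\|\nabla\Phi(W)\|_{L^4(L^2)}$ by $c\|V\|_{L^8(L^4)}^2$. But the $L^8(L^4)$ component of $\Phi(W)$, the only one entering the nonlinearity, is not controlled, so the contraction does not close in $\X_{T_*}$.

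To repair it, estimate the $L^4$ norm directly on the Duhamel formula via $\|U\|_{L^4}\le c\|A^{3/8}U\|_{L^2}$.
\begin{itemize}
\item For $f_1\in L^{4/3}(L^2)$ the kernel is $(t-s)^{-3/8}$.
\item For the nonlinearity $\nabla\cdot(V\otimes V)$ the kernel is $(t-s)^{-7/8}$. The same holds for $f_2$, written as $g+\nabla\cdot G$ with $g,G\in L^4(L^2)$, where $g$ is absorbed into $f_1$ on bounded intervals.
\end{itemize}
Both singularities are integrable, and Hardy--Littlewood--Sobolev gives $L^8$ in time since $1+\frac18=\frac34+\frac38=\frac14+\frac78$, with constants independent of $T_*$. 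The fixed point then closes in $L^8(0,T_*;L^4)$ alone. Afterwards $\nabla V\in L^4(L^2)$ is recovered by maximal regularity; there your remark about the critical kernel $(t-s)^{-1}$ is correct. Alternatively, you can keep Gagliardo--Nirenberg but replace $L^\infty(L^2)$ by the trace bound $L^\infty(B^{1/2}_{2,4})$ furnished by maximal regularity, and interpolate with $\dot H^1$.

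Two smaller points.
\begin{itemize}
\item On $\R^3$ the operator $A^{-1/2}$ is unbounded on $H^{-1}$, so use $(1+A)^{-1/2}$ or the splitting above.
\item Proposition \ref{summary} as stated requires $f\in L^2(L^{6/5})+L^{5/4}(L^2)$, which does not contain $L^4(H^{-1})$, and it presupposes that $V$ is already Leray--Hopf. You should first obtain $V\in L^\infty(L^2)\cap L^2(H^1)$ from energy estimates for $U$ and $W$. Then invoke the underlying Lions and Serrin arguments, which do hold for $f\in L^1(L^2)+L^2(H^{-1})$.
\end{itemize}
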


\underline{Sharpness of Theorems \ref{main2} and \ref{main3}.} We rephrase the assumptions in Proposition \ref{kozsohr} with
$$
f\in L^1(0,T_0;L^2(\R^3))\quad\mbox{and}\quad\FF<3\, .
$$
In both Theorems \ref{main2} and \ref{main3}, we have the assumption that
$$
f\in L^1_{\rm loc}(\R_+;L^2(\R^3))\quad\mbox{and}\quad\FF>3
$$
which, combined with Proposition \ref{kozsohr}, shows that they are sharp, up to the case $\FF=3$ which remains open. Indeed,
fix any $T_0>0$ and the related $0<T_*\le T_0$ from Proposition \ref{kozsohr}. Then take $0<T<T_*$ in Theorems \ref{main2} and \ref{main3}.
By \eqref{blupofL3} and \eq{blupofL3n2} we find $V\not\in L^\infty(0,T_*;L^3(\R^3))$: in fact, there may be a countable number of blow-up
instants for the $L^3(\R^3)$-norm. Moreover, Proposition \ref{supersohr} considers cases where $\FF=3$ and the stated integrability for $V$ and
$\nabla V$ is violated in Theorems \ref{main2} and \ref{main3}. Finally, concerning the blow-up conditions for $\|V(t)\|_{L^p(\R^3)}$,
see \eq{blupofL3} and \eq{blupofL3n2}, the best one can expect is to lower $p$ until $p>2$ while for $p=3$ they are both ensured
up to the critical $r$.\qed
\par\medskip
In order to comment Theorem \ref{inflating}, we collect several contributions in literature whose full (lengthy) proof
can be found in the monograph by Sohr \cite[Theorem 1.8.1, p.296]{sohr}, where a stronger result is proved.

\begin{proposition}\label{minimalf}
Let $V_0\in H^1(\R^3)$ be divergence-free and let $f\in L^2_{\rm loc}(\R_+;L^2(\R^3))$.
Let $V$ be a global Leray-Hopf solution to \eqref{NS-intro}, according to Definition \ref{LHsolutions}.\par
If $V\in L^{2q/(q-3)}_{\rm loc}(\R_+;L^q(\R^3))$ for some $q>3$, then the solution to \eqref{NS-intro} is unique and, in particular,
$\nabla V\in L^\infty_{\rm loc}(\R_+,L^2(\R^3))$; moreover, there exists an associated pressure $P$ satisfying
$\nabla P\in L^2_{\rm loc}(\R_+;L^2(\R^3))$.
\end{proposition}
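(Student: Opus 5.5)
The statement collects known results and says the full proof is in Sohr's monograph, so the plan is to reconstruct the standard argument rather than produce anything new. The structure is three steps: uniqueness via Proposition \ref{summary}, a priori control of the enstrophy via a Serrin-type energy estimate, and recovery of the pressure from the Stokes maximal-regularity estimate.

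\textbf{Step 1: uniqueness.} Since $f\in L^2_{\rm loc}(\R_+;L^2(\R^3))\subset L^{5/4}_{\rm loc}(\R_+;L^2(\R^3))$, the hypothesis of Proposition \ref{summary} is satisfied. Item $(ii)$ there then gives uniqueness of $V$ in the Leray--Hopf class, and item $(i)$ gives the energy equality. Indeed, $V\in L^{2q/(q-3)}L^q$, interpolated with $L^\infty L^2\cap L^2L^6$, yields $V\in L^4_{\rm loc}L^4$.

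\textbf{Step 2: regularity.} Fix $T_0>0$. By Proposition \ref{supersohr}, or classical local strong-solution theory with $V_0\in H^1$ and $f\in L^2L^2$, there is a local strong solution with $\nabla V\in L^\infty L^2$ and $\Delta V\in L^2L^2$. By the weak--strong uniqueness of Step 1 it coincides with $V$ on its existence interval. To continue it up to $T_0$ I would prove the a priori bound. Test the equation with $-P_\sigma\Delta V$, where $P_\sigma$ is the Helmholtz projector:
$$\frac12\frac{d}{dt}\|\nabla V\|_2^2+\|\Delta V\|_2^2\le \|f\|_2\|\Delta V\|_2+\big|\langle (V\cdot\nabla)V,\Delta V\rangle\big|.$$
For the nonlinear term, use H\"older, Gagliardo--Nirenberg ($\|\nabla V\|_{2q/(q-2)}\le C\|\nabla V\|_2^{1-3/q}\|\Delta V\|_2^{3/q}$) and Young. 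This gives
$$\big|\langle (V\cdot\nabla)V,\Delta V\rangle\big|\le \tfrac14\|\Delta V\|_2^2+C\|V\|_q^{2q/(q-3)}\|\nabla V\|_2^2.$$
Since $\|V\|_q^{2q/(q-3)}\in L^1(0,T_0)$ and $\|f\|_2^2\in L^1(0,T_0)$, Gronwall's inequality bounds $\nabla V$ in $L^\infty(0,T_0;L^2)$ and $\Delta V$ in $L^2(0,T_0;L^2)$. By the standard continuation argument the strong solution then extends to all of $[0,T_0]$. As $T_0$ is arbitrary, $\nabla V\in L^\infty_{\rm loc}(\R_+;L^2(\R^3))$.

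\textbf{Step 3: pressure.} Apply $I-P_\sigma$ to the equation and set $\nabla P=(I-P_\sigma)\big(f-(V\cdot\nabla)V\big)$; this $P$ solves \eqref{usualpressure}. Since $I-P_\sigma$ is bounded on $L^2$, it suffices to have $f\in L^2L^2$ and $(V\cdot\nabla)V\in L^2_{\rm loc}L^2$. The latter follows from
$$\|(V\cdot\nabla)V\|_2\le\|V\|_6\|\nabla V\|_3\le C\|\nabla V\|_2^{3/2}\|\Delta V\|_2^{1/2},$$
which lies in $L^4_{\rm loc}$ by Step 2.

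\textbf{Main obstacle.} The hard step is rigorously justifying the test with $-\Delta V$ and the continuation argument for a weak solution, that is, matching $V$ with the strong solution. This is where the weak--strong uniqueness from Proposition \ref{summary}$(ii)$ is indispensable. The endpoint case $q=\infty$, where the Gagliardo--Nirenberg interpolation degenerates, is not needed here because $q<\infty$ is implicit in the hypothesis.
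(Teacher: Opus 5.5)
Your reconstruction is correct and follows the standard route behind the result, which the paper does not prove but only quotes from Sohr's monograph: Serrin uniqueness, the $H^1$ energy estimate closed by Gronwall thanks to $\|V\|_{L^q(\R^3)}^{2q/(q-3)}\in L^1_{\rm loc}$, identification with the local strong solution, and recovery of $\nabla P$ via the Helmholtz projection. The one slip is citing Proposition \ref{supersohr}, which requires $V_0\in\G$ and only gives $\nabla V\in L^4L^2$; the local strong solution with $\nabla V\in L^\infty L^2$ and $\Delta V\in L^2L^2$ must instead come from the classical $H^1$-theory that you also mention.
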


Notice that in Proposition \ref{minimalf} both $V_0$ and $f$ are more regular than needed in Definition \ref{LHsolutions}.\par\medskip
\underline{Sharpness of Theorem \ref{inflating}.} By using \eq{defstrength}, Proposition \ref{minimalf} states that for some forces
satisfying $\FF=\tfrac52$, one has $\nabla V\in L^\infty_{\rm loc}(\R_+,L^2(\R^3))$. Theorem \ref{inflating} gives examples
of $f$ satisfying $\FF>\tfrac52$ (as close to $\frac52$ as wanted) for which $\nabla V\not\in L^\infty_{\rm loc}(\R_+,L^2(\R^3))$.
Since $r=q=2$ is excluded, $\nabla P\in L^2_{\rm loc}(\R_+;L^2(\R^3))$ is not ensured.\qed
\par\medskip
For any $T>0$ put
$$\W^{r,q}_T:=\big\{U\in L_{\rm 1oc}^1(\R^3\times (0,T)):\, U\in W^{1,r}(0,T;L^q_\sigma(\R^3))\cap L^r(0,T;W^{2,q}(\R^3))\big\}.$$
As a consequence of results by Giga-Sohr \cite{giga}, in Section \ref{proofsgigasohr} we prove

\begin{theorem}\label{gigasohr0}
Let $T_0>0$ and assume that
$$
f\in L^{r}(0,T_0;L^q(\R^3))\ \mbox{with}\ \frac2r+\frac3q<2\,,\quad V_0\in L_\sigma^q(\R^3)\ \mbox{with}\ \Delta V_0\in L^q(\R^3)\,.
$$
Then there exists $T_*\in (0,T_0]$ such that \eqref{NS-intro} has one and only one solution
$V\in \mathcal W^{r,q}_{T_*}$. In particular,
\neweq{EST}
V\in L^\infty(\R^3\times(0,T_*))\,.
\endeq
\end{theorem}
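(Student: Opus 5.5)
We will prove Theorem \ref{gigasohr0} by a fixed-point argument in the maximal-regularity class $\W^{r,q}_{T}$, using the $L^r$--$L^q$ maximal regularity of the Stokes operator on $\R^3$ from Giga--Sohr \cite{giga}. Let $\P$ denote the Helmholtz (Leray) projector on $L^q(\R^3)$, bounded for $1<q<\infty$, and let $A=-\P\Delta=-\Delta$ on solenoidal fields. We rewrite \eqref{NS-intro} as the abstract problem $V_t+AV=\P f-\P[(V\cdot\nabla)V]$, $V(0)=V_0$. The pressure is then recovered from \eqref{usualpressure}.

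\textbf{Linear estimate.} By \cite{giga}, for $F\in L^r(0,T;L^q_\sigma)$ and $V_0$ in the real interpolation space $(L^q_\sigma,D(A))_{1-1/r,r}$, the linear Stokes problem has a unique solution $U\in\W^{r,q}_T$. Its norm is bounded by $C(\|F\|_{L^r(0,T;L^q)}+\|V_0\|)$, with $C$ independent of $T\le T_0$ once we subtract the free evolution $e^{-tA}V_0$. The hypothesis $V_0\in L^q_\sigma$ with $\Delta V_0\in L^q$ puts $V_0$ in $D(A)=W^{2,q}\cap L^q_\sigma$, which is contained in the required trace space. So $e^{-tA}V_0\in\W^{r,q}_{T_0}$.

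\textbf{Nonlinear estimate.} This is the key step. Since $\frac2r+\frac3q<2$, we have $q>3/2$, and mixed-derivative embeddings control the nonlinearity in the space itself. Indeed, $\W^{r,q}_T\hookrightarrow C([0,T];(L^q,W^{2,q})_{1-1/r,r})$, which is a Besov space $B^{2-2/r}_{q,r}$. The condition $2-\frac2r>\frac3q$ makes this space embed in $L^\infty(\R^3)$. Together with $\nabla V\in L^r(0,T;L^q)$, this gives
\[
\|(V\cdot\nabla)V\|_{L^r(0,T;L^q)}\le \|V\|_{L^\infty(\R^3\times(0,T))}\|\nabla V\|_{L^r(0,T;L^q)}.
\]
For differences of two solutions we get the analogous bilinear bound, which yields the Lipschitz estimate. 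Where the sup-norm embedding constant depends on $T$, we avoid the issue by working with $V-e^{-tA}V_0$, which has zero trace at $t=0$, so the embedding constant stays uniform in $T$.

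\textbf{Contraction and conclusion.} Write $V=e^{-tA}V_0+w$. The term $\|\nabla e^{-tA}V_0\|_{L^r(0,T;L^q)}$ and $\|\P f\|_{L^r(0,T;L^q)}$ both tend to $0$ as $T\to0$. Hence for $T_*$ small, the map $w\mapsto$ (solution of the linear problem with right-hand side $\P f-\P[(V\cdot\nabla)V]$) is a contraction on a small ball of $\W^{r,q}_{T_*}$ with zero trace.

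This gives existence. Uniqueness in all of $\W^{r,q}_{T_*}$ follows from the difference estimate on short subintervals, combined with a continuation (Gronwall-type) argument, using that any such solution is bounded by the embedding. Finally, \eqref{EST} is exactly the embedding $\W^{r,q}_{T_*}\hookrightarrow L^\infty(\R^3\times(0,T_*))$ under $\frac2r+\frac3q<2$.

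The main obstacle is making this embedding and its $T$-uniform constant rigorous. Our first attempt will be to extend zero-trace functions to $(0,\infty)$ by reflection and apply the known trace and Besov embeddings there.
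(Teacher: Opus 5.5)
Your scheme is essentially the paper's: a contraction in the Giga--Sohr maximal-regularity class $\W^{r,q}_T$, with the nonlinearity controlled through $\W^{r,q}_T\hookrightarrow L^\infty(\R^3\times(0,T))$, which is where $\frac2r+\frac3q<2$ enters. The differences are in the bookkeeping.

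\begin{itemize}
\item The paper rescales by $e^{\lambda t}$ to work with the invertible operator $\lambda-\Delta$. It splits $V=u_S+v$, with $u_S$ the Stokes solution carrying both $f$ and $V_0$, and proves the $L^\infty$ bound (Lemma \ref{l:1}) from a localized heat-kernel representation.
\item You subtract only $e^{-tA}V_0$, keep $f$ inside the fixed-point map, and get the embedding from the trace space $B^{2-2/r}_{q,r}$.
\end{itemize}

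Your zero-trace splitting is the more careful choice. The paper gets smallness of the cross terms from $\|u_S\|_{L^\infty(\R^3\times(0,T))}\le c\|u_S\|_{\W^{r,q}_T}$ together with $\|u_S\|_{\W^{r,q}_T}\to0$ as $T\to0$. This cannot hold with $c$ independent of $T$, because $u_S(0)=V_0$: for $U(t)\equiv V_0$ one has $\|U\|_{\W^{r,q}_T}=T^{1/r}\|V_0\|_{W^{2,q}(\R^3)}$, while $\|U\|_{L^\infty}=\|V_0\|_{L^\infty}$.

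A version of the same issue reappears in your contraction step, and the two small quantities you list do not close it. Write $e:=e^{-tA}V_0$. The cross term $(e\cdot\nabla)w$ is only bounded by $\|V_0\|_{L^\infty(\R^3)}\|\nabla w\|_{L^r(0,T;L^q(\R^3))}$, and the factor $\|V_0\|_{L^\infty}$ does not shrink with $T$. If you use only $\|\nabla w\|_{L^r(0,T;L^q)}\le\|w\|_{\W^{r,q}_T}$, this term has Lipschitz constant of order $C\|V_0\|_{L^\infty}$, which nothing forces below $1$.

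You need a factor that vanishes with $T$ on zero-trace functions. From $w(t)=\int_0^t w_s\,ds$ you get $\|w\|_{L^r(0,T;L^q)}\le T\|w_t\|_{L^r(0,T;L^q)}$. The Gagliardo--Nirenberg inequality $\|\nabla w\|_{L^q}\le C\|w\|_{L^q}^{1/2}\|D^2w\|_{L^q}^{1/2}$ and H\"older in time then give $\|\nabla w\|_{L^r(0,T;L^q)}\le CT^{1/2}\|w\|_{\W^{r,q}_T}$.

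The same estimate is needed for the term $(V_1\cdot\nabla)(V_1-V_2)$ in your uniqueness/continuation argument, where $\|V_1\|_{L^\infty}$ is bounded but not small. With this added, your argument goes through.
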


\underline{Sharpness of Theorem \ref{mainLinfty}.} Theorem \ref{gigasohr0} shows that \eqref{EST} holds by assuming $\FF<2$. Theorem \ref{mainLinfty}
shows that it may happen that \eqref{EST} fails when $\FF>2$, see \eq{ipotesona}$_1$. Only the case $\FF=2$ remains open.\qed

\begin{remark}\label{sharpcaffa}
In particular, Theorem \ref{mainLinfty} states that $P\in L^{5/3}_{\rm loc}(\R_+;L^{5/3}(\R^3))$. By uniqueness, $(V,P)$ is then
the suitable solution \cite[Definition p.779]{caffa} obtained in \cite[Theorem A1]{caffa}: all the assumptions are satisfied since we can construct
$f\in L^2_{\rm loc}(\R_+;H^{-1}(\R^3))$ as in Theorem \ref{mainLinfty} and the assumption $\nabla\cdot f=0$ in \cite[Theorem A']{caffa} is not
restrictive because one may always reduce to this case thanks to the Helmholtz-Weyl decomposition in any $L^p(\R^3)$-space ($1<p<\infty$),
see \cite{fujiwara,miyakawa,simader}. Hence, Theorem \ref{mainLinfty} shows that \cite[Theorem A']{caffa} is somehow sharp: the set of singular
points $(\xi,t)$ for \eqref{NS-intro} has zero 1D-Hausdorff measure, but it may contain countable many infinite points.
Even more, Theorems \ref{mainLinftygrad} and \ref{final} show that a countable number of singularities may appear also with weaker blow-up
conditions.
\end{remark}

By weakening the assumptions and using again results by Giga-Sohr \cite{giga}, in Section \ref{proofsgigasohr} we also prove

\begin{theorem}\label{gigasohr1}
Let $T_0>0$ and assume that
$$
f\in L^{r}(0,T_0;L^q(\R^3))\ \mbox{with}\ \frac2r+\frac3q<1\,,\ \ V_0\in L_\sigma^q(\R^3)\ \mbox{with}\ \Delta V_0\in L^q(\R^3)\,.
$$
Then there exists $T_*\in (0,T_0]$ such that \eqref{NS-intro} has one and only one solution
$V\in \W^{r,q}_{T_*}$. In particular,
\neweq{ESTgrad}
\nabla V\in L^\infty(\R^3\times(0,T_*))\,.
\endeq
\end{theorem}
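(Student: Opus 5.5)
The argument follows Theorem \ref{gigasohr0}: a Giga--Sohr maximal regularity fixed point in $\W^{r,q}_{T_*}$, followed by a parabolic embedding. The stronger condition $\tfrac2r+\tfrac3q<1$ is used only to upgrade the embedding from $L^\infty$ for $V$ to $L^\infty$ for $\nabla V$.

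\textbf{Step 1 (local solvability in $\W^{r,q}_{T_*}$).} Since $\tfrac2r+\tfrac3q<1<2$, the hypotheses of Theorem \ref{gigasohr0} hold. That result gives $T_*\in(0,T_0]$ and a unique $V\in\W^{r,q}_{T_*}$ solving \eqref{NS-intro}. So existence and uniqueness are already available, and only \eqref{ESTgrad} remains. If one wants a self-contained argument, the scheme is the standard one. Write $V=e^{t\Delta}V_0+\mathcal S(\mathbb P f)-\mathcal S(\mathbb P(V\cdot\nabla)V)$, where $\mathcal S$ is the Stokes solution operator with zero initial data and $\mathbb P$ the Helmholtz projection. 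By Giga--Sohr, $\mathcal S$ is bounded from $L^r(0,T;L^q_\sigma)$ into $\W^{r,q}_T$ with constants uniform for $T\le T_0$. The condition $\Delta V_0\in L^q$ places $V_0$ in the trace space, so $e^{t\Delta}V_0\in\W^{r,q}_T$. The nonlinearity is then estimated in $L^r(0,T;L^q)$ by a small power of $T$ times $\|V\|_{\W^{r,q}_T}^2$, which makes the solution map a contraction on a ball for small $T_*$.

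\textbf{Step 2 (embedding).} The key fact is a mixed-norm embedding for maximal regularity spaces. The space $W^{1,r}(0,T;L^q)\cap L^r(0,T;W^{2,q})$ embeds into $C^0([0,T];B^{2-2/r}_{q,r}(\R^3))$; this is the trace method, valid for $r>1$, with $r>2$ automatic here since $\tfrac2r<1$. Next, $B^{2-2/r}_{q,r}\hookrightarrow W^{1,\infty}$ as soon as $2-\tfrac2r-\tfrac3q>1$, which is exactly $\tfrac2r+\tfrac3q<1$. (In the same way, for Theorem \ref{gigasohr0} the condition $2-\tfrac2r-\tfrac3q>0$ gives $L^\infty$.) Hence $\nabla V\in C^0([0,T_*];L^\infty(\R^3))$, which gives \eqref{ESTgrad}. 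Note that $q>3$ is forced by the hypothesis. To avoid Besov spaces, one can instead interpolate: $\nabla V\in W^{\theta,r}(0,T;W^{1-2\theta,q})$ for $\theta\in(0,1)$, and choose $\theta$ with $\theta r>1$ and $(1-2\theta)q>3$. Such a $\theta$ exists precisely when $\tfrac1r<\theta<\tfrac12-\tfrac3{2q}$, that is, $\tfrac2r+\tfrac3q<1$. Sobolev embedding in time and in space then yields boundedness and continuity of $\nabla V$.

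\textbf{Main obstacle.} The delicate point is the nonlinear estimate in Step 1 when $q$ is large and $r$ is close to the critical line. There I would bound $\|(V\cdot\nabla)V\|_{L^q}\le\|V\|_{L^\infty}\|\nabla V\|_{L^q}$. For the $L^\infty$ factor, Step 2's embedding applied to the $\W^{r,q}$ norm gives uniform-in-time control. The factor $\|\nabla V\|_{L^r(0,T;L^q)}$ is small for small $T$ by Hölder in time, after gaining integrability from $\nabla V\in C^0([0,T];B^{1-2/r}_{q,r})$. The control constants must be uniform in $T$, so I would work with functions vanishing at $t=0$ (subtracting the caloric extension of $V_0$), as in Giga--Sohr. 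Since Theorem \ref{gigasohr0} is assumed, however, the only genuinely new ingredient is the embedding exponent count in Step 2.
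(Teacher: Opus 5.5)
Your proof is correct, but it is organized differently from the paper's.

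The paper does not use Theorem~\ref{gigasohr0} as a black box. It reruns the same contraction in $\W^{r,q}_T$, now bounding the quadratic terms by $\bigl(\|\nabla w\|_{L^\infty}+\|\nabla u_S\|_{L^\infty}\bigr)\bigl(\|w\|_{\W^{r,q}_T}+\|u_S\|_{\W^{r,q}_T}\bigr)$. The $L^\infty$ bounds on the gradients come from the second inequality of Lemma~\ref{l:1} with $p_1=s_1=\infty$, which is exactly where $\frac2r+\frac3q<1$ enters. The paper then reads off \eqref{ESTgrad} from the same lemma.

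You instead observe that $\frac2r+\frac3q<1<2$ already places you under Theorem~\ref{gigasohr0}. Existence and uniqueness in $\W^{r,q}_{T_*}$ therefore come for free, and the stronger hypothesis is needed only to upgrade the embedding. You obtain that embedding through the trace space $B^{2-2/r}_{q,r}$ and the Besov embedding into $W^{1,\infty}$, or through mixed-derivative interpolation. This is the abstract counterpart of the paper's heat-kernel and Young/Hardy--Littlewood argument. You could equally cite Lemma~\ref{l:1} with $p_1=s_1=\infty$, whose condition coincides with yours.

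What each route buys:
\begin{itemize}
\item Your route is shorter and makes clear that Theorem~\ref{gigasohr1} is purely a regularity corollary of Theorem~\ref{gigasohr0}. It also gives the slightly stronger conclusion $\nabla V\in C^0([0,T_*];L^\infty(\R^3))$. The price is importing real-interpolation and Besov-embedding theory.
\item The paper's route stays within its elementary Lemma~\ref{l:1}, but its second fixed-point argument is redundant.
\end{itemize}

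Your closing remark about keeping the embedding constants uniform in $T$, by working with functions that vanish at $t=0$, is well taken. It addresses a point the paper's sketch of Theorem~\ref{gigasohr0} passes over: there Lemma~\ref{l:1} is applied to $u_S$, which does not vanish at $t=0$, while $T$ is shrunk. Given your reduction, however, that paragraph is not needed for your proof.
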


\underline{Sharpness of Theorems \ref{mainLinftygrad} and \ref{final}.} Theorem \ref{gigasohr1} shows that \eqref{ESTgrad} holds if $\FF<1$.
Theorem \ref{mainLinftygrad} shows that \eqref{ESTgrad} may fail when $\FF>1$, see \eq{ipotesonagrad}$_1$: therefore, it is sharp, up to the
case $\FF=1$.\par
Theorem \ref{final} states the existence of $f$ such that $\FF>0$ and $V\in C^1(\R^3\times\R_+)\setminus C^2(\R^3\times\R_+)$.
The (stationary) couple
\neweq{explicitexp}
U(\xi)=\frac{(2yz,-xz,-xy)}{(1+|\xi|^2)^{5/2}}\, ,\qquad Q(\xi)\equiv0\, ,
\endeq
satisfies $U\in C^\infty(\R^3)$ with uniformly bounded derivatives, $\nabla\cdot U=0$ in $\R^3$, and
$$
-\Delta U+(U\cdot\nabla)U+\nabla Q=\frac{35\, (2yz,-xz,-xy)}{(1+|\xi|^2)^{9/2}}-\frac{(2xy^2+2xz^2,2yz^2-yx^2,2zy^2-zx^2)}{(1+|\xi|^2)^5}=:f(\xi)
$$
so that $f\in L^\infty(\R^3)$: this gives an example where $\FF=0$ with smooth $U$.\qed

\begin{remark}\label{blupmanyStokes}
For small velocities $V$, one can linearise the Navier-Stokes equations and obtain, instead of \eqref{NS-intro}, a Cauchy problem
for the {\em linear} Stokes equations:
\begin{equation}\label{StokesCauchy}
V_t-\Delta V+\nabla P=f\, ,\quad\nabla\cdot V=0\quad\mbox{ in }\R^3\times\R_+\, ,\qquad V(\xi,0)=V_0(\xi)\quad \mbox{in }\R^3.
\end{equation}
Weak solution to \eqref{StokesCauchy} are as in Definition \ref{LHsolutions}, by omitting the nonlinearity and the energy inequality.
From Giga-Sohr \cite[Theorem 2.8]{giga}, we know that both Theorems \ref{gigasohr0} and \ref{gigasohr1} hold for \eqref{StokesCauchy}.
Since our results also hold for \eqref{StokesCauchy} (with the same proof since $(V\cdot\nabla)V$ benefits of the {\em magic cancellation}
discussed in Remark \ref{magic} below), our blow-up examples are sharp also in this linear setting.
\end{remark}

\section{Construction of the solution}\label{reconstruction}

We begin this section with a technical computation.

\begin{lemma}\label{calculus}
Let $T,\alpha,\gamma>0$, $k\in\mathbb{N}$, $p>k$. For the function $h:\R^3\times[0,T)\to\R$ defined by
$$
h(\xi,t)=\frac{(T-t)^{2\alpha}\, x^k}{\left[(T-t)^{2\gamma}+|\xi|^2\right]^{p/2}}\qquad\forall(\xi,t)\in\R^3\times[0,T)
$$
we have that
\neweq{general}
\forall q>\max\left\{1,\frac{3}{p-k}\right\}\quad\exists C_q>0\quad\mbox{s.t.}\quad\|h(t)\|_{L^q(\R^3)}^q
=C_q(T-t)^{2q\alpha+(kq+3-pq)\gamma}\quad\forall t\in[0,T)\, .
\endeq
Hence,
\neweq{iff2}
\begin{array}{rcl}
\forall r\ge1\qquad h\in L^r(0,T;L^q(\R^3)) & \Longleftrightarrow & 2q\alpha+(kq+3-pq)\gamma>-q/r\, ,\\
h\in L^\infty(0,T;L^q(\R^3)) & \Longleftrightarrow & 2q\alpha+(kq+3-pq)\gamma\ge0\, ,\\
\lim_{t\to T}\|h(t)\|_{L^q(\R^3)}=0 & \Longleftrightarrow & 2q\alpha+(kq+3-pq)\gamma>0\, .
\end{array}
\endeq
The same statements hold if $x^k$ is replaced by any homogeneous polynomial (w.r.t.\ $x,y,z$) of degree $k$.
\end{lemma}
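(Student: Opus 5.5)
The plan is a scaling argument. Fix $t\in[0,T)$ and set $\varepsilon=(T-t)^\gamma>0$, so that
$$h(\xi,t)=(T-t)^{2\alpha}\frac{x^k}{(\varepsilon^2+|\xi|^2)^{p/2}}.$$

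First I will compute $\|h(t)\|_{L^q}^q$ by the change of variables $\xi=\varepsilon\eta$, with $d\xi=\varepsilon^3d\eta$. Then $x^k=\varepsilon^k\eta_1^k$ and $(\varepsilon^2+|\xi|^2)^{p/2}=\varepsilon^p(1+|\eta|^2)^{p/2}$, which gives
$$\|h(t)\|_{L^q}^q=(T-t)^{2q\alpha}\,\varepsilon^{kq-pq+3}\int_{\R^3}\frac{|\eta_1|^{kq}}{(1+|\eta|^2)^{pq/2}}\,d\eta .$$
Calling the last integral $C_q$, we get $(T-t)^{2q\alpha+(kq+3-pq)\gamma}C_q$, which is \eqref{general}. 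It remains to check that $0<C_q<\infty$. Positivity is clear, since the integrand is nonnegative and not identically zero. The integrand is continuous, so the only issue is at infinity, where it is bounded by $|\eta|^{(k-p)q}$. This is integrable at infinity iff $(p-k)q>3$, i.e. $q>3/(p-k)$, which is exactly the hypothesis; the condition $q>1$ is only there to stay in the $L^q$ framework. This integrability check is the one substantive point of the proof; everything else is bookkeeping.

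Next, writing $\beta:=2q\alpha+(kq+3-pq)\gamma$, we have $\|h(t)\|_{L^q}=C_q^{1/q}(T-t)^{\beta/q}$. The three equivalences in \eqref{iff2} then follow from elementary facts about powers of $T-t$:
\begin{itemize}
\item $\int_0^T(T-t)^{r\beta/q}dt<\infty$ iff $r\beta/q>-1$, i.e. $\beta>-q/r$.
\item $\sup_{[0,T)}(T-t)^{\beta/q}<\infty$ iff $\beta\ge0$.
\item $(T-t)^{\beta/q}\to0$ as $t\to T$ iff $\beta>0$.
\end{itemize}
Measurability in $t$ and continuity on $[0,T)$ are immediate from the explicit formula.

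Finally, take $x^k$ replaced by a homogeneous polynomial $P_k$ of degree $k$. The same substitution gives $P_k(\varepsilon\eta)=\varepsilon^kP_k(\eta)$. The resulting constant $\int|P_k(\eta)|^q(1+|\eta|^2)^{-pq/2}d\eta$ is again finite, because $|P_k(\eta)|\le c|\eta|^k$. It is also positive provided $P_k\not\equiv0$, since a nonzero polynomial vanishes only on a null set. Hence the same exponent and the same equivalences hold.
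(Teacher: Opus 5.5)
Your proof is correct and is essentially the paper's argument. The paper passes to spherical coordinates and rescales the radius by $\rho=(T-t)^\gamma s$, which is your substitution $\xi=(T-t)^\gamma\eta$ done in one step, and it then reads the three equivalences in \eqref{iff2} off the resulting power of $T-t$ just as you do; your explicit check that $C_q$ is finite exactly because $(p-k)q>3$ (and positive when $P_k\not\equiv0$) is a detail the paper leaves implicit.
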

\begin{proof} In what follows, $C>0$ denotes suitable constants that may also vary within the same equation.
Using spherical coordinates $(\rho,\theta,\phi)$, combined with the change of variables $\rho=(T-t)^{\gamma}s$ for fixed $t\in[0,T)$, we find
\begin{align*}
\|h(t)\|_{L^q(\R^3)}^q &= (T-t)^{2q\alpha}\int_{\R^3}\frac{|x|^{kq}\ d\xi}{[(T-t)^{2\gamma}+|\xi|^2]^{pq/2}}
=C(T-t)^{2q\alpha}\int_0^\infty\frac{\rho^{kq+2}\ d\rho}{[(T-t)^{2\gamma}+\rho^2]^{pq/2}} \\
&= C(T-t)^{2q\alpha+(kq+3-pq)\gamma}\int_0^\infty\frac{s^{kq+3}\ ds}{[1+s^2]^{pq/2}}=C_q(T-t)^{2q\alpha+(kq+3-pq)\gamma}
\qquad\forall q>\tfrac{3}{p-k}\, ,
\end{align*}
which proves \eq{general}. Then also the three statements in \eq{iff2} follow.\par
Finally, after switching to spherical coordinates, the computations do not change if $x^k$ is replaced by any homogeneous polynomial of degree $k$.
\end{proof}

\subsection{The velocity vector field}

When the explicit form will not be necessary, in the sequel we denote by
\neweq{Pj}
\P_j(\xi)\mbox{ any homogeneous polynomial of degree $j\ge0$ with respect to $|\xi|$.}
\endeq
For instance, $\P_1(\xi)=|\xi|$ and $\P_1(\xi)=x$ (with an abuse of notation).\par
For some $\delta,\gamma>0$ and $\ell>7/2$ to be determined, we introduce the vector field
\neweq{sceltaconl}
V(\xi,t)=\frac{(T-t)^{2\delta}}{\left[(T-t)^{2\gamma}+|\xi|^{2}\right]^{\ell/2}}\, (2yz,-xz,-xy)
\endeq
that satisfies $\nabla\cdot V=0$ in $\R^3\times[0,T)$; the lower bound $\ell>7/2$ ensures that $V(t)\in L^2(\R^3)$ for $t\in[0,T)$.
By scaling invariance, any $\ell>7/2$ in \eqref{sceltaconl} will enable us to build the blow-up examples. As we now show, a special choice
of $\ell$ slightly simplifies the computations.

\begin{lemma}\label{defforce}
{\rm {\bf (Choice of ${\mathbf{\ell}}$).}} Assume that $\ell>7/2$, let $V$ as in \eqref{sceltaconl}, let
\neweq{finalf}
g:=V_t-\Delta V+(V\cdot\nabla)V\, .
\endeq
Then for any $(q,r)\in(1,\infty)\times[1,\infty)$,
\neweq{sumg}
g\in L^r(0,T;L^q(\R^3))\ \Longleftrightarrow\ \frac{1}{r}+\frac{3\gamma}{q}>\max\big\{(\ell-2)\gamma+1-2\delta,\ell\gamma-2\delta,(2\ell-3)\gamma
-4\delta\big\}\, .
\endeq
In particular, if $\ell=5$ and $(q,r)\in(1,\infty)\times[1,\infty)$,
\neweq{sumg22}
g\in L^r(0,T;L^q(\R^3))\ \Longleftrightarrow\ \gamma<\min\big\{
\tfrac{q}{5q-3}(2\delta+\tfrac{1}{r}),\, \tfrac{q}{7q-3}(4\delta+\tfrac{1}{r}),\, \tfrac{q}{3q-3}(2\delta-\tfrac{r-1}{r})\big\}\, .
\endeq
\end{lemma}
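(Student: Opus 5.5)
The plan is to compute the three pieces of $g$ explicitly, observe that each is a self-similar profile of the form treated in Lemma~\ref{calculus}, and read off the integrability conditions from \eqref{iff2}. Set $\varepsilon=\varepsilon(t)=(T-t)^{\gamma}$, $D=\varepsilon^2+|\xi|^2$ and $w=(2yz,-xz,-xy)$, so that $V=(T-t)^{2\delta}D^{-\ell/2}w$. I will use two elementary facts about $w$: each component is harmonic, and $w$ is homogeneous of degree $2$, so that $\xi\cdot\nabla w=2w$. I will also use $w\cdot\xi=2xyz-xyz-xyz=0$.

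\textbf{Time derivative.} Differentiating gives
$$V_t=(T-t)^{2\delta-1}D^{-\ell/2-1}\big[-2\delta D+\ell\gamma\,\varepsilon^2\big]w .$$
Both summands are of the form $h$ in Lemma~\ref{calculus} with $k=2$. The first has $(\alpha,p)=(\delta-\tfrac12,\ell)$ and the second has $(\alpha,p)=(\delta+\gamma-\tfrac12,\ell+2)$. They produce the same exponent $2q\delta-q+(2q+3-\ell q)\gamma$. By \eqref{iff2}, this piece lies in $L^r(0,T;L^q)$ iff
$$\tfrac1r+\tfrac{3\gamma}{q}>(\ell-2)\gamma+1-2\delta .$$

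\textbf{Laplacian.} By harmonicity of $w$,
$$\Delta(D^{-\ell/2}w)=\Delta(D^{-\ell/2})\,w+2\nabla D^{-\ell/2}\cdot\nabla w .$$
Inserting $\nabla D^{-\ell/2}=-\ell D^{-\ell/2-1}\xi$ and $|\xi|^2=D-\varepsilon^2$ gives
$$\Delta V=(T-t)^{2\delta}D^{-\ell/2-2}\big[\ell(\ell-5)D-\ell(\ell+2)\varepsilon^2\big]w .$$
This already explains the choice $\ell=5$: the leading term disappears. In every case the scaling exponent is $2q\delta+(3-\ell q)\gamma$, so the condition is
$$\tfrac1r+\tfrac{3\gamma}q>\ell\gamma-2\delta .$$

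\textbf{Nonlinear term.} We have
$$(V\cdot\nabla)V=(T-t)^{4\delta}\big[D^{-\ell}(w\cdot\nabla)w+D^{-\ell/2}\,(w\cdot\nabla D^{-\ell/2})\,w\big].$$
The second summand is proportional to $w\cdot\xi=0$; this is the magic cancellation. What remains is a homogeneous cubic $(w\cdot\nabla)w$, which is nonzero, with $\alpha=2\delta$, $k=3$, $p=2\ell$. This gives the condition
$$\tfrac1r+\tfrac{3\gamma}q>(2\ell-3)\gamma-4\delta .$$
In all cases the hypothesis $q>3/(p-k)$ of Lemma~\ref{calculus} reduces to spatial integrability at infinity, which is harmless for $\ell>7/2$ and $q>1$ (I will check this). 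For $\ell=5$, solving the three inequalities for $\gamma$ gives \eqref{sumg22} directly.

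\textbf{Main obstacle.} The ``$\Leftarrow$'' direction of \eqref{sumg} follows from the triangle inequality. For ``$\Rightarrow$'' I must exclude cancellations among the three pieces. My approach is to write
$$g(\xi,t)=\sum_i a_i(t)\,\Phi_i\big(\xi/\varepsilon(t)\big)$$
with pure powers $a_i(t)=c_i(T-t)^{\beta_i}$ and fixed profiles $\Phi_i$. Then $\|g(t)\|_{L^q}=\varepsilon^{3/q}\,\big\|\sum_i a_i\Phi_i\big\|_{L^q}$.

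If the exponents $\beta_i$ are distinct, the most singular term dominates as $t\to T$, and only the pieces sharing the largest singularity matter. Two observations settle the equal-exponent cases. First, $(w\cdot\nabla)w$ is a cubic while $V_t$ and $\Delta V$ are multiples of the quadratic $w$, so their combination cannot vanish identically. Second, when $V_t$ and $\Delta V$ have equal exponents, their radial coefficients are the functions $(1+s^2)^{-\ell/2-1}(-2\delta(1+s^2)+\ell\gamma)$ and $(1+s^2)^{-\ell/2-2}(\cdot)$, whose decay rates differ; a nontrivial combination is therefore a nonzero profile. With these two facts, $\|g(t)\|_{L^q}$ is comparable to the largest of the three pure powers near $t=T$, and \eqref{iff2} gives the equivalence.
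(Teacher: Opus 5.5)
You follow the paper's proof. You split $g$ into $V_t$, $\Delta V$ and $(V\cdot\nabla)V$, recognise each as a combination of functions $h$ from Lemma~\ref{calculus} with a common scaling exponent, and read off the three conditions from \eqref{iff2}. Your formulas agree with the paper's. Deriving them from $\Delta w=0$, $\xi\cdot\nabla w=2w$ and $w\cdot\xi=0$ (the last being exactly the divergence-free condition behind Remark~\ref{magic}) is cleaner than the direct computation.

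Your self-similar decomposition $g=\sum_i a_i(t)\Phi_i(\xi/\varepsilon)$ also supplies something the paper omits. The paper passes from \eqref{sumVt}, \eqref{Lap}, \eqref{VnablaV} to \eqref{sumg} without discussing cancellations, and it obtains \eqref{sumVt}, \eqref{Lap} themselves from two-term sums in the same way. Your profile argument is what actually justifies ``$\Rightarrow$''. Phrase your first observation via parity rather than degree, since the profiles are not polynomials. $\Phi_1,\Phi_2$ are radial functions times the even field $w$, while $\Phi_3$ is a radial function times the odd cubic $(w\cdot\nabla)w$. So in a vanishing combination the coefficient of $\Phi_3$ must be zero.

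One step fails, though: the claim that the hypothesis $q>3/(p-k)$ of Lemma~\ref{calculus} is harmless for every $\ell>7/2$ and $q>1$. For the first term of $V_t$ one has $(k,p)=(2,\ell)$, so the hypothesis is $q>3/(\ell-2)$. This is a real restriction when $7/2<\ell<5$.

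For such $\ell$ and $1<q\le 3/(\ell-2)$, $V_t(t)$ has leading tail $-2\delta(T-t)^{2\delta-1}|\xi|^{-\ell}w$, of size $|\xi|^{2-\ell}$. Neither $\Delta V=O(|\xi|^{-\ell})$ nor $(V\cdot\nabla)V=O(|\xi|^{3-2\ell})$ can compensate it. Hence $g(t)\notin L^q(\R^3)$ for every $t$, whereas the right-hand side of \eqref{sumg} may still hold. For instance, take $\ell=4$, $q=5/4$, $r=1$, $\delta=1$, $\gamma=1/2$: the left side is $11/5$ and the maximum is $0$.

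So \eqref{sumg} is false in that range and must be restricted to $q>\max\{1,3/(\ell-2)\}$; the paper's proof overlooks the same point. For $\ell=5$ the restriction reduces to $q>1$, all three profiles lie in $L^q(\R^3)$, and your argument proves \eqref{sumg22} completely.
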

\begin{proof} For $V$ as in \eq{sceltaconl}, we find
$$
V_t(\xi,t)=\frac{(T-t)^{2\delta-1}\, \P_2(\xi)}{\left[(T-t)^{2\gamma}+|\xi|^2\right]^{\ell/2}}+
\frac{(T-t)^{2\delta+2\gamma-1}\, \P_2(\xi)}{\left[(T-t)^{2\gamma}+|\xi|^2\right]^{1+\ell/2}}\, ,
$$
$$
\Delta V(\xi,t)=\frac{(T-t)^{2\delta+2\gamma}\, \P_{2}(\xi)+(\ell-5)(T-t)^{2\delta}\, \P_4(\xi)}{\left[(T-t)^{2\gamma}+|\xi|^2\right]^{2+\ell/2}}
\, ,\qquad (V\cdot\nabla)V(\xi,t)=\frac{(T-t)^{4\delta}\, \P_3(\xi)}{\left[(T-t)^{2\gamma}+|\xi|^2\right]^\ell}\, .
$$
We emphasise that the exponent at the denominator of $(V\cdot \nabla)V$ {\em is not} $1+\ell$ as expected when multiplying $V$ with a first
order derivative: the reason for this is a {\em magic cancellation} that occurs in computations (see Remark \ref{magic} below) and that plays
a {\em fundamental role} in our analysis. We also notice that $\Delta V$ (which does have the expected power $2+\ell/2$ at the denominator!)
has a numerator where we highlighted the coefficient $(\ell-5)$: by choosing $\ell=5>7/2$ this term vanishes and the numerator simplifies.
This choice makes $V$ in \eq{sceltaconl} similar to the Laplacian of the bubble \eq{bubble}, see \eq{anomalous} and Remark \ref{proportional}
below. But let us first conclude with the general form \eq{sumg}.\par
From \eq{iff2}$_1$ with $(\alpha,k,p,q,r)=(\delta-1/2,2,\ell,q,r)$ and
$(\alpha,k,p,q,r)=(\delta+\gamma-1/2,2,2+\ell,q,r)$, we get
\neweq{sumVt}
V_t\in L^r(0,T;L^q(\R^3))\ \Longleftrightarrow\ 2\delta-1+\left(\frac{3}{q}+2-\ell\right)\gamma>-\frac{1}{r}\, .
\endeq
From \eq{iff2}$_1$ with $(\alpha,k,p,q,r)=(\delta+\gamma,2,4+\ell,q,r)$ and $(\alpha,k,p,q,r)=(\delta,4,4+\ell,q,r)$,
we infer
\neweq{Lap}
\Delta V\in L^r(0,T;L^q(\R^3))\ \Longleftrightarrow\ 2\delta+\left(\frac{3}{q}-\ell\right)\gamma>-\frac{1}{r}\, .
\endeq
From  \eq{iff2}$_1$ with $(\alpha,k,p,q,r)=(2\delta,3,2\ell,q,r)$, we obtain
\neweq{VnablaV}
(V\cdot\nabla)V\in L^r(0,T;L^q(\R^3))\ \Longleftrightarrow\ 4\delta+\left(\frac{3}{q}+3-2\ell\right)\gamma>-\frac{1}{r}\, .
\endeq
Recalling the definition of $g$ in \eq{finalf}, from \eq{sumVt}-\eq{Lap}-\eq{VnablaV} we infer \eq{sumg} and, then, \eq{sumg22}.
\end{proof}

\begin{remark}\label{magic}
The magic cancellation that occurred while computing $(V\cdot\nabla)V$ is due to the divergence-free condition. To see this,
for some $(a,b)\in\R^2$ satisfying $ab\neq0$, consider the slightly modified vector field
$$
H(\xi,t)=\frac{(T-t)^{2\delta}}{\left[(T-t)^{2\gamma}+|\xi|^{2}\right]^{\ell/2}}\, (a\, yz,b\, xz,xy)\, ,
$$
which is divergence-free if and only if $a+b+1=0$ . The three components of the vector field $(H\cdot\nabla)H$ are
$$
\Big[(H\cdot\nabla)H\Big]^1(\xi,t)=\frac{a(T-t)^{4\delta}x}{\left[(T-t)^{2\gamma}+|\xi|^2\right]^\ell}\left(
y^2+bz^2-(a+b+1)\frac{\ell\, y^2z^2}{(T-t)^{2\gamma}+|\xi|^2}\right)\, ,
$$
$$
\Big[(H\cdot\nabla)H\Big]^2(\xi,t)=\frac{b(T-t)^{4\delta}y}{\left[(T-t)^{2\gamma}+|\xi|^2\right]^\ell}\left(
x^2+az^2-(a+b+1)\frac{\ell\, x^2z^2}{(T-t)^{2\gamma}+|\xi|^2}\right)\, ,
$$
$$
\Big[(H\cdot\nabla)H\Big]^3(\xi,t)=\frac{(T-t)^{4\delta}z}{\left[(T-t)^{2\gamma}+|\xi|^2\right]^\ell}\left(
bx^2+ay^2-(a+b+1)\frac{\ell\, x^2y^2}{(T-t)^{2\gamma}+|\xi|^2}\right)\, ,
$$
whose denominators have exponent $\ell$ (and not $\ell+1$) if and only $a+b+1=0$.
\end{remark}

The proof of Lemma \ref{defforce} suggests to take $\ell=5$ and, then, \eqref{sceltaconl} becomes
\neweq{scelta}
\ell=5\ \Longrightarrow\ V(\xi,t)=\frac{(T-t)^{2\delta}\, (2yz,-xz,-xy)}{\left[(T-t)^{2\gamma}+|\xi|^{2}\right]^{5/2}}
\endeq
so that, by  \eq{iff2}$_2$ with $(\alpha,k,p)=(\delta,2,5)$
\neweq{Vrq}
V\in L^r(0,T;L^q(\R^3))\ \Longleftrightarrow\ 2\delta+\frac{1}{r}>3\left(1-\frac{1}{q}\right)\gamma\, ,
\endeq
while by \eq{general} with $(\alpha,k,p)=(\delta,2,5)$ we find
\neweq{VLqq}
\|V(t)\|_{L^q(\R^3)}^q=C_q(T-t)^{2q\delta-3(q-1)\gamma}\qquad\forall q>1\, .
\endeq

We then put
\neweq{V0explicit}
V_0(\xi):=V(\xi,0)=\frac{T^{2\delta}\, (2yz,-xz,-xy)}{\left[T^{2\gamma}+|\xi|^2\right]^{5/2}}
\endeq
and we claim that $V_0\in\G_\sigma$, as defined in \eq{spaceG}. Indeed, the divergence-free condition is a consequence of \eq{scelta}
with $t=0$, while the integrability of $V_0$ follows from \eq{VLqq} with $t=0$ and $4\delta\ge3\gamma$. Moreover,
$$
\nabla V_0(\xi)=\frac{T^{2\delta}\, \P_1(\xi)}{\left[T^{2\gamma}+|\xi|^2\right]^{5/2}}+
\frac{T^{2\delta}\, \P_3(\xi)}{\left[T^{2\gamma}+|\xi|^2\right]^{7/2}}
$$
so that $\nabla V_0\in L^q(\R^3)$ for all $q\ge2$ by using \eq{general}. The $L^\infty$-bounds are trivial. The decay at infinity increases
for higher order derivatives, showing that $V_0\in\G_\sigma$.\par
By \eq{VLqq} and \eq{iff2}$_2$ with $(\alpha,k,p,q)=(\delta,2,5,2)$,
$$
V\in L^\infty(0,T;L^2(\R^3))\ \Longleftrightarrow\ 4\delta\ge3\gamma\, .
$$

With some care, we compute the spatial derivatives of $V$ and find
$$
v^1_x=\frac{-10xyz(T-t)^{2\delta}}{\left[(T-t)^{2\gamma}+|\xi|^2\right]^{7/2}},\quad
v^1_y=\frac{2z(T-t)^{2\delta}}{\left[(T-t)^{2\gamma}+|\xi|^2\right]^{5/2}}-
\frac{10y^2z(T-t)^{2\delta}}{\left[(T-t)^{2\gamma}+|\xi|^2\right]^{7/2}},\quad v^1_z(x,y,z,t)=v^1_y(x,z,y,t).
$$
Similarly, for the spatial partial derivatives of $v^2$ and $v^3$. Therefore,
\neweq{partialr}
|\nabla V(\xi,t)|^2=\frac{(T-t)^{4\delta+4\gamma}\, \P_2(\xi)+(T-t)^{4\delta+2\gamma}\,
\P_4(\xi)+(T-t)^{4\delta}\, \P_6(\xi)}{\left[(T-t)^{2\gamma}+|\xi|^2\right]^7}\qquad\forall t\in[0,T)\, .
\endeq
Moreover, by applying \eq{iff2}$_2$ with $(\alpha,k,p)=(\delta,3,7)$ and $(\alpha,k,p)=(\delta,1,5)$, we obtain
\neweq{partialrq}
\nabla V\in L^r(0,T;L^q(\R^3))\ \Longleftrightarrow\ 2\delta+\frac{1}{r}>\left(4-\frac{3}{q}\right)\gamma
\endeq
which yields, in particular, $\nabla V\in L^2(0,T;L^2(\R^3))\Leftrightarrow4\delta+1>5\gamma$. We have so proved

\begin{lemma}\label{existencecond}
{\rm {\bf (Conditions for existence).}} Let $\delta,\gamma>0$ satisfy $4\delta\ge3\gamma$ and $4\delta+1>5\gamma$. Then,
for any $T>0$ the vector field $V$ in \eqref{scelta} satisfies \eqref{NS-intro}$_2$ and $V\in L^\infty(0,T;L^2(\R^3))\cap L^2(0,T;H^1(\R^3))$.
\end{lemma}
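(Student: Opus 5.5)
The lemma follows by assembling the computations made just before its statement; the plan has three parts: solenoidality, the $L^\infty(0,T;L^2)$ bound, and the $L^2(0,T;H^1)$ bound.

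\emph{Solenoidality.} We verify \eqref{NS-intro}$_2$ directly. Write $V=\phi(|\xi|,t)\,(2yz,-xz,-xy)$ with $\phi=(T-t)^{2\delta}[(T-t)^{2\gamma}+|\xi|^2]^{-5/2}$. Then
$$\nabla\cdot V=\nabla\phi\cdot(2yz,-xz,-xy)+\phi\,\nabla\cdot(2yz,-xz,-xy).$$
The polynomial field $(2yz,-xz,-xy)$ is divergence-free, since no component depends on its own variable. Because $\phi$ is radial, $\nabla\phi=\phi'(|\xi|)\,\xi/|\xi|$, and
$$\xi\cdot(2yz,-xz,-xy)=2xyz-xyz-xyz=0.$$
Hence $\nabla\cdot V=0$ in $\R^3\times[0,T)$.

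\emph{Bound in $L^\infty(0,T;L^2)$.} We use \eqref{VLqq} with $q=2$, i.e.\ Lemma~\ref{calculus} with $(\alpha,k,p,q)=(\delta,2,5,2)$; the requirement $q>3/(p-k)=1$ is met. This gives
$$\|V(t)\|_{L^2(\R^3)}^2=C_2(T-t)^{4\delta-3\gamma}.$$
By \eqref{iff2}$_2$, the right-hand side is bounded on $[0,T)$ exactly when $4\delta\ge3\gamma$, which is the first hypothesis. Replacing $x^2$ by the homogeneous quadratic polynomials in the components is allowed by the last sentence of Lemma~\ref{calculus}.

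\emph{Bound in $L^2(0,T;H^1)$.} Since $V\in L^\infty(0,T;L^2)\subset L^2(0,T;L^2)$ on the bounded interval $(0,T)$, only $\nabla V\in L^2(0,T;L^2)$ remains. We start from the expression \eqref{partialr} for $|\nabla V|^2$, or equivalently from the two types of terms in $\nabla V$: $(T-t)^{2\delta}\P_1/[\cdot]^{5/2}$ and $(T-t)^{2\delta}\P_3/[\cdot]^{7/2}$. For each term we apply \eqref{general} with $q=2$, using $(\alpha,k,p)=(\delta,1,5)$ and $(\delta,3,7)$ respectively; in both cases $q>3/(p-k)=3/4$ holds. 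Both give the same time exponent,
$$\|\cdot\|_{L^2}^2\sim (T-t)^{4\delta+(2+3-10)\gamma}=(T-t)^{4\delta-5\gamma},$$
and for the second term $6+3-14=-5$ as well. So $\|\nabla V(t)\|_{L^2}^2\le C(T-t)^{4\delta-5\gamma}$, and
$$\int_0^T (T-t)^{4\delta-5\gamma}\,dt<\infty \iff 4\delta-5\gamma>-1,$$
which is the second hypothesis. This is \eqref{partialrq} with $r=q=2$.

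No genuine obstacle arises; the only point needing care is the bound for $\nabla V$. Individual terms of $\nabla V$ could in principle cancel, so from below only a lower bound up to constants is available. For the sufficiency claim here, however, the upper bounds from the triangle inequality are enough. Finally, $V(t)$ is smooth for each $t<T$, so it is measurable in time and the Bochner spaces make sense.
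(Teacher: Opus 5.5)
Your proposal is correct and follows the paper's argument. You use the divergence-free structure of \eqref{scelta}, and you get $V\in L^\infty(0,T;L^2)$ from \eqref{VLqq}--\eqref{iff2} with $(\alpha,k,p,q)=(\delta,2,5,2)$. You then get $\nabla V\in L^2(0,T;L^2)$ from Lemma~\ref{calculus} applied to the two types of terms in $\nabla V$, with $(\delta,1,5)$ and $(\delta,3,7)$, which is exactly \eqref{partialrq} with $r=q=2$. Your explicit check that $\nabla\cdot V=0$ via $\xi\cdot(2yz,-xz,-xy)=0$ simply fills in a step the paper asserts without computation.
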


This means that $V$ in \eqref{scelta} is a candidate velocity to weakly solve \eqref{NS-intro} with $V_0$ as in \eq{V0explicit}.\par
So far, $(\delta,\gamma)$ {\em are not fixed}, they will be chosen depending on the result that we need to prove.
For $V$ as in \eq{scelta}, we determine conditions on $\delta,\gamma>0$ that ensure suitable integrability properties.

\begin{lemma}\label{allterms}
{\rm {\bf (Conditions for uniqueness and energy equality).}}
Besides the assumptions in Lemma \ref{existencecond}, assume that $\delta,\gamma>0$ satisfy $4\delta+1>6\gamma$.
Then the vector field $V$ in \eqref{scelta} satisfies
\neweq{foruniq}
V\in L^4(0,T;L^4(\R^3))\qquad\mbox{and}\qquad
\exists\overline{q}>3\quad\mbox{s.t.}\quad V\in L^{2q/(q-3)}(0,T;L^q(\R^3))\quad\forall q>\overline{q}.
\endeq
\end{lemma}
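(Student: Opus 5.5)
Both claims in \eqref{foruniq} follow directly from the integrability criterion \eqref{Vrq}: $V\in L^r(0,T;L^q(\R^3))$ if and only if $2\delta+\frac1r>3\left(1-\frac1q\right)\gamma$. This criterion was obtained from \eqref{iff2}$_1$ with $(\alpha,k,p)=(\delta,2,5)$, and it requires only $q>3/(5-2)=1$, so it applies to every exponent we need. The plan is to check each claim by substituting the relevant $(r,q)$ into \eqref{Vrq}.

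For the first claim we take $r=q=4$. Then \eqref{Vrq} becomes $2\delta+\frac14>\frac94\gamma$, that is, $8\delta+1>9\gamma$. We deduce this from the hypotheses. Doubling $4\delta+1>6\gamma$ gives $8\delta+2>12\gamma$, which is not directly the inequality we want, so we combine hypotheses instead. Adding $4\delta+1>6\gamma$ and $4\delta\ge3\gamma$ yields $8\delta+1>9\gamma$. Hence $V\in L^4(0,T;L^4(\R^3))$.

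For the second claim we take $r=2q/(q-3)$ with $q>3$. Then $\frac1r=\frac{q-3}{2q}=\frac12-\frac{3}{2q}$, and \eqref{Vrq} reads
$$2\delta+\frac12-\frac{3}{2q}>3\gamma-\frac{3\gamma}{q}.$$
Multiplying by $2$ gives $4\delta+1-6\gamma>\frac{3}{q}(1-2\gamma)$. By hypothesis the left-hand side is a fixed positive number $\eta:=4\delta+1-6\gamma>0$. If $1-2\gamma\le0$, the inequality holds for every $q>3$, and we may take any $\overline q>3$. If $1-2\gamma>0$, the inequality holds whenever $q>3(1-2\gamma)/\eta$, so we set $\overline q:=\max\{4,\,3(1-2\gamma)/\eta\}$. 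In both cases $\overline q>3$, and the integrability holds for all $q>\overline q$.

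The argument is elementary, and no step is a real obstacle. The only point that needs care is that \eqref{Vrq} characterises membership in $L^r$ on $(0,T)$ for $r\ge1$. The exponent $2q/(q-3)$ is at least $2$ for $q>3$, so this condition is satisfied. Finally, the time-integrability follows from the explicit power law \eqref{VLqq}, and the only possible singularity there is at $t=T$.
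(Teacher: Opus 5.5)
Your proof is correct and follows the paper's own argument: both claims are read off from \eqref{Vrq}. The first uses $8\delta+1\ge 4\delta+1+3\gamma>9\gamma$, and the second uses the equivalent form $(4\delta+1-6\gamma)q>3-6\gamma$. Your explicit choice of $\overline{q}$ only spells out what the paper leaves implicit.
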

\begin{proof} We first notice that the additional assumption $4\delta+1>6\gamma$ implies $4\delta+1>5\gamma$ from
Lemma \ref{existencecond} and, hence, from now on we may omit the latter.\par
From \eq{Vrq} with $(q,r)=(4,4)$ we infer that
\neweq{L4L4}
V\in L^4(0,T;L^4(\R^3))\ \Longleftrightarrow\ 8\delta+1>9\gamma\, .
\endeq
But, since $4\delta\ge3\gamma$ is assumed in Lemma \ref{existencecond}, we have that
\neweq{maledetta}
8\delta+1\ \stackrel{[4\delta\ge3\gamma]}{\ge}\ 4\delta+1+3\gamma\ \stackrel{[4\delta+1>6\gamma]}{>}\ 9\gamma\, ,
\endeq
the condition on the right of \eq{L4L4} is satisfied and, in turn, also the condition on the left; this proves \eq{foruniq}$_1$.\par
For $q>3$, from \eq{Vrq} with $r=2q/(q-3)$, we infer
\neweq{q}
V\in L^{2q/(q-3)}(0,T;L^q(\R^3))\ \Longleftrightarrow\ \big(4\delta+1-6\gamma\big)q>3-6\gamma\, .
\endeq
Since $4\delta+1>6\gamma$, this proves \eq{foruniq}$_2$.\par
It is well-known \cite[p.28]{Galdi-evol} that the condition ensuring uniqueness ($4\delta+1>6\gamma$ in our case) implies the
condition ensuring the energy equality ($8\delta+1>9\gamma$ in our case), see \eq{L4L4} and \eq{maledetta}.\end{proof}

By Proposition \ref{summary}, Lemma \ref{allterms} strengthens Lemma \ref{existencecond}: the vector $V$ in \eq{scelta} is not only a
candidate weak solution to \eq{NS-intro}, but also the {\em unique} candidate Leray-Hopf solution to \eq{NS-intro} for some
$f$ to be determined.\par
The next step is to tackle the blow-up issue in several different forms.

\begin{lemma}\label{wekandblup}
{\rm {\bf (Conditions for blow-up).}}
Under the assumptions of Lemma \ref{allterms} (and \ref{existencecond}), for $V$ as in \eqref{scelta},
we have that:
\neweq{blupL3}
\mbox{if }q>2\mbox{ then }\lim_{t\to T}\|V(t)\|_{L^q(\R^3)}=+\infty\ \Longleftrightarrow\ 2q\delta<3(q-1)\gamma\, ;
\endeq
\neweq{blupenstrophy}
\lim_{t\to T}\|\nabla V(t)\|_{L^2(\R^3)}=+\infty\ \Longleftrightarrow\ 4\delta<5\gamma\, ;
\endeq
\neweq{blupgrad4}
\liminf_{t\to T}\, \sqrt[4]{T-t}\, \|\nabla V(t)\|_{L^2(\R^3)}>0\ \Longleftrightarrow\ 8\delta+1\le10\gamma\, ;
\endeq
\neweq{lastbutnotleast}
\mbox{if }\delta\le\frac12\mbox{ then }V\not\in L^8(0,T;L^4(\R^3))\mbox{ and }\nabla V\not\in L^4(0,T;L^2(\R^3))
\ \Longleftrightarrow\ 8\delta+1\le10\gamma\, .
\endeq
\end{lemma}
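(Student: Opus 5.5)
All four equivalences follow from the exact scaling identities already available: \eqref{VLqq} for $\|V(t)\|_{L^q}$ and \eqref{general} applied term by term to \eqref{partialr} for the enstrophy. The plan is to express every quantity as a power of $(T-t)$ and read off the sign of the exponent.

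For \eqref{blupL3}, I use \eqref{VLqq}:
\[
\|V(t)\|_{L^q}=C_q^{1/q}(T-t)^{2\delta-3(q-1)\gamma/q}.
\]
This tends to $+\infty$ as $t\to T$ if and only if the exponent is negative, that is, $2q\delta<3(q-1)\gamma$. The restriction $q>2$ only serves to stay in the range where \eqref{VLqq} is valid and the statement is non-trivial.

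For the enstrophy, I integrate \eqref{partialr} over $\R^3$. Lemma \ref{calculus} applies to each of the three terms of $|\nabla V|^2$ with $q=1$ and denominator exponent $p=14$. The $\P_{2j}$ term, $j=1,2,3$, carries the prefactor $(T-t)^{4\delta+2(3-j)\gamma}$. It therefore contributes
\[
(T-t)^{4\delta+2(3-j)\gamma+(2j+3-14)\gamma}=(T-t)^{4\delta-5\gamma}.
\]
All three terms scale identically, as they must by the self-similar structure of $V$. Since each is nonnegative, or at least the total is a positive constant times this power because $\|\nabla V(t)\|^2_{L^2}=\|\nabla V(0)\|^2_{L^2}$ rescaled, I obtain
\[
\|\nabla V(t)\|_{L^2}^2=C(T-t)^{4\delta-5\gamma}, \qquad C>0.
\]
A cleaner route to this identity is to substitute $\xi=(T-t)^\gamma\eta$ directly in $V$, which gives
\[
V(\xi,t)=(T-t)^{2\delta-3\gamma}\,U\bigl(\xi/(T-t)^\gamma\bigr)
\]
with $U$ fixed; this yields both \eqref{VLqq} and the gradient formula at once. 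The gradient formula gives \eqref{blupenstrophy} immediately. It also gives
\[
\sqrt[4]{T-t}\,\|\nabla V(t)\|_{L^2}=C(T-t)^{(1+8\delta-10\gamma)/4},
\]
whose $\liminf$ is positive exactly when $8\delta+1-10\gamma\le0$, proving \eqref{blupgrad4}.

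For \eqref{lastbutnotleast}, I again use the exact powers. By the previous step,
\[
\|\nabla V(t)\|_{L^2}^4=C(T-t)^{8\delta-10\gamma},
\]
which is integrable on $(0,T)$ if and only if $8\delta-10\gamma>-1$. Hence $\nabla V\notin L^4(0,T;L^2)$ if and only if $8\delta+1\le10\gamma$. For the $L^8L^4$ norm, \eqref{Vrq} with $(r,q)=(8,4)$ gives $V\in L^8(0,T;L^4)$ if and only if $2\delta+\tfrac18>\tfrac94\gamma$, that is, $16\delta+1>18\gamma$. This is not literally the same threshold as $8\delta+1>10\gamma$, and reconciling the two is the main obstacle, where the hypothesis $\delta\le\tfrac12$ and the standing assumptions of Lemmas \ref{existencecond}--\ref{allterms} must enter.

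Two implications are needed. First, $16\delta+1\le18\gamma$ implies $8\delta+1\le10\gamma$: from $18\gamma\ge16\delta+1$ I get
\[
10\gamma\ge\tfrac{10}{18}(16\delta+1)=8\delta+1+\tfrac{8\delta-8}{18}\cdot\ldots,
\]
and I would do this linear algebra carefully. Conversely, given $8\delta+1\le10\gamma$, I need $16\delta+1\le18\gamma$. I expect the right reading is that the two non-integrability statements are asserted jointly, each governed by its own linear inequality, and that within the admissible region the lines $8\delta+1=10\gamma$ and $16\delta+1=18\gamma$, which meet at $\delta=\tfrac12$, order as required for $\delta\le\tfrac12$. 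Indeed, at $\delta\le\tfrac12$ one has $\tfrac{16\delta+1}{18}\le\tfrac{8\delta+1}{10}$, since this is equivalent to $160\delta+10\le144\delta+18$, i.e.\ $\delta\le\tfrac12$. So $8\delta+1\le10\gamma$ implies $16\delta+1\le18\gamma$, giving $V\notin L^8L^4$ as well. The reverse direction follows trivially from the $\nabla V$ part, since joint failure includes failure of $\nabla V\in L^4L^2$. I would verify this ordering carefully, as it is the only nontrivial point.
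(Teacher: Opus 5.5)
Your proposal is correct and follows the paper's proof: everything is read off from the exact powers $\|V(t)\|_{L^q}^q=C_q(T-t)^{2q\delta-3(q-1)\gamma}$ and $\|\nabla V(t)\|_{L^2}^2=C(T-t)^{4\delta-5\gamma}$, and for \eqref{lastbutnotleast} the joint non-membership is equivalent to $\gamma\ge\max\{\frac{16\delta+1}{18},\frac{8\delta+1}{10}\}$, which equals $\frac{8\delta+1}{10}$ precisely when $\delta\le\frac12$. Your self-similar form $V=(T-t)^{2\delta-3\gamma}U(\xi/(T-t)^{\gamma})$ is a clean way to see that $C=\|\nabla U\|_{L^2}^2>0$, even though the individual terms in \eqref{partialr} need not be sign-definite; just drop the abandoned ``first implication'' $16\delta+1\le18\gamma\Rightarrow8\delta+1\le10\gamma$, which the joint statement does not need and which is in fact false for $\delta<\frac12$.
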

\begin{proof} From \eq{VLqq} we deduce \eq{blupL3}. As a consequence of Lemma \ref{calculus} and \eq{partialr}, we find
\neweq{nablaV2}
\|\nabla V(t)\|_{L^2(\R^3)}^2=C(T-t)^{4\delta-5\gamma}\qquad\forall t\in[0,T)\, ;
\endeq
hence, we obtain \eq{blupenstrophy}.
Moreover, we infer \eq{blupgrad4} from \eq{nablaV2}. Finally, by \eq{Vrq} and \eq{partialrq},
$$
V\not\in L^8(0,T;L^4(\R^3))\mbox{ and }\nabla V\not\in L^4(0,T;L^2(\R^3))
\ \Longleftrightarrow\ \gamma\ge\max\left\{\frac{16\delta+1}{18},\frac{8\delta+1}{10}\right\}=\frac{8\delta+1}{10}\, ,
$$
the last equality being true whenever $\delta\le1/2$: this proves \eq{lastbutnotleast}.
\end{proof}

When $f=0$, it is known that the three conditions \eq{blupL3} (with $q=3$), \eq{blupenstrophy}, \eq{blupgrad4} are equivalent,
see \cite[p.246, p.227]{leray}, \cite[Theorems 6.4 and 7.3]{Galdi-evol}, \cite{escauriaza,seregin}.
In general, both \eq{blupL3} (with $q\le6$) and \eq{blupgrad4} imply \eq{blupenstrophy}.
Condition \eq{blupL3} is related to Proposition \ref{kozsohr} and will be analysed in Sections \ref{seiquinti} and \ref{prooftheomain3}.
Condition \eqref{blupenstrophy} will be analysed in Section \ref{sharpsohr}.
Quite surprisingly, \eqref{blupgrad4} is equivalent to \eq{lastbutnotleast}, at least when $\delta\le1/2$, a bound that will be
satisfied in the proofs of Theorems \ref{main2} and \ref{main3}.\par
Assuming the blow-up condition \eqref{blupL3} when $q=3$, Lemmas \ref{existencecond}-\ref{allterms}-\ref{wekandblup}
all hold whenever
\neweq{capital}
0<\delta<\gamma<\min\left\{\frac{4\delta}{3},\frac{4\delta+1}{6}\right\}\, .
\endeq
This defines a nonempty open triangle that will change during the course, both enlarging it or shrinking it, depending on the result that
we need to prove. The first step is to shrink it, after particularising Lemma \ref{defforce} with the following statement.

\begin{lemma}\label{defforce2}
{\rm {\bf (Integrability of ${\mathbf{g}}$).}} Assume \eqref{capital}, let $V$ be as in \eqref{scelta},
let $g$ be as in \eqref{finalf}. Then,
\neweq{sumg1}
g\in L^2(0,T;L^{6/5}(\R^3))\ \Longleftrightarrow\ \frac13<\delta<\gamma<\min\left\{\frac{4\delta+1}{6},4\delta-1,\frac12\right\} \, ,
\endeq
\neweq{sumg2}
g\in L^{5/4}(0,T;L^2(\R^3))\ \Longleftrightarrow\ \frac25<\delta<\gamma<
\min\left\{\frac{4\delta+1}{6},\frac43 \delta-\frac{2}{15},\frac{1}{2}\right\}\, .
\endeq
\end{lemma}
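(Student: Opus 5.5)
The plan is to specialise the characterisation \eqref{sumg22} of Lemma \ref{defforce} (valid since $\ell=5$) to the two pairs $(q,r)=(6/5,2)$ and $(q,r)=(2,5/4)$. Then I intersect the resulting constraints on $\gamma$ with the triangle \eqref{capital}, and remove every inequality that turns out to be redundant. Both equivalences follow this way; there is no analytic content beyond \eqref{sumg22}, only bookkeeping with linear inequalities in $(\delta,\gamma)$.

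For \eqref{sumg1}, insert $q=6/5$ and $r=2$ into \eqref{sumg22}. The three coefficients are $q/(5q-3)=2/5$, $q/(7q-3)=2/9$ and $q/(3q-3)=2$. Hence $g\in L^2(0,T;L^{6/5}(\R^3))$ if and only if
$$
\gamma<\min\Big\{\tfrac{4\delta+1}{5},\ \tfrac{8\delta+1}{9},\ 4\delta-1\Big\}.
$$
Combining with \eqref{capital}, the admissible set is described by $\delta<\gamma<\min\{\tfrac{4\delta}{3},\tfrac{4\delta+1}{6},\tfrac{4\delta+1}{5},\tfrac{8\delta+1}{9},4\delta-1\}$. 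I then simplify the minimum as follows.
\begin{itemize}
\item $\delta<4\delta-1$ forces $\delta>1/3$.
\item $\delta<\tfrac{4\delta+1}{6}$ forces $\delta<1/2$, and then $\gamma<\tfrac{4\delta+1}{6}<\tfrac12$. So the bound $\gamma<1/2$ may be added to the minimum harmlessly, as in the statement.
\item $\tfrac{4\delta+1}{6}<\tfrac{4\delta+1}{5}$ always holds.
\item $\tfrac{4\delta+1}{6}\le\tfrac{8\delta+1}{9}$ if and only if $\delta\ge1/4$.
\item $\tfrac{4\delta+1}{6}\le\tfrac{4\delta}{3}$ if and only if $\delta\ge1/4$.
\end{itemize}
Since $\delta>1/3$, the terms $\tfrac{4\delta}{3}$, $\tfrac{4\delta+1}{5}$ and $\tfrac{8\delta+1}{9}$ drop out, and exactly \eqref{sumg1} remains. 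Conversely, every pair in the right-hand set of \eqref{sumg1} satisfies \eqref{capital} and the three conditions above, by the same comparisons.

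For \eqref{sumg2}, take $q=2$ and $r=5/4$, so that $1/r=4/5$. The coefficients are now $2/7$, $2/11$ and $2/3$, and \eqref{sumg22} becomes
$$
\gamma<\min\Big\{\tfrac27\big(2\delta+\tfrac45\big),\ \tfrac{2}{11}\big(4\delta+\tfrac45\big),\ \tfrac43\delta-\tfrac2{15}\Big\}.
$$
The lower bound $\delta<\gamma$ from \eqref{capital} together with $\gamma<\tfrac43\delta-\tfrac2{15}$ gives $\delta>2/5$. As before, $\delta<\tfrac{4\delta+1}{6}$ gives $\delta<1/2$, so that $\gamma<1/2$ may again be added. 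On the interval $\delta\in(2/5,1/2)$ I check that $\tfrac{4\delta+1}{6}$ is smaller than both $\tfrac27(2\delta+\tfrac45)$ and $\tfrac2{11}(4\delta+\tfrac45)$. All three functions are affine in $\delta$, so it suffices to compare them at the endpoints:
\begin{itemize}
\item at $\delta=2/5$ the three values are $\approx0.433$, $0.457$ and $0.436$ respectively;
\item at $\delta=1/2$ they are $0.5$, $\approx0.514$ and $\approx0.509$.
\end{itemize}
The term $\tfrac{4\delta}{3}$ is dominated by $\tfrac{4\delta+1}{6}$ since $\delta\ge1/4$. What remains is exactly \eqref{sumg2}.

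The only delicate point is making sure that no needed constraint is lost in the redundancy checks, which is why I verify each dropped bound on the full range of $\delta$. Since one side in each comparison is $\tfrac{4\delta+1}{6}$, the comparisons are all linear. Nonemptiness of the two sets is not part of the claim, but it is easily confirmed: $\delta$ slightly above $1/3$, respectively $2/5$, with $\gamma$ slightly above $\delta$, satisfies both descriptions.
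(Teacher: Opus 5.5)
Your proposal is correct and follows essentially the paper's argument. The paper reads off the three per-term conditions directly from \eqref{Vt}--\eqref{summable}: $\gamma<4\delta-1$, $5\gamma<4\delta+1$, $9\gamma<8\delta+1$ in the first case, and $\gamma<\frac43\delta-\frac2{15}$, $\gamma<\frac47\delta+\frac8{35}$, $\gamma<\frac8{11}\delta+\frac8{55}$ in the second. These are exactly what you get by specialising \eqref{sumg22}, and the paper then discards the redundant linear constraints under \eqref{capital} just as you do, with your endpoint comparisons on $(\frac25,\frac12)$ correctly completing the elimination in both directions.
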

\begin{proof} For $V$ as in \eq{scelta} and arguing as in the proof of Lemma \ref{defforce}, we obtain
\neweq{Vt}
V_t(\xi,t)=\frac{(T-t)^{2\delta-1}\, \P_2(\xi)}{\left[(T-t)^{2\gamma}+|\xi|^2\right]^{5/2}}+
\frac{(T-t)^{2\delta+2\gamma-1}\, \P_2(\xi)}{\left[(T-t)^{2\gamma}+|\xi|^2\right]^{7/2}}\, ,
\endeq
\neweq{summable}
\Delta V(\xi,t)=\frac{(T-t)^{2\delta+2\gamma}\, \P_{2}(\xi)}{\left[(T-t)^{2\gamma}+|\xi|^2\right]^{9/2}}
\, ,\qquad (V\cdot\nabla)V(\xi,t)=\frac{(T-t)^{4\delta}\, \P_3(\xi)}{\left[(T-t)^{2\gamma}+|\xi|^2\right]^5}\, .
\endeq

From \eq{Vt} we infer
\neweq{blup4}
4\delta>\gamma+1\ \Longleftrightarrow\ V_t\in L^2(0,T;L^{6/5}(\R^3))\, .
\endeq
From \eq{summable} we infer
\neweq{blup2}
4\delta+1>6\gamma\ \Longrightarrow\ 4\delta+1>5\gamma\ \Longleftrightarrow\ \Delta V\in L^2(0,T;L^{6/5}(\R^3))\, .
\endeq
The constraint $4\delta+1>5\gamma$ already appears in Lemma \ref{existencecond}, as expected because
similar arguments apply for the linear Stokes parabolic equation. From \eq{VnablaV} and \eq{summable} we infer
\neweq{blup3}
8\delta+1>9\gamma\ \Longleftrightarrow\ (V\cdot\nabla)V\in L^2(0,T;L^{6/5}(\R^3))\, ,
\endeq
and the inequality on the left is satisfied in view of \eq{maledetta}: it already appears in \eq{L4L4}, which
had to be expected since the proof of Proposition \ref{summary} $(ii)$ is based on the fact that one can take $\phi=V$ in \eq{weakform}.\par
If the inequality in \eq{blup4} holds, we may complement \eq{capital} with bounds for $\gamma$ and $\delta$:
\neweq{lbgamma2}
\left.
\begin{array}{rcl}
4\gamma+1\ \stackrel{\eqref{capital}}{>}\ 4\delta+1\ \stackrel{\eqref{capital}}{>}\ 6\gamma & \Longrightarrow & \delta<\gamma<\frac12\\
4\delta\ \stackrel{\eqref{blup4}}{>}\ \gamma+1\ \stackrel{\eqref{capital}}{>}\ \delta+1 & \Longrightarrow & \frac13<\delta<\gamma
\end{array}\right\}\ \Longrightarrow\ \frac13<\delta<\gamma<\frac12\, .
\endeq
Putting together \eq{blup4}-\eq{blup2}-\eq{blup3}-\eq{lbgamma2}, we infer \eq{sumg1}.\par
From \eq{Vt} we infer
\neweq{blup7}
V_t\in L^{5/4}(0,T;L^2(\R^3))\ \Longleftrightarrow\ \gamma<\frac43 \delta-\frac{2}{15}\, .
\endeq
From and \eq{summable} we infer
\neweq{blup5}
\Delta V\in L^{5/4}(0,T;L^2(\R^3))\ \Longleftrightarrow\ \gamma<\frac47 \delta+\frac{8}{35},\quad
(V\cdot\nabla)V\in L^{5/4}(0,T;L^2(\R^3))\ \Longleftrightarrow\ \gamma<\frac{8}{11}\delta+\frac{8}{55}.
\endeq

Then we impose \eq{capital} and we notice that $6\delta<6\gamma<4\delta+1$ implies first $\delta<1/2$ and then $\gamma<1/2$;
in such range,
$$
\min\left\{\frac43 \delta-\frac{2}{15},\frac47 \delta+\frac{8}{35},\frac{8}{11}\delta+\frac{8}{55}\right\}
=\min\left\{\frac43 \delta-\frac{2}{15},\frac{8}{11}\delta+\frac{8}{55}\right\}\, .
$$
Altogether the above conditions yield
$$
\frac25 <\delta<\gamma<\min\left\{\frac43 \delta,\frac{4\delta+1}{6},\frac43 \delta-\frac{2}{15},\frac{8}{11}\delta+\frac{8}{55},\frac{1}{2}\right\}
=\min\left\{\frac{4\delta+1}{6},\frac43 \delta-\frac{2}{15},\frac{1}{2}\right\}\, .
$$
Therefore, \eq{blup5} play no role and only the constraint in \eq{blup7} remains: this proves \eq{sumg2}.\end{proof}

\subsection{The survived triangles and the pressure}\label{press}

The necessary and sufficient condition for both \eq{capital} and \eq{sumg1} to hold is
\neweq{cns}
(\delta,\gamma)\in A:=\left\{(\delta,\gamma)\in\R^2_+;\ \frac13<\delta<\gamma
<\min\left[\frac{4\delta+1}{6},4\delta-1,\frac12 \right]\right\}
\endeq
which is the (nonempty!) open triangle depicted in Figure \ref{finalmente} (left).\par
Similarly, the necessary and sufficient condition for both \eq{capital} and \eq{sumg2} to hold is
\neweq{cns2}
(\delta,\gamma)\in B:=\left\{(\delta,\gamma)\in\R^2_+;\ \frac25<\delta<\gamma<\min\left[
\frac{4\delta+1}{6},\frac{4\delta}{3}-\frac{2}{15},\frac12\right]\right\}
\endeq
which is the (nonempty!) open triangle depicted in Figure \ref{finalmente} (right).

\begin{figure}[ht]
\begin{center}
\includegraphics[height=50mm]{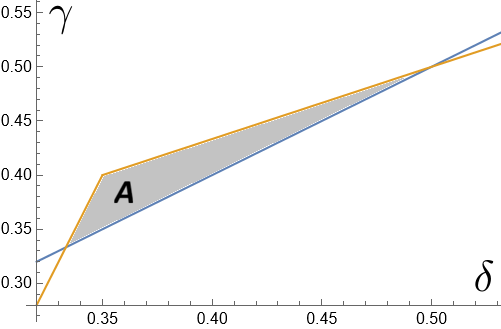}\qquad\qquad\includegraphics[height=50mm]{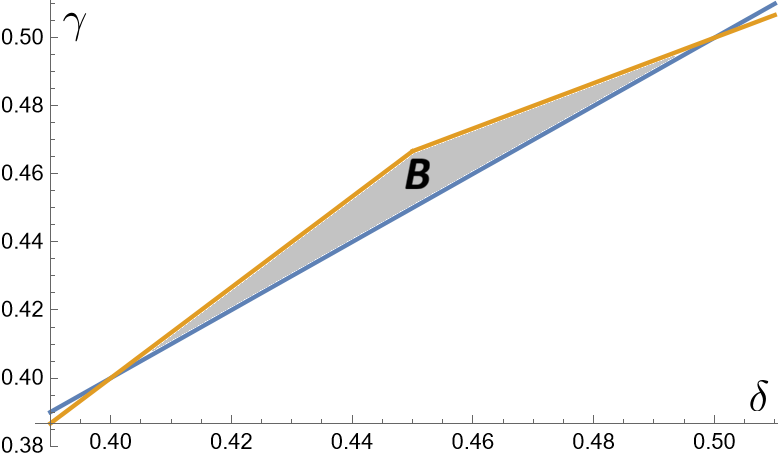}
\end{center}\vskip-5mm
\caption{The survived triangles $A$ and $B$ for $(\delta,\gamma)$ satisfying \eq{sumg1} or \eq{sumg2}.}\label{finalmente}
\end{figure}

\begin{remark}\label{secondmagic}
Without the magic cancellation, the power of the denominator of $(V\cdot\nabla)V$ in \eqref{summable} would be $6$, that is, $p=12$ instead of
$p=10$. In turn, the conditions in \eqref{blup3} and \eqref{blup5} would become, respectively, $\gamma<\tfrac{8\delta+1}{13}$ and
$\gamma<\tfrac{40\delta+8}{75}$; the triangles $A$ and $B$ would then be empty.
\end{remark}

We now introduce the pressure $P$. Since we do not yet have an explicit $f$, we cannot solve \eq{usualpressure}. Taking advantage of the facts that $V_t$ and $\Delta V$
are solenoidal in $\R^3\times\R_+$, we solve instead $-\Delta P=\nabla\cdot\big((V\cdot\nabla)V\big)$ in distributional form
and then put $f:=g+\nabla P$. To this end, we first make precise \eq{summable}$_2$ and obtain
$$
(V\cdot\nabla)V(\xi,t)=\frac{(T-t)^{4\delta}}{\left[(T-t)^{2\gamma}+|\xi|^2\right]^{5}}\, \Big(\!\!-2x(y^2+z^2),y(x^2-2z^2),z(x^2-2y^2)\Big)\, ;
$$
then we compute
$$
\nabla\cdot\big[(V\cdot\nabla)V\big] =2(T-t)^{4\delta}\, \frac{x^4+4x^2(y^2+z^2)-2y^4+16y^2z^2-2z^4}{[(T-t)^{2\gamma}+|\xi|^2]^6}+
2(T-t)^{4\delta+2\gamma}\, \frac{x^2-2y^2-2z^2}{[(T-t)^{2\gamma}+|\xi|^2]^6}\, .
$$
After convolution with the fundamental solution of $-\Delta$, we obtain
\begin{align}
P(\xi,t)= & \frac{2(T-t)^{4\delta}}{4\pi}\int_{\R^3}\frac{\zeta_1^4+4\zeta_1^2(\zeta_2^2+\zeta_3^2)
-2\zeta_2^4+16\zeta_2^2\zeta_3^2-2\zeta_3^4}{[(T-t)^{2\gamma}+|\zeta|^2]^6}\, \frac{d\zeta}{|\zeta-\xi|} \notag \\
&+\frac{2(T-t)^{4\delta+2\gamma}}{4\pi}\int_{\R^3}\frac{\zeta_1^2-2\zeta_2^2-2\zeta_3^2}{[(T-t)^{2\gamma}+|\zeta|^2]^6}
\, \frac{d\zeta}{|\zeta-\xi|}\, . \label{pressure}
\end{align}
We emphasise that the integrand is smooth in space, with a removable singularity at $\zeta=\xi$: in fact,
$P\in C^\infty(\R^3\times[0,T))$.
From \eq{Vrq} and $4\delta+1>6\gamma$ we infer
$V\in L^2(0,T;L^{2p}(\R^3))$ for $p\in[1,\infty)$ and, hence,
\neweq{regP53}
P\in L^2(0,T;L^p(\R^3))\qquad\forall p\in[1,\infty)\, .
\endeq

We then differentiate \eq{pressure} under the integral sign and obtain the vector field
\begin{align}
\nabla P(\xi,t)= & \frac{(T-t)^{4\delta}}{2\pi}\int_{\R^3}\frac{\zeta_1^4+4\zeta_1^2(\zeta_2^2+\zeta_3^2)
-2\zeta_2^4+16\zeta_2^2\zeta_3^2-2\zeta_3^4}{[(T-t)^{2\gamma}+|\zeta|^2]^6}\, \frac{\zeta-\xi}{|\zeta-\xi|^3}\, d\zeta \notag\\
&+\frac{(T-t)^{4\delta+2\gamma}}{2\pi}\int_{\R^3}\frac{\zeta_1^2-2\zeta_2^2-2\zeta_3^2}{[(T-t)^{2\gamma}+|\zeta|^2]^6}
\frac{\zeta-\xi}{|\zeta-\xi|^3}\, d\zeta \label{nablaP}
\end{align}
and, again, the integrands are smooth in space so that $\nabla P\in C^\infty(\R^3\times[0,T))$.
Using once more $4\delta+1>6\gamma$, by \eq{iff2}$_1$ with $(\alpha,k,p,q,r)\in\{(2\delta,4,12,3/2,1);(2\delta+\gamma,2,12,3/2,1)\}$ we infer
$\Delta P\in L^1(0,T;L^{3/2}(\R^3))$ so that, by \cite[Theorem 4.1]{Galdimeta}, $P\in L^1(0,T;\mathcal{D}^{2,3/2}(\R^3))$. Hence,
$\nabla P\in L^1(0,T;\mathcal{D}^{1,3/2}(\R^3))\subset L^1(0,T;L^3(\R^3))$ by Sobolev embedding; by \eq{nablaP} we see that $\nabla P$ has
``enough decay'' as $|\xi|\to\infty$, possible singularities for $t=T$ may arise only at the origin $\xi=0$ and, hence,
$\nabla P\in L^1(0,T;\mathcal{D}^{1,3/2}(\R^3))\subset L^1(0,T;L^2(\R^3))$. Summarising, by the Helmholtz-Weyl decomposition
\cite{fujiwara,miyakawa,simader}, we obtain
\begin{align}
&(V\cdot\nabla)V=-\nabla P+\Big[\underbrace{(V\cdot\nabla)V+\nabla P}_{F}\Big]\quad\mbox{and}\quad\nabla\cdot F=0\quad\mbox{with} \notag\\
&\nabla P\in L^2(0,T;L^{6/5}(\R^3))\cap L^1(0,T;L^2(\R^3))\, ,\quad F\in L^2(0,T;L^{6/5}(\R^3))\label{regnablaP}
\end{align}

In the same way we can argue in the $L^{5/4}(0,T;L^2(\R^3))$-setting, which means that the triangles $A$ and $B$
in \eq{cns}-\eq{cns2} are not modified after introduction of the pressure $P$.

\section{Proofs}\label{proofs}

\subsection{Proof of Theorem \ref{main2}}\label{seiquinti}

We use the computations in Section \ref{reconstruction} and, in order to explain in detail how we use them,
we first prove Theorem \ref{main2} for solutions to \eq{NS-intro} that blow up in a {\em unique} instant (and point).
Moreover, we need the integrability properties on $f$ to satisfy both the {\em first minimal requests} in
Definition \ref{LHsolutions} and Proposition \ref{summary}. This is why we take
$f\in L^2_{\rm loc}(\R_+;L^{6/5}(\R^3))\subset L^2_{\rm loc}(\R_+;H^{-1}(\R^3))$ as a starting assumption:
the time-exponent will then be increased with no harm and this is be done at the end of the present subsection.

\begin{theorem}\label{main}
Let $\G_\sigma$ be as in \eqref{spaceG}, let $A$ be as in \eqref{cns}. For any $(\delta,\gamma)\in A$ and $T>0$ there exist
$$ V_0\in\G_\sigma\, ,\qquad f\in L^1_{\rm loc}(\R_+;L^2(\R^3))\cap L^2_{\rm loc}(\R_+;L^{6/5}(\R^3))\subset L^2_{\rm loc}(\R_+;H^{-1}(\R^3))\, ,$$
such that $T\mapsto\|\nabla V_0\|_{L^2(\R^3)}$ is decreasing and \eqref{NS-intro}:\par\noindent
-- admits a unique global Leray-Hopf solution $V\in C^0(\R_+;L^2(\R^3))\cap L^2_{\rm loc}(\R_+;H^1(\R^3))$, which satisfies
the energy equality, \eqref{energyineq} with equality sign, for all $t>0$;\par\noindent
-- such solution satisfies
$$\lim_{t\to T}\|V(t)\|_{L^3(\R^3)}=\lim_{t\to T}\|\nabla V(t)\|_{L^2(\R^3)}=\infty\, ;$$
-- the associated pressure satisfies $P\in L^2_{\rm loc}(\R_+;L^p(\R^3))$ for all $p\in[1,\infty)$,
$\nabla P\in L^2_{\rm loc}(\R_+;L^{6/5}(\R^3))$ and
$$\exists\overline{r}=\overline{r}(\delta,\gamma)>2\quad\mbox{such that}\quad\lim_{t\to T}\|\nabla P\|_{L^r(0,t;L^{6/5}(\R^3))}=\infty
\quad\forall r>\overline{r}\, ;$$
-- such solution satisfies $V,P\in C^\infty(\R^3\times\R_+\setminus\{(0,T)\})$, that is, it admits a unique blow-up instant.
\end{theorem}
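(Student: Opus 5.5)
Fix $(\delta,\gamma)\in A$ and $T>0$, and take $V$ as in \eqref{scelta} on $[0,T)$ with $V_0$ as in \eqref{V0explicit}; we already know $V_0\in\G_\sigma$. On $[0,T)$ set $f:=g+\nabla P$, with $g$ as in \eqref{finalf} and $P$ as in \eqref{pressure}. Then $V_t-\Delta V+(V\cdot\nabla)V+\nabla P=f$ holds classically on $\R^3\times[0,T)$.

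For $t\ge T$ we cannot simply set $V\equiv0$: the field concentrates at the origin but does not vanish there, and we also need continuity in $L^2$ at $t=T$. The natural extension is to reflect in time, replacing $T-t$ by $|T-t|$ in \eqref{scelta} and \eqref{pressure}. This gives a field that is smooth for $t\neq T$ but does not decay as $t\to\infty$. Instead, for $t>T$ I would take
$$V(\xi,t)=\frac{\psi(t)\,(2yz,-xz,-xy)}{[(t-T)^{2\gamma}+|\xi|^2]^{5/2}},$$
with $\psi(t)=(t-T)^{2\delta}$ near $T$ and $\psi$ smooth and decaying to zero for large $t$ (for example $\psi(t)=(t-T)^{2\delta}e^{-(t-T)}$). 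The pressure is defined by the same convolution formula, and again $f:=V_t-\Delta V+(V\cdot\nabla)V+\nabla P$. All the integrability exponents computed in Section \ref{reconstruction} depend only on powers of $|T-t|$ near $T$, so Lemmas \ref{existencecond}, \ref{allterms}, \ref{defforce2} and \eqref{regnablaP} apply on both sides of $T$. Away from $T$, every quantity is smooth with polynomial decay in $\xi$. The monotonicity of $T\mapsto\|\nabla V_0\|_{L^2}$ follows from \eqref{nablaV2} at $t=0$, namely $\|\nabla V_0\|^2_{L^2}=CT^{4\delta-5\gamma}$ with $4\delta<5\gamma$ because $\delta<\gamma$.

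Next we verify the Leray--Hopf properties. Since $(\delta,\gamma)\in A$, Lemma \ref{defforce2} and \eqref{regnablaP} give $f\in L^2_{\rm loc}(\R_+;L^{6/5})\cap L^1_{\rm loc}(\R_+;L^2)$. Continuity $V\in C^0(\R_+;L^2)$ follows from \eqref{VLqq} with $q=2$. At $t=T$ the limit in $L^2$ is $0$ if $4\delta>3\gamma$. The set $A$ lies inside \eqref{capital}, which only gives $4\delta\ge3\gamma$ together with $\gamma<4\delta/3$, so the limit is indeed $0$. Hence $V(T)=0$ and $V$ is continuous in $L^2$ across $T$.

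The weak formulation \eqref{weakform} is the delicate step. I would integrate by parts against $\phi\in\mathcal D_T$ on $[0,T-\eps]\cup[T+\eps,\infty)$. The boundary terms are $\int V(T\pm\eps)\cdot\phi$, and they cancel in the limit by $L^2$-continuity. The pressure term disappears because $\nabla\cdot\phi=0$, and each term converges by the integrability already established. Energy equality and uniqueness then follow from Proposition \ref{summary} together with Lemma \ref{allterms}, whose hypothesis $4\delta+1>6\gamma$ is built into $A$.

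The blow-up statements follow from \eqref{blupL3} with $q=3$ (which holds because $\delta<\gamma$) and from \eqref{blupenstrophy}. For the pressure, the regularity $P\in L^2_{\rm loc}(\R_+;L^p)$ comes from \eqref{regP53}. To get the blow-up of $\nabla P$ in $L^r(0,t;L^{6/5})$, I would show by scaling $\zeta=(T-t)^\gamma\eta$ in \eqref{nablaP} that
$$\|\nabla P(t)\|_{L^{6/5}}=c\,(T-t)^{4\delta-6\gamma+5\gamma/2}.$$
This holds with $c\neq0$ provided the leading term does not vanish identically, which can be checked from the explicit, nonzero divergence. Since $\nabla P\in L^2_{\rm loc}(\R_+;L^{6/5})$, this yields $\overline r=1/(6\gamma-4\delta-5\gamma/2)>2$. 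This computation is the part I expect to require the most care, since one must exclude cancellation in the numerator and check the sign of the exponent on $A$. Smoothness away from $(0,T)$ is immediate from the explicit formulas, because all denominators stay positive there.
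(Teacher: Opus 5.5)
Your route is the paper's: the explicit field \eqref{scelta}, $f:=g+\nabla P$, Lemmas \ref{existencecond}--\ref{defforce2} with Proposition \ref{summary}, and a continuation past $t=T$.

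Some differences are minor:
\begin{itemize}
\item The damped continuation is not needed. No decay is required, and since $\|V(t)\|_{L^2(\R^3)}\to0$ as $t\to T$, even $V\equiv0$, $f\equiv0$ for $t>T$ would do. This contradicts your opening remark that one cannot set $V\equiv0$.
\item Your passage to the limit in \eqref{weakform} across $t=T$ usefully spells out what the paper calls gluing.
\item $g\in L^1(0,T;L^2(\R^3))$ on $A$ comes from \eqref{sumg22} with $(r,q)=(1,2)$, not from Lemma \ref{defforce2}.
\end{itemize}
Two steps, however, fail as written.

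\textbf{First, the pressure exponent is wrong.} Substitute $\zeta=\varepsilon\eta$, $\xi=\varepsilon x$ with $\varepsilon=(T-t)^\gamma$ in \eqref{nablaP}. Both terms scale identically, and
\[
\nabla P(\xi,t)=(T-t)^{4\delta}\varepsilon^{-7}G(\xi/\varepsilon)\,,\qquad\|\nabla P(t)\|_{L^{6/5}(\R^3)}=\|G\|_{L^{6/5}(\R^3)}\,(T-t)^{4\delta-\frac92\gamma}\,.
\]
Here $G\in L^{6/5}(\R^3)$ by Calder\'on--Zygmund. Also $G\not\equiv0$, because its divergence is, up to sign, the rescaled nonzero $\nabla\cdot[(V\cdot\nabla)V]$. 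Your exponent $4\delta-\frac72\gamma$ loses one power of $\varepsilon$, and this matters. At $(\delta,\gamma)=(\frac{9}{25},\frac25)\in A$ from Example \ref{exple}, your exponent equals $\frac1{25}>0$, so your formula predicts decay and a negative $\overline r$.

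With the correct exponent, $\gamma>\delta$ gives $9\gamma>8\delta$. Hence $\|\nabla P\|_{L^r(0,t;L^{6/5})}$ diverges for $r\ge\overline r:=2/(9\gamma-8\delta)$. Moreover, $\overline r>2$ is exactly $8\delta+1>9\gamma$, i.e.\ \eqref{maledetta}.

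\textbf{Second, the smoothness claim in the last item fails.} Positivity of the denominators does not give smoothness at $(\xi,T)$, $\xi\neq0$, because the time factors $|T-t|^{2\delta}$ and $|T-t|^{2\gamma}$ are not smooth there. On $A$ one has $2\delta<1$, and for fixed $\xi$ with $yz\neq0$,
\[
\partial_tV^1(\xi,t)=-4\delta\,yz\,|\xi|^{-5}(T-t)^{2\delta-1}\big(1+o(1)\big)\qquad\mbox{as }t\to T^-,
\]
which is unbounded. Hence $V$ is not $C^1$ in any neighbourhood of any point of $\{t=T\}$. No continuation for $t>T$ can repair this, since the defect is already present as $t\to T^-$.

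The paper's proof asserts the same smoothness from the same formulas with $|T-t|$. So this is a defect you share with it, not a departure from it. What the construction actually delivers is:
\begin{itemize}
\item $V,P\in C^\infty(\R^3\times(\R_+\setminus\{T\}))$;
\item $V$ continuous and bounded near every $(\xi,T)$ with $\xi\neq0$.
\end{itemize}
Thus $(0,T)$ is the only singular point in the sense of local boundedness, not in the $C^\infty$ sense claimed.
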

\begin{proof} Let $T>0$ and $(\delta,\gamma)\in A$, see \eq{cns}. Let $V$ and $V_0$ be as in \eq{scelta}, let $P$ be as in \eq{pressure}
implying \eq{regP53}. The fact that $V_0\in\G_\sigma$ is proved just after its definition in \eq{V0explicit}.
Using \eqref{general} for $t=0$ with $(\alpha,k,p,q)=(\delta,3,7,2)$ we obtain $\|\nabla V_0\|_{L^2(\R^3)}^2=C T^{4\delta-5\gamma}$,
which is decreasing since $(\delta,\gamma)\in A$.
Let $g$ be as in \eq{finalf}, let $f:=g+\nabla P$: then, by Lemma \ref{defforce2} and \eq{regnablaP},
$f\in L^2(0,T;L^{6/5}(\R^3))\subset L^2(0,T;H^{-1}(\R^3))$. Moreover, $f\in L^1(0,T;L^2(\R^3))$
follows from \eq{regnablaP} and from \eq{sumg22} with $(r,q)=(1,2)$, that is,
$$
g\in L^1(0,T;L^2(\R^3))\ \Longleftrightarrow\ \gamma<\min\big\{
\tfrac{2}{7}(2\delta+1),\tfrac{2}{11}(4\delta+1),\tfrac{4}{3}\delta\big\}
$$
and the inequality is satisfied since $(\delta,\gamma)\in A$.\par
By Lemma \ref{existencecond} we know that $V\in L^\infty(0,T;L^2(\R^3))\cap L^2(0,T;H^1(\R^3))$ while by Lemma \ref{allterms},
combined with Proposition \ref{summary} $(i)$, $V$ satisfies the energy equality: hence, $V$ is a Leray-Hopf solution to
\neweq{explicit}
\left\{\begin{array}{ll}
V_t-\Delta V+(V\cdot\nabla)V+\nabla P=f\, ,\quad\nabla\cdot V=0\quad & \mbox{ in }\R^3\times[0,T)\\
V(\xi,0)=V_0(\xi)\quad & \mbox{ in }\R^3\, .
\end{array}\right.
\endeq

By \eq{foruniq}$_2$, the solution to \eq{explicit} is unique by Proposition \ref{summary}
$(ii)$. The blow-up of the $L^3(\R^3)$-norm of $V(t)$ (condition \eq{blupL3}) follows from Lemma \ref{wekandblup} and \eq{cns}; this also implies
the blow-up of the $L^2(\R^3)$-norm of $\nabla V(t)$ (condition \eq{blupenstrophy}).
The blow-up of the $L^r(0,t;L^{6/5}(\R^3))$-norm of $\nabla P$ follows from \eq{nablaP}.\par
Since $V(\xi,T)=0$ in $\R^3\setminus\{0\}$, if we extend $V$ in \eq{scelta} also for $t\ge T$, by \eq{iff2}$_3$ we see that
\neweq{continuity}
V\in C^0_w(\R_+;L^2(\R^3))\ \Longleftrightarrow\ 4\delta\ge3\gamma\, ,\qquad V\in C^0(\R_+;L^2(\R^3))\ \Longleftrightarrow\ 4\delta>3\gamma\, .
\endeq
Then $V$ in \eq{scelta}, with $(T-t)$ replaced by $|T-t|$, may be extended for $t\ge T$ as a vector field
$V\in C^0(\R_+;L^2(\R^3))\cap L^2_{\rm loc}(\R_+;H^1(\R^3))$ and its extension is a global Leray-Hopf solution to
\neweq{explicit2}
\left\{\begin{array}{ll}
V_t-\Delta V+(V\cdot\nabla)V+\nabla P=f\, ,\quad\nabla\cdot V=0\quad & \mbox{ in }\R^3\times(T,+\infty)\\
V(\xi,T)=0\quad & \mbox{ in }\R^3\, ,
\end{array}\right.
\endeq
according to Definition \ref{LHsolutions}; in \eq{explicit2} also $\nabla P\in L^2_{\rm loc}(\R_+;L^{6/5}(\R^3))$, after extension of
\eq{pressure} and \eq{nablaP} for $t>T$.
The condition \eq{L4L4} ensures the validity of the energy equality
\neweq{energyeq}
\|V(t)\|_{L^2(\R^3)}^2+2\int_{T}^{t}\|\nabla V(s)\|_{L^2(\R^3)}^2ds=2\int_{T}^{t}\int_{\R^3}V(s)f(s)ds
\qquad\forall t\ge T
\endeq
also {\em beyond} $t=T$, see Proposition \ref{summary} $(i)$. By gluing together $V$ and its extension (with $V(\xi,T)=0$ in
$\R^3\setminus\{0\}$) we obtain a global Leray-Hopf solution $V\in L^\infty_{\rm loc}(\R_+;L^2(\R^3))\cap L^2_{\rm loc}(\R_+;H^1(\R^3))$,
whose uniqueness is (again) ensured by Proposition \ref{summary} $(ii)$ combined with \eq{foruniq}$_2$.\par
The smoothness of the extended $(V,P)$ follows from \eq{scelta} and \eq{pressure} with $(T-t)$ replaced by $|T-t|$.
\end{proof}

\begin{example}\label{exple}
{\em For $V$ as in \eqref{scelta}, take
$$
\delta=\frac{9}{25}=0.36\, ,\quad\gamma=\frac{2}{5}=0.4\, ,\quad\left(\frac{9}{25},\frac{2}{5}\right)\in A\, ,
$$
so that \eqref{q} yields $q>15$. Choosing $q=18$ gives $2q/(q-2)=12/5$ and
$V\in L^{12/5}_{\rm loc}(\R_+;L^{18}(\R^3))$ for
$$
V(\xi,t)=\left(\frac{2yz|T-t|^{18/25}}{\left[|T-t|^{4/5}+|\xi|^2\right]^{5/2}},
\frac{-xz|T-t|^{18/25}}{\left[|T-t|^{4/5}+|\xi|^2\right]^{5/2}},
\frac{-xy|T-t|^{18/25}}{\left[|T-t|^{4/5}+|\xi|^2\right]^{5/2}}\right)\, .
$$
Let $P(\xi,t)$ be as in \eqref{pressure}-\eqref{nablaP}, let $f=g+\nabla P$.
Then, Theorem \ref{main} and its proof show that $(V,P)$ is the unique global Leray-Hopf solution to \eqref{NS-intro} and
$\|V(t)\|_{L^3(\R^3)}$ blows up as $t\to T$.}
\end{example}

\begin{remark}
Maintaining $T>0$ fixed, Theorem \ref{main} generates a bijective relation between the triangle $A$ and the couples
$(V_0,f)\in\G_\sigma\times L^2_{\rm loc}(\R_+;L^{6/5}(\R^3))$, thereby giving a continuum of examples of Leray-Hopf solutions with blow-up.
This somehow complements the existence of strong solutions for all $f$ in a dense set of a suitable functional space proved
by Fursikov \cite{fursikov,fursikov2}, that is, a generical property.
\end{remark}

To continue, we consider the shrinked triangle
$$
A_2:=\left\{(\delta,\gamma)\in A;\, \frac{8\delta+1}{10}<\gamma\right\}
$$
which is nonempty and has vertices at $(\delta,\gamma)\in\{(11/32,3/8);(7/20,2/5);(1/2,1/2)\}$: notice the {\em strict} lower bound.
Since $(\delta,\gamma)\in A_2$ implies $\delta<1/2$, taking $(\delta,\gamma)\in A_2$ yields the blow-up conditions in both \eq{blupgrad4}
and \eq{lastbutnotleast}, that is, the solution determined in Theorem \ref{main} also satisfies
$$
\lim_{t\to T}\, \sqrt[4]{T-t}\, \|\nabla V(t)\|_{L^2(\R^3)}=\infty\, ,\quad V\not\in L^8(0,T;L^4(\R^3))\, ,\quad
\nabla V\not\in L^4(0,T;L^2(\R^3))
$$
that are three of the four blow-up conditions in Theorem \ref{main2} when a unique blow-up instant $T>0$ exists.\par
What remains to be done is to let $r\in[2,4)$ vary, to prove the last blow-up condition \eq{blupofL3}$_2$, and
to exhibit countably many blow-up instants.

\begin{proof}[Proof of Theorem \ref{main2}]

In order to increase $r$, we first rewrite \eq{sumg22} in the particular case $q=6/5$:
$$
g\in L^r(0,T;L^{6/5}(\R^3))\ \Longleftrightarrow\ \gamma<\min\big\{
\tfrac{2}{5}(2\delta+\tfrac{1}{r}),\tfrac{2}{9}(4\delta+\tfrac{1}{r}),2(2\delta-\tfrac{r-1}{r})\big\}\, .
$$
Taking $r\ge2$, the triangle $A_2$ should then be further shrink and replaced by the (more stringent) conditions
\neweq{startup}
\frac{3}{8}<\frac{8\delta+1}{10}<\gamma<\min\left\{\frac{4\delta}{3},\frac{4\delta+1}{6},\frac{2}{5}\left(2\delta+\frac{1}{r}\right),
\frac{2}{9}\left(4\delta+\frac{1}{r}\right),2\left(2\delta-1+\frac{1}{r}\right),\frac12 \right\}\, .
\endeq
Since $\delta>11/32$, we have $8\delta>4\delta+1$ and the first term inside the min can be omitted. Also notice that
$$
\min\left\{\tfrac{4\delta+1}{6},2\left(2\delta-1+\tfrac{1}{r}\right)\right\}<
\min\left\{\tfrac{2}{5}\left(2\delta+\tfrac{1}{r}\right),\tfrac{2}{9}\left(4\delta+\tfrac{1}{r}\right)\right\}=
\tfrac{2}{9}\left(4\delta+\tfrac{1}{r}\right)\quad\forall \delta\in\left(\tfrac{11}{32},\tfrac{1}{2}\right)\, ,
\quad\forall r\in[2,4).
$$
Therefore \eq{startup} becomes
$$
\frac{3}{8}<\frac{8\delta+1}{10}<\gamma<\min\left\{\frac{4\delta+1}{6},2\left(2\delta-1+\frac{1}{r}\right)\right\}
$$
which is the open triangle $A_r$ (contained in $A_2$) having vertices
$$
\left(\frac{21}{32}-\frac{5}{8r},\frac{5}{8}-\frac{1}{2r}\right)\, ,\qquad
\left(\frac{13}{20}-\frac{3}{5r},\frac{3}{5}-\frac{2}{5r}\right)\, ,\qquad\left(\frac12,\frac12\right)\, ;
$$
this triangle $A_r$ coincides with $A_2$ when $r=2$, it is nonemepty as long as $r<4$ and its closure collapses to the point $(1/2,1/2)$ as $r\to4$.
Hence, for $r\in[2,4)$ we may find $(\delta,\gamma)\in A_r$ for which both $V$ and $f$ satisfy all the stated properties, except for
\eq{blupofL3}$_2$: to obtain this last property, by \eq{VLqq} one may take $(\delta,\gamma)$ sufficiently close to the vertex
$(13/20-3/5r,3/5-2/5r)$.\par
We finally extend the framework of Theorem \ref{main} and of the first part of the present proof to the case of a countable number of uniformly
bounded blow-up instants. To this end, we slightly change notation. Let $V_{T,\delta,\gamma}(\xi,t)$ be the vector field in \eq{scelta} for some
$T>0$ and some $(\delta,\gamma)\in A_r$. For any $\xi_0\in\R^3$ we can repeat the construction leading to Theorem \ref{main}
by starting with $V_{T,\delta,\gamma}(\xi-\xi_0,t)$, as for the ``shifted bubble'' \eq{bubble} below. Therefore, the solution in Theorem
\ref{main} has six degrees of freedom described by the six parameters:
\neweq{sixparameters}
\xi_0\in\R^3\, ,\quad(\delta,\gamma)\in A_r\, ,\quad T>0\, .
\endeq
For simplicity, we maintain the first five parameters in \eq{sixparameters} fixed and we only focus on different
blow-up instants $T_m$, all occurring at the origin ($\xi_0=0$).\par
Let $(\delta,\gamma)\in A_r$ and take any bounded strictly increasing sequence $\{T_m\}$ of positive instants:
$0<T_1<T_2<...<T_m<T$ for some $T>0$. Let $V$ be as in \eq{scelta}, put $V_m(\xi,t)=V_{T_m,\delta,\gamma}(\xi,t)$ and take
$$
W(\xi,t)=\sum_{m=1}^{\infty}\frac{V_m(\xi,t)}{2^m}\, ,
\quad
W_0(\xi)=W(\xi,0)=\sum_{m=1}^{\infty}\frac{1}{2^m}\left(\tfrac{2yz\, T_m^{2\delta}}{\left[T_m^{2\gamma}+|\xi|^2\right]^{5/2}},
\tfrac{-xz\, T_m^{2\delta}}{\left[T_m^{2\gamma}+|\xi|^2\right]^{5/2}},\tfrac{-xy\, T_m^{2\delta}}{\left[T_m^{2\gamma}+|\xi|^2\right]^{5/2}}\right)\in\G_\sigma ,
$$
see \eq{spaceG}. Therefore, by extending each $V_m$ over $(T_m,T]$,
$$
\|W\|_{L^\infty(0,T;L^2(\R^3))}\le\sum_{m=1}^{\infty}\frac{\|V_m\|_{L^\infty(0,T;L^2(\R^3))}}{2^m}=\|V\|_{L^\infty(0,T;L^2(\R^3))}
$$
and similarly for the other norms. The delicate point is to bound $\|(W\cdot\nabla)W\|_{L^2(0,T;L^{6/5}(\R^3))}$. To this end, we notice that
$$(W\cdot\nabla)W=\sum_{i,j=1}^{\infty}\frac{(V_i\cdot\nabla)V_j}{2^{i+j}}$$
and this series is composed by two kinds of terms.\par\noindent
$\bullet$ If $j=i$ then each $(V_i\cdot\nabla)V_i$ enjoys the same regularity properties as in Theorem \ref{main} and, hence,
$$
\sum_{i=1}^{\infty}\frac{(V_i\cdot\nabla)V_i}{4^i}\in L^2_{\rm loc}(\R_+;L^{6/5}(\R^3))\, .
$$
$\bullet$ If $j\neq i$ then $V_j$ and $V_i$ are regular, respectively, when $t=T_i$ and $t=T_j$: since $T_j\neq T_i$ we then have
$$
\sum_{\stackrel{i,j=1}{j\neq i}}^{\infty}\frac{(V_i\cdot\nabla)V_j}{2^{i+j}}\in L^2_{\rm loc}(\R_+;L^{6/5}(\R^3))\, .
$$

Next, let $P=P_T$ be as in \eq{pressure} and put
$$
Q(\xi,t)=\sum_{m=1}^{\infty}\frac{P_{T_m}(\xi,t)}{2^m}\, .
$$
Since $V_m$ and $P_{T_m}$ have a singularity at $t=T_m$ and since they can be extended also for $t>T_m$, see the proof of Theorem \ref{main},
all the statements can be obtained in the same way. Finally, put again
$$
f:=W_t-\Delta W+(W\cdot\nabla)W+\nabla Q\in L^1(0,T;L^2(\R^3))\cap L^2(0,T;L^{6/5}(\R^3))
$$
and the proof of Theorem \ref{main2} is complete, up to renaming $W$ with $V$.
\end{proof}

\subsection{Proof of Theorem \ref{main3}}\label{prooftheomain3}

Here we take $f$ to satisfy both the {\em second minimal requests} in Definition \ref{LHsolutions} and Proposition \ref{summary}:
$f\in L^{5/4}_{\rm loc}(\R_+;L^2(\R^3))$ as a starting assumption, with the time-exponent free to increase until some maximum threshold.
We use again the computations in Section \ref{reconstruction} and we repeat the steps of the proof of Theorem \ref{main}.
We first prove Theorem \ref{main3} for solutions to \eqref{NS-intro} that blow up in a unique instant (and point), then
we go to a countable number of points, finally we increase the time exponent.\par
By replacing $A$ in \eq{cns} with $B$ in \eq{cns2}, we can repeat the proof of Theorem \ref{main}, to obtain

\begin{theorem}\label{manna}
Let $\G_\sigma$ be as in \eqref{spaceG}, let $B$ be as in \eqref{cns}. For any $(\delta,\gamma)\in B$ and $T>0$ there exist
$$ V_0\in\G_\sigma\, ,\qquad f\in L^{5/4}_{\rm loc}(\R_+;L^2(\R^3))\subset L^1_{\rm loc}(\R_+;L^2(\R^3))\, ,$$
such that $T\mapsto\|\nabla V_0\|_{L^2(\R^3)}$ is decreasing and \eqref{NS-intro} admits a unique global Leray-Hopf solution
$V\in C^0(\R_+;L^2(\R^3))\cap L^2_{\rm loc}(\R_+;H^1(\R^3))$ which satisfies the energy equality, \eqref{energyineq} with equality sign, for all
$t>0$; moreover, $P\in L^2_{\rm loc}(\R_+;L^p(\R^3))$ for all $p\in[1,\infty)$, $\nabla P\in L^{5/4}_{\rm loc}(\R_+;L^2(\R^3))$ and
$$\lim_{t\to T}\|V(t)\|_{L^3(\R^3)}=\lim_{t\to T}\|\nabla V(t)\|_{L^2(\R^3)}=\infty,\quad
\exists\overline{r}=\overline{r}(\delta,\gamma)>\frac54\mbox{ s.t. }\lim_{t\to T}\|\nabla P\|_{L^r(0,t;L^2(\R^3))}=\infty
\ \forall r>\overline{r}\, ;$$
such solution satisfies $V,P\in C^\infty(\R^3\times\R_+\setminus\{(0,T)\})$.
\end{theorem}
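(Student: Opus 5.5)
The plan is to repeat the proof of Theorem \ref{main} line by line, with the triangle $A$ replaced by $B$ from \eqref{cns2} (the statement's reference to \eqref{cns} should be read as \eqref{cns2}). Fix $T>0$ and $(\delta,\gamma)\in B$. Take $V$ as in \eqref{scelta}, $V_0$ as in \eqref{V0explicit}, $P$ as in \eqref{pressure}, $g$ as in \eqref{finalf}, and set $f:=g+\nabla P$.

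The preliminary checks come first. Since $B\subset\{\delta<\gamma<\min\{4\delta/3,(4\delta+1)/6\}\}$, condition \eqref{capital} holds. Hence Lemmas \ref{existencecond}, \ref{allterms} and \ref{wekandblup} apply, giving:
\begin{itemize}
\item $V\in L^\infty(0,T;L^2)\cap L^2(0,T;H^1)$;
\item $V\in L^4(0,T;L^4)$ and the Serrin condition \eqref{foruniq}$_2$;
\item blow-up of $\|V(t)\|_{L^3}$ and $\|\nabla V(t)\|_{L^2}$, because $2\cdot3\delta<6\gamma$ and $4\delta<5\gamma$ follow from $\delta<\gamma$.
\end{itemize}
By \eqref{general}, $\|\nabla V_0\|_{L^2}^2=CT^{4\delta-5\gamma}$, and this is decreasing in $T$ since $4\delta<5\gamma$. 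Lemma \ref{defforce2}, \eqref{sumg2}, gives $g\in L^{5/4}(0,T;L^2)$.

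The key step is the pressure. I will show $\nabla P\in L^{5/4}(0,T;L^2(\R^3))$, which is the claim made just after \eqref{regnablaP}. By scaling, write
\[
\nabla P(\xi,t)=(T-t)^{4\delta-7\gamma}\,\Phi\big(\xi/(T-t)^\gamma\big)
\]
for a fixed smooth $\Phi$. Indeed, the divergence of $(V\cdot\nabla)V$ is homogeneous of degree $-8$ in the scaled variables, so $\nabla P$ has degree $-7$, and $\Phi$ decays like $|\zeta|^{-4}$ at infinity (the leading multipole of a mean-zero quadrupole source). Then $\|\nabla P(t)\|_{L^2}=C(T-t)^{4\delta-7\gamma+3\gamma/2}$. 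This is the same rate as $\|(V\cdot\nabla)V(t)\|_{L^2}$, since the Leray projection is bounded on $L^2$. By \eqref{blup5}, that norm is $L^{5/4}$-integrable in time precisely when $\gamma<\frac{8}{11}\delta+\frac{8}{55}$, which is implied in $B$. This $L^2$-boundedness of the Helmholtz--Weyl projection is the cleanest route, and I expect this to be the main point needing care. The same scaling shows $\|\nabla P\|_{L^r(0,t;L^2)}\to\infty$ as $t\to T$ for all $r$ above $\overline r:=1/(7\gamma-4\delta-3\gamma/2)$, and $\overline r>5/4$ by the strict inequality. Also \eqref{regP53} holds as before, giving $P\in L^2(0,T;L^p)$ for all $p\ge1$. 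So $f\in L^{5/4}(0,T;L^2)\subset L^1(0,T;L^2)$.

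With this $f$, Proposition \ref{summary}$(i)$ applies, since it covers $f\in L^{5/4}_{\rm loc}(\R_+;L^2)$. Therefore $V$ is a Leray--Hopf solution satisfying the energy equality, and it is unique by Proposition \ref{summary}$(ii)$ and \eqref{foruniq}$_2$. Finally, extend $V$ and $P$ to $t>T$ by replacing $T-t$ with $|T-t|$. Then:
\begin{itemize}
\item \eqref{continuity} with $4\delta>3\gamma$ gives $V\in C^0(\R_+;L^2)$;
\item the energy equality \eqref{energyeq} past $T$ and global uniqueness follow exactly as in the proof of Theorem \ref{main};
\item smoothness away from $(0,T)$ is read off from the explicit formulas.
\end{itemize}
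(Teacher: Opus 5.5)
Your route is the paper's: its proof of this theorem is just ``repeat the proof of Theorem~\ref{main} with $B$ in place of $A$''. Your pressure step is correct, and more explicit than the paper's ``in the same way''.
\begin{itemize}
\item $L^2$-orthogonality of the Helmholtz--Weyl decomposition gives $\|\nabla P(t)\|_{L^2}\le\|(V\cdot\nabla)V(t)\|_{L^2}$.
\item Exact scaling gives $\|\nabla P(t)\|_{L^2}=C(T-t)^{4\delta-11\gamma/2}$. Here $C>0$ because $\nabla\cdot[(V\cdot\nabla)V]\not\equiv0$.
\item Hence $\overline r=2/(11\gamma-8\delta)$. Moreover $\overline r>5/4$ iff $\gamma<\frac{8}{11}\delta+\frac{8}{55}$, which holds in $B$ since $\frac{4\delta+1}{6}<\frac{8}{11}\delta+\frac{8}{55}$ for $\delta>\frac{7}{20}$.
\end{itemize}
However, two conclusions that you take over unchecked from the proof of Theorem~\ref{main} are false for this construction.

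\textbf{Smoothness of $V,P$ on $\R^3\times\R_+\setminus\{(0,T)\}$.} In $B$ one has $2\delta<1$. Take $\xi_0=(x_0,y_0,z_0)$ with $y_0z_0\neq0$. Then
\[
V^1(\xi_0,t)=\frac{2y_0z_0\,|T-t|^{2\delta}}{\big[|T-t|^{2\gamma}+|\xi_0|^2\big]^{5/2}}\,,\qquad
|V^1_t(\xi_0,t)|\sim\frac{4\delta\,|y_0z_0|}{|\xi_0|^5}\,|T-t|^{2\delta-1}\to\infty\quad(t\to T)\,,
\]
so $V$ is not even $C^1$ near $(\xi_0,T)$. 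This already happens as $t\to T^-$, so no extension beyond $T$ can repair it. Indeed, the force $f=V_t-\Delta V+(V\cdot\nabla)V+\nabla P$ is itself unbounded near such points, since the last three terms stay bounded there. What the explicit formulas do give is $V,P\in C^\infty(\R^3\times(\R_+\setminus\{T\}))$, with $V$ continuous and vanishing at every $(\xi_0,T)$, $\xi_0\neq0$. The conclusion has to be weakened to that.

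\textbf{$P\in L^2_{\rm loc}(\R_+;L^p(\R^3))$ for all $p\ge1$.} The scaling you used for $\nabla P$ gives $P(\xi,t)=(T-t)^{4\delta-6\gamma}\,\Pi\big(\xi/(T-t)^\gamma\big)$. Hence $\|P(t)\|_{L^p}=C_p(T-t)^{4\delta-6\gamma+3\gamma/p}$, which is square integrable near $T$ iff $8\delta+1-12\gamma+6\gamma/p>0$. In $B$ we have $\gamma>\delta>\frac25$, which forces $12\gamma>8\delta+1$, so this fails for all large $p$.
\begin{itemize}
\item The argument behind \eqref{regP53} only gives $V\in L^2(0,T;L^{2p})$. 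Calder\'on--Zygmund then gives $\|P(t)\|_{L^p}\lesssim\|V(t)\|_{L^{2p}}^2$, so $P\in L^1(0,T;L^p)$ for $1<p<\infty$; squaring halves the time exponent.
\item The case $p=1$ fails at every fixed $t$. $P(t)$ decays exactly like $|\xi|^{-3}$: its far field is a nonzero multiple of $\partial_1^2|\xi|^{-1}$, because $\int V\otimes V$ is proportional to ${\rm diag}(4,1,1)$.
\end{itemize}
Both defects sit in the paper's own proof of Theorem~\ref{main}, so they cannot be fixed by citing it. The remaining assertions of your proposal are fine.
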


By using Theorem \ref{manna} and by arguing as in the proof of Theorem \ref{main2}, we proceed with the full proof.

\begin{proof}[Proof of Theorem \ref{main3}] We consider the shrinked triangle
$$
B_{5/4}:=\left\{(\delta,\gamma)\in B;\, \frac{8\delta+1}{10}<\gamma\right\}
$$
which is nonempty and has vertices at $(\delta,\gamma)\in\{(7/16,9/20);(9/20,7/15);(1/2,1/2)\}$; notice the {\em strict} lower bound.
Since $(\delta,\gamma)\in B_{5/4}$ implies $\delta<1/2$, taking $(\delta,\gamma)\in B_{5/4}$ yields the blow-up conditions in both \eq{blupgrad4}
and \eq{lastbutnotleast}, that is, the solution determined in Theorem \ref{manna} also satisfies
$$
\lim_{t\to T}\, \sqrt[4]{T-t}\, \|\nabla V(t)\|_{L^2(\R^3)}=\infty\, ,\quad V\not\in L^8(0,T;L^4(\R^3))\, ,\quad
\nabla V\not\in L^4(0,T;L^2(\R^3))\, ,
$$
that are three of the four blow-up conditions in Theorem \ref{main3} when a unique blow-up instant $T>0$ exists.
Only the last blow-up condition \eq{blupofL3n2}$_2$ is missing.\par
Before doing this, let us increase $r$ up to $r<4/3$ and let us rewrite \eq{sumg22} in the particular case $q=2$:
$$
g\in L^r(0,T;L^2(\R^3))\ \Longleftrightarrow\ \gamma<\min\big\{
\tfrac{2}{7}(2\delta+\tfrac{1}{r}),\tfrac{2}{11}(4\delta+\tfrac{1}{r}),\, \tfrac{2}{3}(2\delta-\tfrac{r-1}{r})\big\}\, .
$$
Taking $r\ge5/4$, the triangle $B_{5/4}$ should then be further shrink and replaced by the (more stringent) conditions
\neweq{startup2}
\frac{9}{20}<\frac{8\delta+1}{10}<\gamma<\min\left\{\frac{4\delta}{3},\frac{4\delta+1}{6},\frac{2}{7}\left(2\delta+\frac{1}{r}\right),
\frac{2}{11}\left(4\delta+\frac{1}{r}\right),\frac{2}{3}\left(2\delta-1+\frac{1}{r}\right),\frac12\right\}\, .
\endeq
Since $\delta>7/16$, the first term within the min in \eq{startup2} can be dropped. Also notice that
$$
\min\left\{\tfrac{4\delta+1}{6},\tfrac{2}{3}\left(2\delta-1+\tfrac{1}{r}\right)\right\}<
\min\left\{\tfrac{2}{7}\left(2\delta+\tfrac{1}{r}\right),\tfrac{2}{11}\left(4\delta+\tfrac{1}{r}\right)\right\}=
\tfrac{2}{11}\left(4\delta+\tfrac{1}{r}\right)\quad\forall \delta\in\left(\tfrac{7}{16},\tfrac{1}{2}\right)\, ,
\quad\forall r\in\left[\tfrac54,\tfrac43\right).
$$
Therefore \eq{startup2} becomes
$$
\frac{8\delta+1}{10}<\gamma<\min\left\{\frac{4\delta+1}{6},\frac{2}{3}\left(2\delta-1+\frac{1}{r}\right)\right\}
$$
which is the open triangle $B_r$ (contained in $B_{5/4}$) having vertices
$$
\left(\frac{23}{16}-\frac{5}{4r},\frac{5}{4}-\frac{1}{r}\right)\, ,\qquad
\left(\frac{5}{4}-\frac{1}{r},1-\frac{2}{3r}\right)\, ,\qquad\left(\frac12,\frac12\right)\, ;
$$
this triangle $B_r$ coincides with $B_{5/4}$ when $r=5/4$, it is nonemepty as long as $r<4/3$ and its closure collapses to the point $(1/2,1/2)$ as $r\to4/3$.
Hence, for $r\in[5/4,4/3)$ we may find $(\delta,\gamma)\in B_r$ for which both $V$ and $f$ satisfy all the stated properties, except for
\eq{blupofL3n2}$_2$: to obtain this last property, by \eq{VLqq} one may take $(\delta,\gamma)$ sufficiently close to the vertex
$(13/20-3/5r,3/5-2/5r)$.\par
Finally, the extension of the result to a countable number of blow-up instants, can be obtained by combining
the just improved version of Theorem \ref{manna} with the same procedure (convergent series) as in the proof of Theorem \ref{main2}.
\end{proof}

\subsection{Proof of Theorem \ref{inflating}}\label{sharpsohr}

Proposition \ref{minimalf} requires that $V_0\in H^1(\R^3)$, that holds in our construction, see \eq{V0explicit} and the subsequent comments.
But Proposition \ref{minimalf} also requires that $f\in L^2_{\rm loc}(\R_+;L^2(\R^3))$ and we preliminarily show that, even if instead of
\eq{blupL3} we assume the ``weaker blow-up condition'' \eq{blupenstrophy},
\neweq{noL2L2}
\mbox{the examples of Theorems \ref{main2} and \ref{main3} cannot be extended to the case where }f\in L^2_{\rm loc}(0,T;L^2(\R^3))\, .
\endeq

Since it is visually enlightening, we give a geometric evidence to \eq{noL2L2} before entering in the details of the proof of Theorem
\ref{inflating}. We start by discussing the exponents $q>1$ and $r\ge1$ for which Theorems \ref{main} and \ref{manna} continue to hold when
either $f\in L^2_{\rm loc}(\R_+;L^q(\R^3))$ or $f\in L^r_{\rm loc}(\R_+;L^2(\R^3))$,
that is, when the $L^2$-integrability of $f$ is maintained either with respect to time or with respect to space.
After replacing \eq{blupL3} with \eq{blupenstrophy}, the statements of
Lemmas \ref{existencecond}-\ref{allterms}-\ref{wekandblup} all hold whenever
\neweq{capital2}
(\delta,\gamma)\in D:=\left\{(\delta,\gamma)\in\R^2_+;\, \frac{4}{5}\delta<\gamma<\min\left[\frac43 \delta,\, \frac{4\delta+1}{6}
\, ,1\right]\right\}\, .
\endeq
Hence, the triangles $A$ and $B$ in \eq{cns} and \eq{cns2} are ``inflated'', see Figure \ref{inflated}.
\begin{figure}[ht]
\begin{center}
\includegraphics[height=4cm]{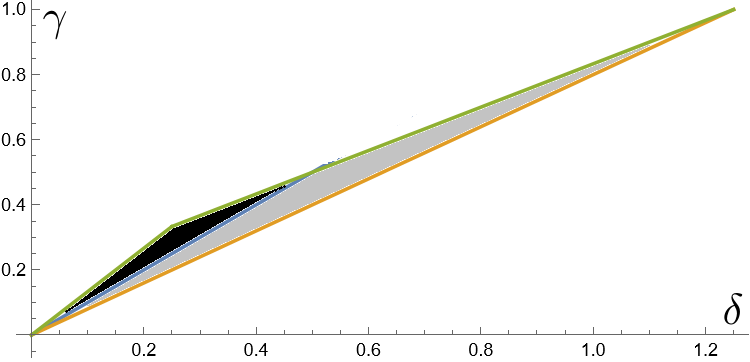}
\end{center}\vskip-5mm
\caption{The triangle $A$ in \eq{cns} (black) and the inflated triangle $D$ in \eq{capital2} (black+grey).}\label{inflated}
\end{figure}

Next, we extend Lemma \ref{defforce2}. Let $V$ be as in \eqref{scelta} and $g$ be as in \eqref{finalf}. We particularise \eq{sumg22}
to the cases when $g$ is square-integrable either with respect to time or with respect to space:
\neweq{unosquare}
\begin{array}{rcl}
\forall q>1\quad g\in L^2(0,T;L^q(\R^3)) &\Longleftrightarrow& \gamma<\min\big\{
\tfrac{q}{5q-3}(2\delta+\tfrac{1}{2}),\, \tfrac{q}{7q-3}(4\delta+\tfrac{1}{2}),\, \tfrac{q}{3q-3}(2\delta-\tfrac{1}{2})\big\}\, ,\\
\forall r\ge1\quad g\in L^r(0,T;L^2(\R^3)) &\Longleftrightarrow& \gamma<\min\big\{
\tfrac{2}{7}(2\delta+\tfrac{1}{r}),\, \tfrac{2}{11}(4\delta+\tfrac{1}{r}),\, \tfrac{2}{3}(2\delta-\tfrac{r-1}{r})\big\}\, .
\end{array}
\endeq

Then we introduce the pressure $P$ as in \eq{pressure}, by following the integrability of $(V\cdot\nabla)V$
and, after setting again $f:=g+\nabla P$, by \eq{nablaP} and \eq{unosquare} we infer that
\neweq{tresquare}
\begin{array}{rcl}
\forall q>1\quad f\in L^2(0,T;L^q(\R^3)) &\Longleftrightarrow& \gamma<\min\big\{
\tfrac{q}{5q-3}(2\delta+\tfrac{1}{2}),\, \tfrac{q}{7q-3}(4\delta+\tfrac{1}{2}),\, \tfrac{q}{3q-3}(2\delta-\tfrac{1}{2})\big\}\, ,\\
\forall r\ge1\quad f\in L^r(0,T;L^2(\R^3)) &\Longleftrightarrow& \gamma<\min\big\{
\tfrac{2}{7}(2\delta+\tfrac{1}{r}),\, \tfrac{2}{11}(4\delta+\tfrac{1}{r}),\, \tfrac{2}{3}(2\delta-\tfrac{r-1}{r})\big\}\, .
\end{array}
\endeq
The question then reduces to find for which $q\in(1,2]$ (resp.\ $r\in[1,2]$) the triangle defined in \eq{capital2} intersects the two polygons
(either a triangle or a quadrilateral) defined by \eq{tresquare}, that is,
\neweq{polygons}
\begin{array}{rcl}
0<\frac{4}{5}\delta<\gamma<\Psi_q(\delta):=\min\big\{\tfrac{q}{5q-3}(2\delta+\tfrac{1}{2}),\tfrac{q}{7q-3}(4\delta+\tfrac{1}{2}), \tfrac{q}{3q-3}(2\delta-\tfrac{1}{2}),\tfrac{4\delta+1}{6},\frac43 \delta,1\big\}\, ,\\
0<\frac{4}{5}\delta<\gamma<\Phi_r(\delta):=
\min\big\{\tfrac{2}{7}(2\delta+\tfrac{1}{r}),\tfrac{2}{11}(4\delta+\tfrac{1}{r}),\tfrac{2}{3}(2\delta-\tfrac{r-1}{r}),
\tfrac{4\delta+1}{6},\tfrac43 \delta,1\big\}\, .
\end{array}
\endeq
In the limit case $q=r=2$, we have $\Phi_2\equiv\Psi_2$; we are now ready to graphically explain how to deduce \eq{noL2L2}.\par
By fixing $r=2$ and letting $q\in(1,2]$ vary, we obtain the pictures in Figure \ref{varyq}. On the left ($q=3/2$), the polygon
in \eq{polygons}$_1$ intersects the triangle in \eq{capital2}. On the middle ($q=5/3$), the polygon in \eq{polygons}$_1$
intersects the triangle in \eq{capital2}. On the right ($q=2$), the polygon in \eq{polygons}$_1$ no longer intersects the triangle in \eq{capital2}.
\begin{figure}[ht]
\begin{center}
\includegraphics[height=27mm]{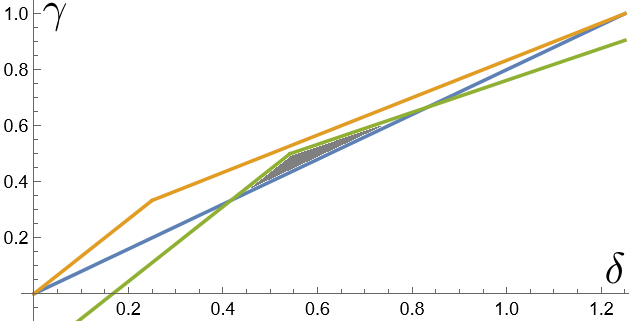}\quad\includegraphics[height=27mm]{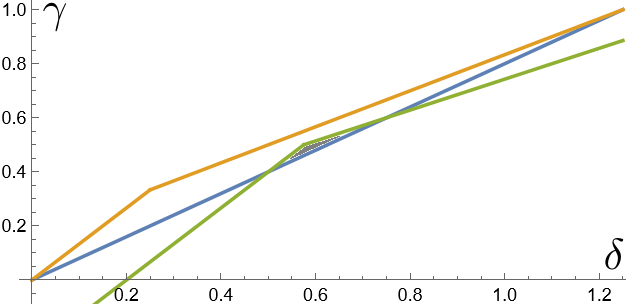}\quad\includegraphics[height=27mm]{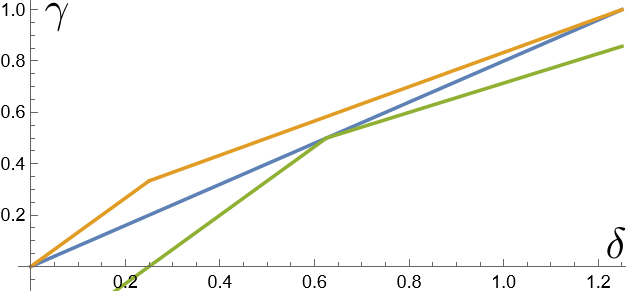}
\end{center}\vskip-5mm
\caption{The triangle in \eq{capital2} and graphs of the functions $\Psi_{3/2}$ (left), $\Psi_{5/3}$ (middle), $\Psi_2$ (right).}\label{varyq}
\end{figure}

Similarly, by fixing $q=2$ and letting $r\in[1,2]$ vary, we obtain the pictures in Figure \ref{varyr}. On the left ($r=8/5$), the polygon
defined by \eq{polygons}$_2$ intersects the triangle in \eq{capital2}. On the middle ($r=9/5$), the polygon in \eq{polygons}$_2$
intersects the triangle in \eq{capital2}. On the right ($r=2$), the polygon in \eq{polygons}$_2$ no longer intersects the triangle
in \eq{capital2}.
\begin{figure}[ht]
\begin{center}
\includegraphics[height=25mm]{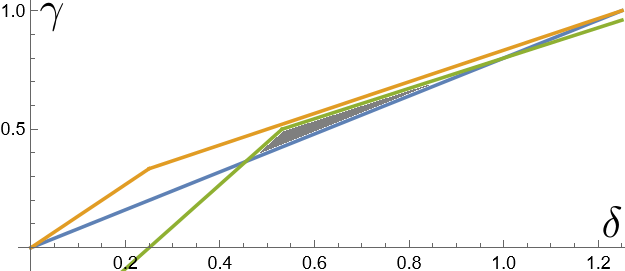}\quad\includegraphics[height=25mm]{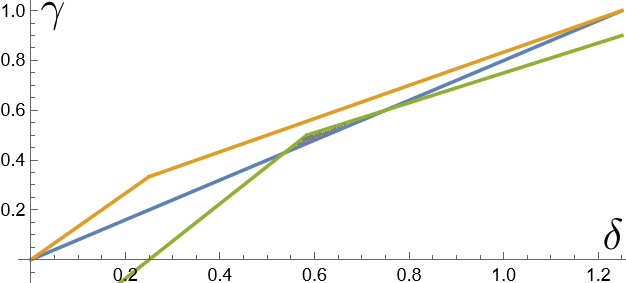}\quad\includegraphics[height=25mm]{fig_q2.png}
\end{center}\vskip-5mm
\caption{The triangle in \eq{capital2} and graphs of the functions $\Phi_{8/5}$ (left), $\Phi_{9/5}$ (middle), $\Phi_2$ (right).}\label{varyr}
\end{figure}

Therefore, the polygons in \eq{polygons} intersect the triangle in \eq{capital2} if and only if $(q,r)\in(1,2]\times[1,2]\setminus\{(2,2)\}$
and this illustrates \eq{noL2L2}.

\begin{proof}[Proof of Theorem \ref{inflating}] Except for the blow-up rates in \eq{quantitative}-\eq{quantitative2}, all the statements in Theorem \ref{inflating} may be obtained as in the proof of Theorem \ref{main2} (including the countable number of blow-up instants),
provided that $(\delta,\gamma)$ satisfy the conditions in \eq{polygons}. So, let us now prove \eq{quantitative}-\eq{quantitative2}.\par
$\diamondsuit$ When $r=2$, for the region \eq{polygons}$_1$ to be nonempty it is necessary that
$$
\frac{4\delta}{5}<\frac{q}{3q-3}\left(2\delta-\frac{1}{2}\right)\ \Longleftrightarrow\ \delta>\frac{5q}{4(6-q)}\, ;
$$
since $\tfrac{5q}{4(6-q)}\ge\tfrac14$ for all $q\in[1,2]$, this implies that $8\delta\ge4\delta+1$ and that
$4\delta/3$ can be omitted in the min inside \eq{polygons}$_1$. Moreover, $4\delta+1<6$ because of the constraint $\delta<5/4$:
we may also omit 1. Then \eq{polygons}$_1$ may be rewritten as
\neweq{conditionr2}
\frac{q}{6-q}<\frac{4\delta}{5}<\gamma<\min\left\{\frac{q}{5q-3}\left(2\delta+\frac{1}{2}\right),\frac{q}{7q-3}\left(4\delta+\frac{1}{2}\right), \frac{q}{3q-3}\left(2\delta-\frac{1}{2}\right),\frac{4\delta+1}{6}\right\}\, .
\endeq
There are still too many terms (four) inside the min. So, we consider the simpler situation
where $q\in[\tfrac32,2)$: in such case, a straightforward observation reads
$$
\frac{q}{5q-3}\left(2\delta+\frac{1}{2}\right)=\frac{q}{2(5q-3)}(4\delta+1)\ \stackrel{q\ge\tfrac32}{\le}\ \frac{4\delta+1}{6}\, ,
$$
showing that the last term in the min in \eq{conditionr2} can be omitted. The next step is to eliminate the second term for which
we need to show that
\neweq{claim01}
\min\left\{\frac{q}{5q-3}\left(2\delta+\frac{1}{2}\right),\frac{q}{3q-3}\left(2\delta-\frac{1}{2}\right)\right\}\le
\frac{q}{7q-3}\left(4\delta+\frac{1}{2}\right)\, .
\endeq
For this, we preliminarily notice three facts
$$
\tfrac{q}{5q-3}\left(2\delta+\tfrac{1}{2}\right)\le\tfrac{q}{7q-3}\left(4\delta+\tfrac{1}{2}\right)\, \Leftrightarrow\,
\delta\ge\tfrac{q}{6(q-1)}\, ,\qquad
\tfrac{q}{3q-3}\left(2\delta-\tfrac{1}{2}\right)\le\tfrac{q}{7q-3}\left(4\delta+\tfrac{1}{2}\right)\, \Leftrightarrow\,
\delta\le\tfrac{5q-3}{2(q+3)}\, ,
$$
$$
\frac{q}{6(q-1)}\le1-\frac{3}{4q}\le\frac{5q-3}{2(q+3)}\qquad\forall q\in\left[\tfrac32,2\right)\, ;
$$
they yield the following implications
$$
\delta\le1-\tfrac{3}{4q}\, \Rightarrow\, \delta\le\tfrac{5q-3}{2(q+3)}\, \Leftrightarrow\,
\tfrac{q}{3q-3}\left(2\delta-\tfrac{1}{2}\right)\le\tfrac{q}{7q-3}\left(4\delta+\tfrac{1}{2}\right)\, ,
$$
$$
\delta\ge1-\tfrac{3}{4q}\, \Rightarrow\, \delta\ge\tfrac{q}{6(q-1)}\, \Leftrightarrow\,
\tfrac{q}{5q-3}\left(2\delta+\tfrac{1}{2}\right)\le\tfrac{q}{7q-3}\left(4\delta+\tfrac{1}{2}\right)\, .
$$
Hence, in any case, \eq{claim01} follows. In turn, this shows that \eq{conditionr2} reduces to
$$
\frac{q}{6-q}<\frac{4\delta}{5}<\gamma<\min\left\{\frac{q}{5q-3}\left(2\delta+\frac{1}{2}\right), \frac{q}{3q-3}\left(2\delta-\frac{1}{2}\right)\right\}\, ,
$$
which defines a triangle whose vertices are (recall that $q\ge3/2>6/5$)
$$
(\delta,\gamma)\in\left\{\left(\frac{5q}{4(6-q)},\frac{q}{6-q}\right);\left(1-\frac{3}{4q},\frac12\right);
\left(\frac{5q}{4(5q-6)},\frac{q}{5q-6}\right)\right\}\, .
$$
The important vertex is the only one not belonging to the line $5\gamma=4\delta$: its distance from the line will give the blow-up
rate for $\|\nabla V(t)\|_{L^2(\R^3)}$ as $t\to T_m$. From \eq{nablaV2} we know that
\neweq{weknow}
|T_m-t|^\lambda\|\nabla V(t)\|_{L^2(\R^3)}\sim|T_m-t|^{\lambda+2\delta-5\gamma/2}\qquad\mbox{as }t\to T_m\, .
\endeq
The line parallel to $5\gamma=4\delta$ and containing the middle vertex has equation
$$
\frac{5\gamma}{2}=2\delta+\frac{3(2-q)}{4q}
$$
and, then, \eq{quantitative} holds for all $\lambda<\tfrac{3(2-q)}{4q}$.
\par\medskip

$\diamondsuit$ When $q=2$ is fixed, we first notice that
$$
\frac{4\delta}{3}\ge\frac{2}{3}\left(2\delta-1+\frac{1}{r}\right)\qquad\forall r\in[1,2]
$$
and, hence, $4\delta/3$ can be omitted in the min inside \eq{polygons}$_2$. Moreover, $4\delta+1<6$ because of the constraint $\delta<5/4$:
we may also omit 1. For the region \eq{polygons}$_2$ to be nonempty it is necessary that
\neweq{deltainterval}
\frac{4\delta}{5}<\min\left\{\frac{2}{7}\left(2\delta+\frac{1}{r}\right),\frac{2}{3}\left(2\delta-1+\frac{1}{r}\right)\right\}\, \Longleftrightarrow\, \frac{5}{4}\left(1-\frac{1}{r}\right)<\delta<\frac{5}{4r}
\endeq
and, from now on, we take $\delta$ in this interval: its amplitude is $\tfrac{5}{2r}-\tfrac54 >0$ and tends to 0 for $r\to2$, recovering
the behaviour illustrated in Figure \ref{varyr}. Therefore, \eq{polygons}$_2$ is reduced to
\neweq{conditionq2}
1-\frac{1}{r}<\frac{4}{5}\delta<\gamma<\min\left\{\frac{2}{7}\left(2\delta+\frac{1}{r}\right),
\frac{2}{3}\left(2\delta-\frac{r-1}{r}\right),\frac{2}{11}\left(4\delta+\frac{1}{r}\right),\frac{4\delta+1}{6}\right\}\, .
\endeq

This polygon changes shape and number of vertices, depending on $r$. This is the point where it is crucial to assume that $r\ge4/3$.
In such case, we claim that \eq{conditionq2} reduces to
\neweq{conditionq2r43}
1-\frac{1}{r}<\frac{4}{5}\delta<\gamma<\min\left\{\frac{2}{7}\left(2\delta+\frac{1}{r}\right),
\frac{2}{3}\left(2\delta-\frac{r-1}{r}\right)\right\}\, .
\endeq
To this end, we notice that
$$
\frac{2}{3}\left(2\delta-\frac{r-1}{r}\right)\le\frac{2}{7}\left(2\delta+\frac{1}{r}\right)\ \Longrightarrow\
\delta\le\frac{7}{8}-\frac{1}{2r}
$$
and the proof of \eq{conditionq2r43} then follows if we show the following facts:
\begin{align}
\frac{2}{3}\left(2\delta-\frac{r-1}{r}\right)\le\frac{4\delta+1}{6}\quad\mbox{and}\quad\frac{2}{3}\left(2\delta-\frac{r-1}{r}\right)
\le\frac{2}{11}\left(4\delta+\frac{1}{r}\right)\qquad\forall\delta\le\frac{7}{8}-\frac{1}{2r}\, , \label{monstre1}\\
\frac{2}{7}\left(2\delta+\frac{1}{r}\right)\le\frac{4\delta+1}{6}\quad\mbox{and}\quad\frac{2}{7}\left(2\delta+\frac{1}{r}\right)
\le\frac{2}{11}\left(4\delta+\frac{1}{r}\right)\qquad\forall\delta\ge\frac{7}{8}-\frac{1}{2r}\, . \label{monstre2}
\end{align}
The following chain of inequalities proves \eq{monstre1}$_1$:
$$
4\left(2\delta-\frac{r-1}{r}\right)=4\delta+4\delta-4+\frac{4}{r}\ \stackrel{\delta\le\tfrac{7}{8}-\tfrac{1}{2r}}{\le}\
4\delta+\frac{2}{r}-\frac{1}{2}
\ \stackrel{r\ge\tfrac43}{\le}\ 4\delta+1\, .
$$
The following chain of inequalities proves \eq{monstre1}$_2$:
$$
11\left(2\delta-\frac{r-1}{r}\right)=12\delta+10\delta-11+\frac{11}{r}\ \stackrel{\delta\le\tfrac{7}{8}-\tfrac{1}{2r}}{\le}\
12\delta-\frac{4}{3}+\frac{6}{r}
\ \stackrel{r\ge\tfrac43}{\le}\ 3\left(4\delta+\frac{1}{r}\right)\, .
$$
The following chain of inequalities proves \eq{monstre2}$_1$:
$$
12\left(2\delta+\frac{1}{r}\right)=28\delta-4\delta+\frac{12}{r}\ \stackrel{\delta\ge\tfrac{7}{8}-\tfrac{1}{2r}}{\le}\
28\delta+\frac{14}{r}-\frac{7}{2}\ \stackrel{r\ge\tfrac43}{\le}\ 7(4\delta+1)\, .
$$
The following chain of inequalities proves \eq{monstre2}$_2$:
$$
11\left(2\delta+\frac{1}{r}\right)=28\delta-6\delta+\frac{11}{r}\ \stackrel{\delta\ge\tfrac{7}{8}-\tfrac{1}{2r}}{\le}\
28\delta+\frac{14}{r}-\frac{21}{4}\ \stackrel{r\ge\tfrac43}{\le}\ 7\left(4\delta+\frac{1}{r}\right)\, .
$$

All the inequalities in \eq{monstre1}-\eq{monstre2} are proved, which also proves \eq{conditionq2r43}.
The region in \eq{conditionq2r43} is an open triangle whose vertices are
$$
(\delta,\gamma)\in\left\{\left(\frac{5}{4}-\frac{5}{4r},1-\frac{1}{r}\right);\left(\frac{7}{8}-\frac{1}{2r},\frac12\right);
\left(\frac{5}{4r},\frac{1}{r}\right)\right\}\, .
$$
The important vertex is the only one not belonging to the line $5\gamma=4\delta$: its distance from the line will give the blow-up rate
\eq{quantitative2}. Using again \eq{weknow}, we find that the line parallel to $5\gamma=4\delta$ and containing the middle vertex has equation
$$
\frac{5\gamma}{2}=2\delta+\frac{2-r}{2r}
$$
and, then, \eq{quantitative2} holds for all $\lambda<\tfrac{2-r}{2r}$.
\end{proof}

\begin{remark}
The parameter $T>0$ in both Theorems \ref{main} and \ref{manna} is relevant not only because it determines the blow-up
time but also because it measures the size of the initial velocity $V_0$ in \eqref{scelta}. Using \eqref{general} for $t=0$ with,
respectively, $(\alpha,k,p,q)=(\delta,2,5,2)$ and $(\alpha,k,p,q)=(\delta,3,7,2)$, we obtain that
$$
\|V_0\|_{L^2(\R^3)}^2=C T^{4\delta-3\gamma}\, ,\qquad\|\nabla V_0\|_{L^2(\R^3)}^2=C T^{4\delta-5\gamma}\, .
$$
Since for any $(\delta,\gamma)\in A\cup B$ we have $3\gamma<4\delta<5\gamma$, we infer that
$$
T\mapsto\|V_0\|_{L^2(\R^3)}\mbox{ is increasing,}\qquad T\mapsto\|\nabla V_0\|_{L^2(\R^3)}\mbox{ is decreasing.}
$$
Also for all $(\delta,\gamma)\in D$, see \eqref{capital2}, the same properties hold.
\end{remark}

\subsection{Proof of Theorem \ref{mainLinfty}}\label{blupLinftyLinfty}

We need to find conditions on $f$ ensuring the {\em pointwise blow-up} of $V$, that is,
\neweq{blupVinfty}
\lim_{t\to T}\|V\|_{L^\infty([0,t)\times\R^3)}=\infty\, .
\endeq
Recall that $V$ in \eq{scelta} is bounded and smooth in $[0,t]\times\R^3$ for all $t<T$. Moreover, from \eq{VLqq} with $q=\infty$,
we know that \eq{blupVinfty} occurs whenever
$$
\gamma>\frac{2\delta}{3}\, .
$$
So, by \eq{sumg22}, the question becomes for which $(r,q)\in[1,\infty)\times(1,\infty)$ there exist $(\delta,\gamma)\in\R_+^2$
such that
$$
\frac{2\delta}{3}<\gamma<\min\left\{\frac43 \delta,\frac{4\delta+1}{6},
\frac{q}{5q-3}\left(2\delta+\frac{1}{r}\right),\frac{q}{7q-3}\left(4\delta+\frac{1}{r}\right),
\frac{q}{3q-3}\left(2\delta-\frac{r-1}{r}\right)\right\}\, .
$$
The first two terms in the min are always strictly larger than $2\delta/3$. Therefore, \eq{blupVinfty} holds for those
$(r,q)$ ensuring the existence of $(\delta,\gamma)\in\R_+^2$ such that
$$
\frac{2\delta}{3}<\gamma<\min\left\{\frac{q}{5q-3}\left(2\delta+\frac{1}{r}\right),\frac{q}{7q-3}\left(4\delta+\frac{1}{r}\right),
\frac{q}{3q-3}\left(2\delta-\frac{r-1}{r}\right)\right\}\, .
$$
We then notice that
$$
\delta\ge\frac{5}{4}-\frac{1}{4}\left(\frac{2}{r}+\frac{3}{q}\right)\ \Longleftrightarrow\ \frac{q}{5q-3}\left(2\delta+\frac{1}{r}\right)\le\frac{q}{3q-3}\left(2\delta-\frac{r-1}{r}\right)\, .
$$
Hence, \eq{blupVinfty} holds for those $(r,q)\in[1,\infty)\times(1,\infty)$ ensuring the existence of
\neweq{unbelievable}
\delta>0\, ,\quad \delta\ge\frac{5}{4}-\frac{1}{4}\left(\frac{2}{r}+\frac{3}{q}\right)
\endeq
such that
$$
\frac{2\delta}{3}<\min\left\{\frac{q}{5q-3}\left(2\delta+\frac{1}{r}\right),\frac{q}{7q-3}\left(4\delta+\frac{1}{r}\right)\right\}\, .
$$
When \eq{unbelievable} hold, we are so left with two inequalities.\par
The {\em first inequality}
\neweq{firstineq}
\frac{2\delta}{3}<\frac{q}{5q-3}\left(2\delta+\frac{1}{r}\right)
\endeq
is satisfied for any $\delta\ge5/4$ (that ensures \eq{unbelievable}) whenever $1<q\le3/2$. If $q>3/2$, two cases may occur:\\
-- if $2/r+3/q\ge5$, then \eq{unbelievable} reduces to $\delta>0$ and \eq{firstineq} is satisfied for $\delta$ sufficiently small whatever
$(r,q)\in[1,\infty)\times(1,\infty)$ are;\\
-- if $2/r+3/q<5$, then \eq{unbelievable} reduces to $\delta\ge\delta^*:=5/4-(2/r+3/q)/4$ and, by taking $\delta=\delta^*$ in \eq{firstineq},
we find that it is satisfied whenever
\neweq{supertech}
1\le r<\frac{2q}{2q-3}\ \Longleftrightarrow\ \mbox{\eqref{ipotesona}$_1$}\, .
\endeq

The {\em second inequality}
\neweq{secondineq}
\frac{2\delta}{3}<\frac{q}{7q-3}\left(4\delta+\frac{1}{r}\right)
\endeq
is satisfied for any $\delta\ge5/4$ (that ensures \eq{unbelievable}) whenever $1<q\le3$. If $q>3$, two cases may occur:\\
-- if $2/r+3/q\ge5$, then \eq{unbelievable} reduces to $\delta>0$ and \eq{secondineq} is satisfied for $\delta$ sufficiently small whatever
$(r,q)\in[1,\infty)\times(1,\infty)$ are;\\
-- if $2/r+3/q<5$, then \eq{unbelievable} reduces to $\delta\ge\delta^*$ and, taking $\delta=\delta^*$ in \eq{secondineq},
we see that it is satisfied when
$$1\le r<\frac{2q(4q-3)}{(5q-3)(q-3)}\, ,$$
which is less restrictive than \eq{supertech}.\par
Summarising, if \eq{ipotesona} holds, then \eq{firstineq} and \eq{secondineq} are fulfilled for some $\delta$ satisfying
\eq{unbelievable}.\par
To prove \eq{bullet} in $t=T$, we notice that, if $(r,q)$ satisfy \eqref{ipotesonagrad}$_1$, then
$$
\frac{q}{5q-3}\left(2\delta+\frac{1}{r}\right)\le\frac{2\delta}{3}\ \Longleftrightarrow\ \frac{3q}{2r(2q-3)}\le\delta,
\qquad\frac{q}{7q-3}\left(4\delta+\frac{1}{r}\right)\le\frac{2\delta}{3}\ \Longleftrightarrow\ \frac{3q}{2r(q-3)}\le\delta.
$$
Therefore, \eq{bullet} follows if we show that there exist $(r,q)$ satisfying \eq{ipotesonagrad}$_1$ such that
\neweq{onemore}
\frac{q}{7q-3}\left(4\delta^*+\frac{1}{r}\right)\le\frac{2\delta^*}{3}\ \Longleftrightarrow\ \frac{2}{r}+\frac{3}{q}\le\frac{5q-6}{4q-3}\in(1,5)
\qquad\forall q>3
\endeq
since, then, we find $\gamma<2\delta^*/3$ and, by \eq{VLqq} (with $q=\infty$) and the same principle as in \eq{continuity}, we infer that
$V\in C^0(\R^3\times\R_+)$ and $V(\xi,T)\equiv0$ in $\R^3$. Condition \eq{onemore} has to be added to \eqref{ipotesonagrad}$_1$ for \eq{bullet}
to hold.\par
The proof of Theorem \ref{mainLinfty} is so complete for a unique blow-up instant and point.
The extension to countable blow-up can be obtained by using the convergent numerical series as in Theorem \ref{main2}.

\subsection{Proof of Theorem \ref{mainLinftygrad}}\label{blupgrad}

By applying Lemma \ref{calculus} to \eq{Vt}-\eq{summable}, we find that
\neweq{VVV}
\begin{array}{rl}
\Delta V\in L^\infty(\R^3\times(0,T))\ \Longleftrightarrow\ \gamma\le\frac{2\delta}{5}\, , &\quad
(V\cdot\nabla)V\in L^\infty(\R^3\times(0,T))\ \Longleftrightarrow\ \gamma\le\frac{4\delta}{7}\, ,\\
\nabla V\in L^\infty(\R^3\times(0,T))\ \Longleftrightarrow\ \gamma\le\frac{\delta}{2}\, ,&\quad
V_t\in L^\infty(\R^3\times(0,T))\ \Longleftrightarrow\ \gamma\le\frac{2\delta-1}{3}\, .
\end{array}
\endeq
Due the {\em magic cancellation} which lead to \eq{summable} and was discussed in Remark \ref{magic}, the condition for $\nabla V$ to
remain bounded is more restrictive than the condition for $(V\cdot\nabla)V$ to remain bounded. \par
We need here to find conditions on $f$ ensuring the pointwise blow-up of $\nabla V$, that is,
\neweq{blupnablaVinfty}
\lim_{t\to T}\|\nabla V\|_{L^\infty([0,t)\times\R^3)}=\infty\, .
\endeq
Recall that $\nabla V$ is bounded and smooth in $[0,t]\times\R^3$ for all $t<T$. Moreover, from \eq{VVV}
we know that \eq{blupnablaVinfty} occurs whenever
$$
\gamma>\frac{\delta}{2}\, .
$$
So, by \eq{sumg22}, the question becomes for which $(r,q)\in[1,\infty)\times(1,\infty)$ there exist $(\delta,\gamma)\in\R_+^2$
such that
$$
\frac{\delta}{2}<\gamma<\min\left\{\frac43 \delta,\frac{4\delta+1}{6},
\frac{q}{5q-3}\left(2\delta+\frac{1}{r}\right),\frac{q}{7q-3}\left(4\delta+\frac{1}{r}\right),
\frac{q}{3q-3}\left(2\delta-\frac{r-1}{r}\right)\right\}\, .
$$
Clearly, the first two terms in the min are always strictly larger than $\delta/2$. Therefore, \eq{blupnablaVinfty} holds for those
$(r,q)$ ensuring the existence of $(\delta,\gamma)\in\R_+^2$ such that
$$
\frac{\delta}{2}<\gamma<\min\left\{\frac{q}{5q-3}\left(2\delta+\frac{1}{r}\right),\frac{q}{7q-3}\left(4\delta+\frac{1}{r}\right),
\frac{q}{3q-3}\left(2\delta-\frac{r-1}{r}\right)\right\}\, .
$$
Arguing as for \eq{unbelievable}, we find that \eq{blupnablaVinfty} holds for those $(r,q)\in[1,\infty)\times(1,\infty)$ ensuring the existence of
$\delta>0$ satisfying \eq{unbelievable} and such that
$$
\frac{\delta}{2}<\min\left\{\frac{q}{5q-3}\left(2\delta+\frac{1}{r}\right),\frac{q}{7q-3}\left(4\delta+\frac{1}{r}\right)\right\}\, .
$$
When \eq{unbelievable} hold, we are so left with two inequalities.\par
The {\em first inequality}
\neweq{firstineqgrad}
\frac{\delta}{2}<\frac{q}{5q-3}\left(2\delta+\frac{1}{r}\right)
\endeq
is satisfied for any $\delta\ge5/4$ (that ensures \eq{unbelievable}) whenever $1<q\le3$. If $q>3$, two cases may occur:\\
-- if $2/r+3/q\ge5$, then \eq{unbelievable} reduces to $\delta>0$ and \eq{firstineqgrad} is satisfied for $\delta$ sufficiently small whatever
$(r,q)\in[1,\infty)\times(1,\infty)$ are;\\
-- if $2/r+3/q<5$, then \eq{unbelievable} reduces to $\delta\ge\delta^*:=5/4-(2/r+3/q)/4$ and, by taking $\delta=\delta^*$ in
\eq{firstineqgrad}, we find that it is satisfied whenever
\neweq{supertechgrad}
1\le r<\frac{2q}{q-3}\ \Longleftrightarrow\ \mbox{\eqref{ipotesonagrad}$_1$}\, .
\endeq

The {\em second inequality}
\neweq{secondineqgrad}
\frac{\delta}{2}<\frac{q}{7q-3}\left(4\delta+\frac{1}{r}\right)
\endeq
is satisfied for any $\delta\ge5/4$ (that ensures \eq{unbelievable}) whatever $(r,q)\in[1,\infty)\times(1,\infty)$ are.\par
Summarising, if \eq{ipotesonagrad} holds, then \eq{firstineqgrad} and \eq{secondineqgrad} are fulfilled for some $\delta$
satisfying \eq{unbelievable}. This proves Theorem \ref{mainLinftygrad} for a unique blow-up instant and point.
The extension to countable blow-up can be obtained by using the convergent numerical series as in Theorem \ref{main2}.

\subsection{Proof of Theorem \ref{final}}\label{Leray_blup}

Let $V$ be as in \eq{scelta}. Then, using \eq{VVV} and arguing as for \eq{iff2}$_3$ and \eq{continuity}, we have that
\neweq{blupsmooth}
\gamma<\min\left\{\frac\delta2,\frac{2\delta-1}{3}\right\}\, \Longrightarrow\, V\in C^1(\R^3\times[0,T]),\quad
V(\xi,T)\equiv\nabla V(\xi,T)\equiv V_t(\xi,T)\equiv0\mbox{ in }\R^3,
\endeq
the last condition being intended as limit for $t\to T$ and extension by continuity. By \eq{VVV} and
aiming to construct an example where {\em only} the second derivatives blow up as $t\to T$, we assume that
\neweq{blupsoloLap}
\frac{2\delta}{5}<\gamma<\min\left\{\frac{\delta}{2}\, ,\, \frac{2\delta-1}{3}\right\}\, .
\endeq
If \eq{blupsoloLap} holds, we necessarily have
\neweq{deltagammalarge}
\delta>\frac{5}{4}\, ,\quad\gamma>\frac{1}{2}
\endeq
and the inequalities $4\delta\ge3\gamma$ and $4\delta+1>6\gamma$ are satisfied so that both the
statements of Lemmas \ref{existencecond}-\ref{allterms} hold. This means that the existence, uniqueness, and energy equality conditions
are all fulfilled. The region in \eq{blupsoloLap} is unbounded, see Figure \ref{LinftyLinfty} (left).
\begin{figure}[ht]
\begin{center}
\includegraphics[height=40mm]{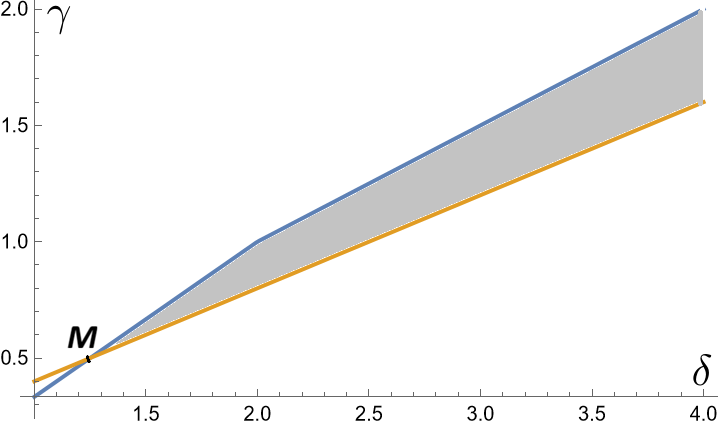}\qquad \includegraphics[height=40mm]{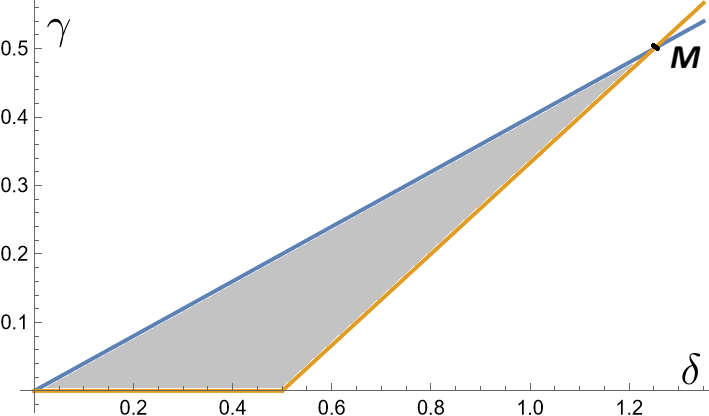}
\end{center}\vskip-5mm
\caption{The point $M(\tfrac54 ,\tfrac12 )$, the region in \eq{blupsoloLap} (left), the triangle in \eq{blupsoloaccel} (right).}\label{LinftyLinfty}
\end{figure}
By taking $(\delta,\gamma)$ inside this region we have $V\in C^1(\R^3\times[0,T])$ while
$\Delta V\not\in L^\infty(\R^3\times(0,T))$. However, for any $q,r<\infty$, if $(\delta,\gamma)$ is sufficiently close to the vertex
$M(\tfrac54 ,\tfrac12 )$, we obtain $\Delta V\in L^r(0,T;L^q(\R^3))$. Let $P$ be as in \eq{pressure} so that $\nabla P$ is given by
\eq{nablaP}, let $f=g+\nabla P$. With slight modifications of the above proofs (in particular, Theorem \ref{main2}),
we conclude the proof of Theorem \ref{final}.

\begin{remark}\label{classical}
The condition \eqref{deltagammalarge} states that both $\delta$ and $\gamma$ are ``large'', as expected if $V$ in \eqref{scelta} has to be smooth with respect to time. To reach the blow-up results in Theorems \ref{main2} and \ref{main3} we need $\delta,\gamma<1/2$, whereas
for Theorem \ref{inflating} we need $\delta<5/4$ and $\gamma<1$.\par
By imposing $2\delta>\max\{5\gamma,3\gamma+1\}$ we obtain classical solutions to \eqref{NS-intro}: to see this, one can use again
\eqref{VVV} combined with \eqref{VLqq} and \eqref{continuity}, then taking $f=g+\nabla p\in C^0(\R^3\times\R_+)$. The statements of Theorems
\ref{mainLinfty}, \ref{mainLinftygrad}, \ref{final} are summarised in the next table.
\begin{center}
\begin{tabular}{|c|c|c|c|c|}
\hline
$2\delta\in$ & $(0,3\gamma)$ & $(3\gamma,4\gamma]$ & $(4\gamma,5\gamma]$ & $(5\gamma,\infty)$ \\
\hline
$t\mapsto\|V(t)\|_{L^\infty(\R^3)}$ & \includegraphics[height=8mm]{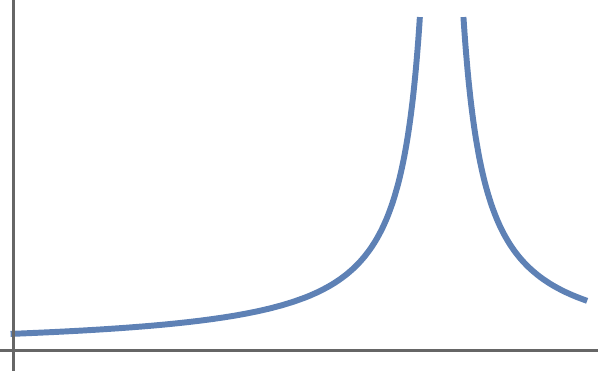} & \includegraphics[height=8mm]{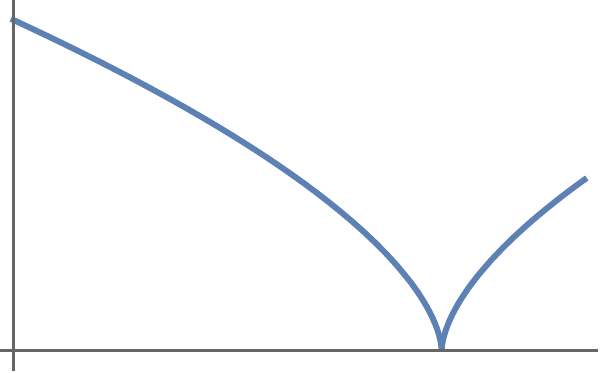} & \includegraphics[height=8mm]{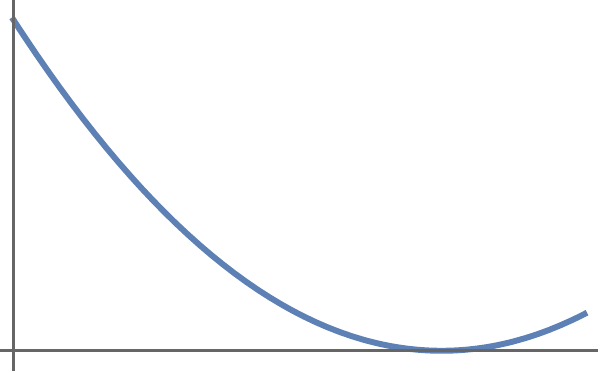} & \includegraphics[height=8mm]{plot3.pdf} \\
\hline
regularity & $V\not\in L^\infty_{\rm loc}(\R^3\times\R_+)$ & $V\in C^0(\R^3\times\R_+)$  & $V\in C^1(\R^3\times\R_+)$ & $V\in C^2(\R^3\times\R_+)$ \\
\hline
annihilation at $t=T$ & none & $V$ & $V,\, \nabla V,\, V_t$ & $V,\, \nabla V,\, V_t,\, \Delta V$ \\
\hline
\end{tabular}
\end{center}

An example of blow-up of the sole acceleration $V_t$ may be obtained by replacing \eqref{blupsmooth} with
\neweq{blupsoloaccel}
\frac{2\delta-1}{3}<\gamma<\frac{2\delta}5\, \Longrightarrow\, V(\xi,T)\equiv\nabla V(\xi,T)\equiv\Delta V(\xi,T)\equiv 0\mbox{ in }\R^3\, :
\endeq
then $\delta<\tfrac{5}{4}$, $\gamma<\tfrac{1}{2}$, and the inequalities $4\delta\ge3\gamma$ and $4\delta+1>6\gamma$ are satisfied: hence,
both the statements of Lemmas \ref{existencecond}-\ref{allterms} hold: existence, uniqueness, and energy equality conditions
are all fulfilled. The region in \eqref{blupsoloaccel} is a triangle having a vertex at the same point $M$, see Figure \ref{LinftyLinfty} (right).
By taking $(\delta,\gamma)$ therein we have $V,\nabla V,\Delta V\in C^0(\R^3\times[0,T])$ and \eqref{blupsoloaccel}
while $V_t\not\in L^\infty(\R^3\times(0,T))$. However, for any $q,r<\infty$, if $(\delta,\gamma)$ is sufficiently close to the vertex
$M$, we obtain $V_t\in L^r(0,T;L^q(\R^3))$. We omit further details.
\end{remark}

\subsection{Proof of Theorem \ref{eulertheo}}

The {\em magic cancellation} leading to \eq{summable} creates a gap between the blow-up of $\nabla\wedge V$ and $(V\cdot\nabla)V$,
see \eq{VVV}. The idea is then to fit $(\delta,\gamma)$ within this gap.\par
Let us rewrite $V=V(\xi,t)$ in \eq{scelta} in spatial spherical coordinates $V=V(|\xi|,\theta,\varphi,t)$:
$$
v^1(|\xi|,\theta,\varphi,t)=\tfrac{g_1(\theta,\varphi)|\xi|^2(T-t)^{2\delta}}{\left[(T-t)^{2\gamma}+|\xi|^2\right]^{5/2}},\quad
v^2(|\xi|,\theta,\varphi,t)=\tfrac{g_2(\theta,\varphi)|\xi|^2(T-t)^{2\delta}}{\left[(T-t)^{2\gamma}+|\xi|^2\right]^{5/2}},\quad
v^3(|\xi|,\theta,\varphi,t)=\tfrac{g_3(\theta,\varphi)|\xi|^2(T-t)^{2\delta}}{\left[(T-t)^{2\gamma}+|\xi|^2\right]^{5/2}},
$$
for some smooth bounded functions $g_i\in C^\infty(\R^2)$ ($i=1,2,3$). If we put $\X=|\xi|\ge0$ and $\Y=(T-t)^\gamma\ge0$, they may be rewritten as
$$
v^1(\X,\Y)=\frac{g_1(\theta,\varphi)\X^2\Y^{2\delta/\gamma}}{\left[\Y^2+\X^2\right]^{5/2}},\quad
v^2(\X,\Y)=\frac{g_2(\theta,\varphi)\X^2\Y^{2\delta/\gamma}}{\left[\Y^2+\X^2\right]^{5/2}},\quad
v^3(\X,\Y)=\frac{g_3(\theta,\varphi)\X^2\Y^{2\delta/\gamma}}{\left[\Y^2+\X^2\right]^{5/2}}.
$$
To check the regularity, we argue in the $(\X,\Y)$-plane, where these functions are in $C^\infty(\R_+^2)$ and we only need to analyse
their behaviour at the origin $(\X,\Y)=(0,0)$ where we set $v^i(0,0)=0$.
We transform these functions in polar coordinates $(\rho,\beta)$ with $0\le\beta\le\tfrac\pi2$ and obtain
$$
v^i(\rho,\beta)=g_i(\theta,\varphi)\, (\cos\beta)^2\, (\sin\beta)^{2\delta/\gamma}\, \rho^{2\delta/\gamma-3},\qquad(i=1,2,3).
$$
Therefore,
\neweq{C0a}
V\in C^{0,\alpha}(\R^3\times[0,T])\ \Longleftrightarrow\ \gamma<\frac{2\delta}{3+\alpha}\, ,
\endeq
while $V\in C^\infty(\R^3\times[0,T))$ because, for $t<T$ the denominator of $V$ does not vanish and neither does $(T-t)$.\par
We apply the same method to $V_t$ in \eq{Vt} and to $(V\cdot\nabla)V$ in \eq{summable} and we obtain
$$
[(V\cdot\nabla)V]^i(\rho,\beta)=\Lambda_i(\theta,\varphi,\beta)\rho^{4\delta/\gamma-7}\, ,\quad
V_t^i(\rho,\beta)=\Gamma_i(\theta,\varphi,\beta)\rho^{(2\delta-1)/\gamma-3}
$$
for some smooth $\Lambda_i,\Gamma_i$ ($i=1,2,3$). Therefore,
$$
(V\cdot\nabla)V\in C^{0,\alpha}(\R^3\times[0,T])\ \Longleftrightarrow\ \gamma<\frac{4\delta}{7+\alpha}\, ,\quad
V_t\in C^{0,\alpha}(\R^3\times[0,T])\ \Longleftrightarrow\ \gamma<\frac{2\delta-1}{3+\alpha}\, .
$$
Then we take $P$ as in \eq{pressure}, so that $\nabla P$ and $(V\cdot\nabla)V$ have the same regularity, see \eq{nablaP}; we put $f=V_t+(V\cdot\nabla)V+\nabla P$
and obtain
$$
f\in C^{0,\alpha}(\R^3\times[0,T])\ \Longleftrightarrow\ \gamma<\min\left\{\frac{4\delta}{7+\alpha},\frac{2\delta-1}{3+\alpha}\right\}\, .
$$

On the other hand, from \eq{VVV} we know that $\nabla V\not\in L^\infty(\R^3\times(0,T))$ if and only if
$\gamma>\tfrac{\delta}{2}$. But we need to refine this result, because the statement is stronger, it requires that the blow-up
of the {\em vorticity} occurs at an $L^1(0,T)$-rate. For $V=(v^1,v^2,v^3)$ as in \eq{scelta}, the relevant partial derivatives
are given by
$$
v^1_y(x,y,z,t)=\frac{2z(T-t)^{2\delta}}{\left[(T-t)^{2\gamma}+|\xi|^2\right]^{5/2}}-
\frac{10y^2z(T-t)^{2\delta}}{\left[(T-t)^{2\gamma}+|\xi|^2\right]^{7/2}},\quad v^1_z(x,y,z,t)=v^1_y(x,z,y,t),
$$
$$
v^2_x(x,y,z,t)=\frac{-z(T-t)^{2\delta}}{\left[(T-t)^{2\gamma}+|\xi|^2\right]^{5/2}}+
\frac{5x^2z(T-t)^{2\delta}}{\left[(T-t)^{2\gamma}+|\xi|^2\right]^{7/2}},\quad v^2_z(x,y,z,t)=v^2_x(z,y,x,t),
$$
$$
v^3_x(x,y,z,t)=\frac{-y(T-t)^{2\delta}}{\left[(T-t)^{2\gamma}+|\xi|^2\right]^{5/2}}+
\frac{5x^2z(T-t)^{2\delta}}{\left[(T-t)^{2\gamma}+|\xi|^2\right]^{7/2}},\quad v^3_y(x,y,z,t)=v^2_x(y,x,z,t).
$$
In particular,
$$
\big|\nabla\wedge V(\xi,t)\big|\ge\left|\frac{3z(T-t)^{2\delta}}{\left[(T-t)^{2\gamma}+|\xi|^2\right]^{5/2}}-
\frac{15y^2z(T-t)^{2\delta}}{\left[(T-t)^{2\gamma}+|\xi|^2\right]^{7/2}}\right|
$$
and, by extending \eq{iff2}$_1$ to the case $q=\infty$, we obtain
$$
\lim_{t\to T}\int_{0}^{t}\|\nabla\wedge V(s)\|_{L^\infty(\R^3)}\, ds=\infty\ \Longleftrightarrow\ \gamma\ge\frac{2\delta+1}{4}\, .
$$
Therefore, the result is proved if there exist $\alpha\in(0,1)$ and $\gamma>0$ such that
$$
\frac{2\delta+1}{4}\le\gamma<\min\left\{\frac{4\delta}{7+\alpha},\frac{2\delta-1}{3+\alpha}\right\}\, .
$$
By taking $\delta$ sufficiently large, one can check that, for any $\alpha\in(0,1)$ there exist $(\delta,\gamma)$ satisfying these constraints.

\section{Appendices}

\subsection{Survey on the Lane-Emden equation and connections}\label{integLE}

Named after the astrophysicists Lane \cite{lane} and Emden \cite{emden}, the Lane--Emden ODE
\neweq{lane-emden}
\frac{1}{\rho^2}\frac{d}{d\rho}\left[\rho^2\frac{d\theta}{d\rho}\right]+\theta^p=0\, ,\qquad\rho>0,\ p>1,
\endeq
describes in dimensionless form the Poisson equation for the gravitational potential of a Newtonian self-gravitating, spherically symmetric,
polytropic fluid; see also subsequent work by Fowler \cite{fowler,fowler2}.\par
The Lane-Emden equation \eq{lane-emden} is the radial version of the stationary superlinear PDE
\begin{equation}\label{anomalous}
-\Delta u=u^p,\quad u\ge0\mbox{ in }\R^3\, ,
\end{equation}
in which the exponent $p=5$ is critical since \eq{anomalous} is the Euler-Lagrange equation of the
``energy functional''
\neweq{energy}
J(u)=\frac{1}{2}\int_{\R^3}|\nabla u|^2-\frac{1}{p+1}\int_{\R^3}|u|^{p+1}
\endeq
defined over the space $\D$ which is the closure of $C^\infty_c(\R^3)$ with respect to the Dirichlet norm.
It is well-known that $\D\subset L^6(\R^3)$ with no embedding into other $L^p(\R^3)$-spaces; hence, the only
exponent for which \eq{energy} is well-defined is $p=5$ and its (nontrivial) critical points are the minimisers of the Sobolev ratio
$\|\nabla u\|_{L^2(\R^3)}^2/\|u\|_{L^6(\R^3)}^2$, thus leading to \eq{anomalous} for $p=5$.\par
When $p>1$ varies, different behaviours appear in \eq{anomalous}, as summarised in the next result.

\begin{proposition}\label{anomalointero}
$(i)$ If $1<p<5$, then the unique solution to \eqref{anomalous} is $u\equiv0$.\par\noindent
$(ii)$ If $p=5$, then for any $\xi_0\in\R^n$, \eqref{anomalous} admits infinitely many positive bounded solutions, radially symmetric and
decreasing with respect to $\xi_0\in\R^n$ explicitly given by
\neweq{bubble}
w_{\eps,\xi_0}(\xi)=\frac{\sqrt[4]{3\eps^2}}{\sqrt{\eps^2+|\xi-\xi_0|^2}}\qquad\forall\eps>0\, ,\ \forall\xi_0\in\R^3\, .
\endeq
Moreover, these are the only positive solutions to \eqref{anomalous} and
\neweq{Lq}
\begin{array}{ll}
w_{\eps,\xi_0}\in L^q(\R^3)\, \Longleftrightarrow\, q>3,\qquad & \|w_{\eps,\xi_0}\|_{L^6(\R^3)}\equiv C\quad\forall\eps>0,\\
\|w_{\eps,\xi_0}\|_{L^q(\R^3)}\stackrel{\eps\to0}{\to}0\quad\forall q\in(3,6),\qquad &
\|w_{\eps,\xi_0}\|_{L^q(\R^3)}\stackrel{\eps\to0}{\to}\infty\quad\forall q\in(6,\infty].
\end{array}
\endeq
$(iii)$ If $p>5$, then for any $\xi_0\in\R^n$, \eqref{anomalous} admits infinitely many positive bounded smooth solutions $u=u(\xi)$,
radially symmetric and decreasing with respect to $\xi_0\in\R^n$, all satisfying (as $|\xi|\to\infty$)
\neweq{asymptotic}
|\xi|^{\tfrac{2}{p-1}}u(|\xi|)\to\left[\tfrac{2(p-3)}{(p-1)^2}\right]^{\tfrac{1}{p-1}}\!,\
|\xi|^{\tfrac{p+1}{p-1}}\big|\nabla u(|\xi|)\big|\to\left[\tfrac{2^p(p-3)}{(p-1)^{p+1}}\right]^{\tfrac{1}{p-1}}\!,\
|\xi|^{\tfrac{2p}{p-1}}\big|\Delta u(|\xi|)\big|\to\left[\tfrac{2(p-3)}{(p-1)^2}\right]^{\tfrac{p}{p-1}}
\endeq
and, hence: $u\in L^q(\R^3)\Leftrightarrow q>\tfrac{3(p-1)}2$, $\nabla u\in L^q(\R^3)\Leftrightarrow q>\tfrac{3(p-1)}{p+1}$,
$\Delta u\in L^q(\R^3)\Leftrightarrow q>\tfrac{3(p-1)}{2p}$.
\end{proposition}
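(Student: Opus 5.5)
The three items are of very different nature, and I would treat them separately. Items $(i)$ and the classification part of $(ii)$ are deep Liouville-type theorems. I would not reprove them but invoke the literature: Gidas--Spruck for the nonexistence of nontrivial nonnegative solutions of \eqref{anomalous} when $1<p<5$, and Caffarelli--Gidas--Spruck (or the moving-spheres argument of Chen--Li) for the fact that, when $p=5$, every positive solution is one of the bubbles \eqref{bubble}. For completeness I would indicate the mechanism in each case.

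For $(i)$, one combines a Pohozaev-type integral identity with the local integral estimates of Gidas--Spruck; the strict subcriticality $p<5$ gives a sign that forces $u\equiv0$.

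For $(ii)$, a direct computation shows that $w_{\eps,\xi_0}$ solves \eqref{anomalous} with $p=5$. Indeed $w_{1,0}=3^{1/4}(1+|\xi|^2)^{-1/2}$ satisfies
\[
-\Delta w_{1,0}=3^{5/4}(1+|\xi|^2)^{-5/2}=w_{1,0}^5,
\]
and the general bubble follows from the invariance of $p=5$ under translations and under the scaling $u\mapsto\eps^{-1/2}u(\cdot/\eps)$. The integrability statements \eqref{Lq} follow from the same scaling. Since $w_{\eps,\xi_0}(\xi)=\eps^{-1/2}w_{1,0}((\xi-\xi_0)/\eps)$, we get
\[
\|w_{\eps,\xi_0}\|_{L^q(\R^3)}=\eps^{3/q-1/2}\,\|w_{1,0}\|_{L^q(\R^3)}.
\]
Because $w_{1,0}\sim|\xi|^{-1}$ at infinity, $w_{1,0}\in L^q(\R^3)$ if and only if $q>3$, which is the same computation as in Lemma \ref{calculus}. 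The sign of $3/q-1/2$ then gives the constancy of the norm at $q=6$, convergence to $0$ for $q\in(3,6)$, and divergence for $q\in(6,\infty]$; for $q=\infty$ the factor is $\eps^{-1/2}$.

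For $(iii)$ I would work with the radial ODE \eqref{lane-emden} by shooting: take $\theta(0)=a>0$ and $\theta'(0)=0$. The scaling $\theta_a(\rho)=a\,\theta_1(a^{(p-1)/2}\rho)$ reduces everything to $a=1$ and produces infinitely many solutions, and translation by $\xi_0$ produces the family centred at $\xi_0$. Radial monotonicity follows from $(\rho^2\theta')'=-\rho^2\theta^p<0$.

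The first step is to show that $\theta_1$ never vanishes. If $\theta_1(R)=0$, the Pohozaev identity on the ball $B_R$ has a boundary term $R^3\theta_1'(R)^2\ge0$, while the interior term is
\[
\Big(\tfrac{3}{p+1}-\tfrac12\Big)\int_{B_R}\theta_1^{p+1},
\]
which is negative for $p>5$. This gives a contradiction.

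Next I would apply the Emden--Fowler transformation $v(s)=\rho^{m}\theta(\rho)$ with $m=2/(p-1)$ and $s=\log\rho$. This turns \eqref{lane-emden} into the autonomous equation
\[
v''+(1-2m)v'-m(1-m)v+v^p=0.
\]
Its damping coefficient $1-2m=(p-5)/(p-1)$ is positive precisely when $p>5$, and its positive equilibrium is $L=[m(1-m)]^{1/(p-1)}=[2(p-3)/(p-1)^2]^{1/(p-1)}$, which is exactly the constant appearing in \eqref{asymptotic}. The energy
\[
E=\tfrac12 v'^2-\tfrac12 m(1-m)v^2+\tfrac{1}{p+1}v^{p+1}
\]
is nonincreasing, since $E'=-(1-2m)v'^2\le0$. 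Moreover $E(s)\to0$ as $s\to-\infty$, because $v\to0$ there. Hence $E$ is bounded above, so $v$ and $v'$ are bounded, and a LaSalle argument shows that $v$ converges to an equilibrium, either $0$ or $L$.

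The step I expect to be the main obstacle is excluding convergence to $0$. Because $E\le0$ and is strictly decreasing, the orbit cannot return to the saddle point $0$ unless it is the homoclinic orbit, which would force $E\equiv0$ and hence $v'\equiv0$; this is impossible. Alternatively, convergence along the stable manifold would give $\theta\sim\rho^{-1}$, so that $\theta$ has finite energy, and the Pohozaev identity on all of $\R^3$ again yields a contradiction for $p>5$.

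Once $v\to L$ is established, the first limit in \eqref{asymptotic} follows. The limits for $\nabla u$ and $\Delta u$ follow from $v'\to0$, which gives $\rho^{m+1}\theta'\to-mL$, and from the equation $\Delta u=-u^p$. Finally, the stated $L^q$ characterisations follow from these power decay rates at infinity together with the boundedness of $u$ near the origin.
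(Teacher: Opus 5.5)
Your proposal is correct. For $(i)$ and for the classification part of $(ii)$ you proceed as the paper does, by citation. Your reference to Caffarelli--Gidas--Spruck or Chen--Li, for the fact that the bubbles are the only positive solutions, is arguably more to the point than the Sobolev-extremal references of Talenti, Aubin and Bliss used in the paper. Your identity $\|w_{\eps,\xi_0}\|_{L^q(\R^3)}=\eps^{3/q-1/2}\|w_{1,0}\|_{L^q(\R^3)}$ is the same computation as the paper's substitution $\rho=\eps s$.

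The real difference is in $(iii)$. The paper does not prove it there. It cites Ni--Serrin, Berestycki--Lions--Peletier and Wang for existence and for the decay of $u$, and then obtains the limits for $\nabla u$ and $\Delta u$ from the radial equation. Integrating $\rho^2\theta'=-\int_0^\rho s^2\theta^p\,ds$ gives $\rho^{m+1}\theta'\to-\frac{p-1}{p-3}L^p=-mL$, and $\Delta u=-u^p$ gives the third limit.

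You instead give a self-contained argument. The Pohozaev identity on the first nodal ball excludes zeros. The Emden--Fowler change of variables then produces a damped autonomous equation with Lyapunov function $E$. Since $E(-\infty)=0$, LaSalle yields convergence to $L$ once the saddle is excluded. Your route buys transparency:
\begin{itemize}
\item it shows exactly why $p>5$ is needed, since the damping coefficient $(p-5)/(p-1)$ must be positive (for $p=5$ the system is conservative and the bubble is precisely the homoclinic orbit);
\item it identifies the constant $L=[m(1-m)]^{1/(p-1)}$ as an equilibrium;
\item it gives the gradient asymptotics directly from $v'\to0$.
\end{itemize}
The paper's route is shorter but rests entirely on the references.

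Two small points to tidy:
\begin{itemize}
\item $E(s)\to0$ as $s\to-\infty$ also requires $v'\to0$. This holds because $v'=m\rho^m\theta+\rho^{m+1}\theta'$ with $\theta,\theta'$ bounded near $\rho=0$.
\item Strict monotonicity of $E$ is not needed to exclude the saddle. $E$ nonincreasing with $E(-\infty)=E(+\infty)=0$ already forces $E\equiv0$, hence $v'\equiv0$ and $v\equiv0$.
\end{itemize}
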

\begin{proof} For Item ($i$), see \cite[Theorem 1.1]{gs}.
Concerning Item ($ii$), for $p=5$ the solutions to \eq{anomalous} coincide with the so-called Talenti bubbles
\cite{talenti} given in \eq{bubble} (see also \cite{aubin} and their first appearance in \cite[(16)]{bliss}, earlier than the introduction of
Sobolev spaces in 1935!). By restricting to the case $\xi_0=0$, and simply denoting $w_\eps=w_{\eps,0}$, switching to spherical coordinates
we find
\neweq{similar}
\|w_\eps\|_{L^q(\R^3)}^q = 4\pi\, 3^{q/4}\, \eps^{q/2}\int_{0}^{\infty}\frac{\rho^2}{(\eps^2+\rho^2)^{q/2}}\, d\rho
\stackrel{\rho=\eps s}{=} 4\pi\, 3^{q/4}\, \eps^{3-q/2} \int_{0}^{\infty}\frac{s^2}{(1+s^2)^{q/2}}\, ds\, .
\endeq
Since we also have $\|w_{\eps}\|_{L^\infty(\R^3)}=w_\eps(0)=\sqrt[4]{3}/\sqrt{\eps}$, we infer \eqref{Lq}.
For Item ($iii$), we refer to \cite[Theorem 3.2]{ns2} and to the remark included in the proof of it, see also \cite{berpll} and
\cite[(E$_2$), p.551]{wang}. The proofs of \eq{asymptotic}$_2$-\eq{asymptotic}$_3$ can be derived from \eq{asymptotic}$_1$ after writing
\eq{anomalous} in its radial form \eq{lane-emden}.\end{proof}

Proposition \ref{anomalointero} shows that the critical exponent $p=5$ discriminates between
existence and nonexistence of positive solutions to \eqref{anomalous} (and also to \eq{lane-emden}). Due to \eq{anomalous}, the exponent at
the denominator of $u^5$ is the same as for the Laplacian, to be compared with $U$ in \eq{explicitexp} and $-\Delta U$.
The $L^3(\R^3)$-threshold of integrability for the bubble \eq{bubble}, see \eq{Lq}, is found also in the blow-up condition for the
unforced Navier-Stokes equations, see \cite{escauriaza,seregin}.
Incidentally we recall that, when $p=5$, there exist infinitely many sign-changing solutions to \eqref{anomalous}, see \cite{ding}.
As far as we are aware,
nothing is known about the possible existence of nonradial positive solutions to \eqref{anomalous} when $p>5$. Finally, by putting $p=5$ in
\eq{asymptotic} (which {\em is not} allowed!), we would obtain $u(|\xi|)\asymp|\xi|^{-1/2}$ as $|\xi|\to\infty$, while the decay of
\eq{bubble} is $w_{\eps,\xi_0}(|\xi|)\asymp|\xi|^{-1}$; therefore, there is discontinuity of the behaviour at infinity as $p\downarrow5$.

\begin{remark}\label{proportional}
One may wonder whether the bubble \eqref{bubble} can be replaced by a similar function with more integrability.
More precisely, consider the function $w(\xi)=(1+|\xi|^\beta)^{-\alpha}$ for some $\alpha,\beta>0$ since, otherwise, $w$ does not
vanish as $|\xi|\to\infty$. Then, by using radial coordinates, we find
$$
-\Delta w=\frac{\alpha\beta|\xi|^{\beta-2}[\beta+1+(1-\alpha\beta)|\xi|^\beta]}{(1+|\xi|^\beta)^{\alpha+2}}
$$
which is proportional to some power $p>1$ of $w$ (thereby satisfying \eqref{anomalous}) if and only if
$(\alpha,\beta)=(1/2,2)$, bringing us back to the bubble \eqref{bubble}. This is also connected with the proof of Lemma \ref{defforce}.
\end{remark}

Let us now recall some results about the superlinear parabolic Lane-Emden equation
\neweq{parabLE}
u_t-\Delta u=u^p\quad(p>1,\ u\ge0)\quad\mbox{in }\R^n\times\R_+\ ,\qquad u(\xi,0)=u_0(\xi)\quad\mbox{in }\R^n\, ,
\endeq
where $u^p$ should be replaced by $|u|^{p-1}u$ when seeking sign-changing solutions. Let us then explain why dissipation is not enough
to exclude blow up for \eq{parabLE}.
Although the energy $t\mapsto J\big(u(t)\big)$ ($J$ as in \eq{energy}) is
decreasing for smooth solutions to \eq{parabLE}, Wang \cite[Theorems 0.1-0.2-0.3]{wang} proved the following statement:

\begin{proposition}\label{alternativewang}
Assume that $p=3$. Then:\par\noindent
-- there exists $u_0\in C^0(\R^3)\cap L^\infty(\R^3)$ such that the unique local solution to \eqref{parabLE} is global, classical and
$\|u(t)\|_{L^\infty(\R^3)}\to0$ as $t\to\infty$;\par\noindent
-- there exists $u_0\in C^0(\R^3)\cap L^\infty(\R^3)$ and $T>0$ such that the unique local solution to \eqref{parabLE} is classical in
$\R^3\times(0,T)$ and $\|u(t)\|_{L^\infty(\R^3)}\to\infty$ as $t\to T$.
\end{proposition}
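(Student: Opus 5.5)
We prove the two items separately, using only comparison principles for \eqref{parabLE} with $p=3$, $n=3$, together with standard local well-posedness in $C^0(\R^3)\cap L^\infty(\R^3)$. Local well-posedness gives a unique classical solution on a maximal interval $(0,T_{\max})$, and $T_{\max}<\infty$ forces $\|u(t)\|_{L^\infty(\R^3)}\to\infty$ as $t\to T_{\max}$. The key structural fact is that $p=3$ lies strictly above the Fujita exponent $1+2/n=5/3$. This is what makes small global solutions possible, while large data still blow up.

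\emph{Global decaying solutions.} We take $u_0=\eps\,G(\cdot,1)$, where $G(\xi,t)=(4\pi t)^{-3/2}e^{-|\xi|^2/4t}$ is the heat kernel and $\eps>0$ is small. Our plan is a Fujita--Weissler type supersolution. We look for $\bar u(\xi,t)=\phi(t)\,\eps\,G(\xi,t+1)$ with $\phi(0)=1$ and $\phi$ increasing. A direct computation shows that $\bar u$ is a supersolution provided
\[
\phi'\ge \eps^2\phi^3\,\|G(\cdot,t+1)\|_{L^\infty(\R^3)}^2=\eps^2\phi^3(4\pi(t+1))^{-3}.
\]
Since $\int_0^\infty(t+1)^{-3}\,dt<\infty$ (precisely because $p-1>2/n$), this ODE has a bounded solution $\phi\le 2$ once $\eps$ is small. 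The comparison principle then yields
\[
0\le u\le \bar u\le 2\eps\,(4\pi(t+1))^{-3/2}.
\]
Hence $T_{\max}=\infty$, the solution is classical, and $\|u(t)\|_{L^\infty(\R^3)}\to0$ as $t\to\infty$.

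\emph{Blow-up.} We use Kaplan's eigenfunction method on a ball. Let $B=B_R(0)$, let $\lambda_1$ and $\psi>0$ be the first Dirichlet eigenvalue and eigenfunction of $-\Delta$ on $B$, and normalise $\int_B\psi=1$. Let $v$ solve \eqref{parabLE} in $B$ with zero Dirichlet data and initial datum $v_0=u_0|_B$. We set $y(t)=\int_B v\psi$. Integrating by parts and applying Jensen's inequality gives
\[
y'\ge -\lambda_1 y+y^3.
\]
Therefore, if $y(0)>\sqrt{\lambda_1}$, then $y$ blows up in finite time.

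We choose $u_0\in C^0(\R^3)\cap L^\infty(\R^3)$ nonnegative and large on $B$, for instance $u_0=M\,\chi$ with $\chi\in C_c^\infty(\R^3)$, $0\le\chi\le1$, $\chi\equiv1$ on $B$, and $M>\sqrt{\lambda_1}$. Then $y(0)=M>\sqrt{\lambda_1}$. Since $u\ge0$ on $\partial B$, the comparison principle gives $u\ge v$ on $B$ for as long as both exist. It follows that $T:=T_{\max}(u_0)<\infty$, that $u$ is classical on $\R^3\times(0,T)$, and that $\|u(t)\|_{L^\infty(\R^3)}\to\infty$ as $t\to T$.

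The only delicate step is the comparison on the bounded domain when the whole-space solution is merely bounded and continuous. Here we rely on the standard maximum principle for bounded classical solutions, applied to $u-v$, which satisfies a linear equation with a bounded coefficient on each compact time interval. Everything else is routine.
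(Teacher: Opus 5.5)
Your proof is correct, but it does not follow the paper's route. The paper proves nothing here: it quotes Wang \cite[Theorems 0.1--0.3]{wang}. Wang's results are stated through pointwise conditions on $u_0$, and they give much finer threshold information (which data lead to global decay and which to blow-up) than the bare existence of one datum of each kind recorded in the proposition.

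You instead give a self-contained classical argument with two parts.
\begin{itemize}
\item For the global part, you use a Fujita--Weissler supersolution $\phi(t)\,\eps\,G(\xi,t+1)$. The integrability of $\|G(\cdot,t+1)\|_{L^\infty(\R^3)}^{2}\sim(t+1)^{-3}$ is exactly where $p=3>1+2/n=5/3$ enters.
\item For the blow-up part, you use Kaplan's eigenfunction argument combined with comparison on a ball. This part actually works for every $p>1$, with threshold $y(0)>\lambda_1^{1/(p-1)}$.
\end{itemize}
All the steps check out: the supersolution inequality $\phi'\ge\eps^2\phi^3(4\pi(t+1))^{-3}$, the explicit bound $\phi\le 2$ for small $\eps$, the Jensen step $\int_B v^3\psi\ge y^3$, and the fact that finite $T_{\max}$ forces $\|u(t)\|_{L^\infty}\to\infty$.

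What your route buys is a transparent, literature-independent proof of precisely the statement claimed. What it does not buy is Wang's sharp dichotomy. Also, because both halves rest on the comparison principle, your argument is tied to the positivity of the heat kernel. The paper's surrounding discussion stresses the opposite: such behaviour is not really due to comparison, since it persists for higher-order parabolic equations.

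One small technical point. With $u_0=M\chi$ and $\chi\equiv1$ on $B$, the datum $v_0=M$ is incompatible with the zero Dirichlet condition on $\partial B$. This is harmless, since the Dirichlet problem is still well posed with $v(t)\to v_0$ in $L^1(B)$ and comparison still applies. Alternatively, you can avoid the issue by taking $v_0=u_0\eta$ with $\eta\in C_c(B)$, $0\le\eta\le1$, and $M$ large enough that $M\int_B\eta\psi>\sqrt{\lambda_1}$.
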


The blow-up for \eq{parabLE} is then intended as in \cite[p.772]{caffa}, see Theorem \ref{mainLinfty} and Remark \ref{sharpcaffa}. Although
the assumptions on $u_0$ involve pointwise inequalities, global existence or blow-up for \eq{parabLE} {\em are not} due to the comparison principle
which holds thanks to the positivity of the heat kernel since similar phenomena also appear in higher order parabolic equations
\cite{galaktionovpohozaev,gazgru}. In spite of the fact that polyharmonic heat kernels may
change sign \cite{fergazgru,gazgru3,gazgru4}, similar results for biharmonic Lane-Emden parabolic equations continue to hold
\cite{CM,egkp2,galaktionovpohozaev,gazgru2}. Moreover, (part of) the results in \cite{wang} have been extended
to {\em systems} of parabolic Lane-Emden equations \cite{andreucci,wangwang}. Hence, the positivity preserving property for the heat equation
is not strictly necessary and one may hope to find similar results also for vector fields and different nonlinearities; in
\cite[p.3244]{farwig} this difficulty was emphasised and attributed to the nonlocality of the Stokes operator \cite[p.3245]{farwig}.\par
Summarising, the behaviour of the solutions to these parabolic equations strongly depends on the nonlinearity and on the initial
datum but, quite paradoxically, it depends much less on the differential operator. This is how we built the blow-up examples for
\eqref{NS-intro}: the main idea was to employ modified versions of \eq{bubble}
(see also \cite[(3.12)]{leray} by choosing adequate exponents $p$ as for \eq{parabLE} and with a time-dependent
concentration parameter $\eps=\eps(t)$, see \eq{Lq}. We focused on positive powers of $|T-t|$, possibly different in the two appearances of
$\eps(t)$ but, perhaps, other choices could lead to better results.

\subsection{Proofs of Theorems \ref{gigasohr0} and \ref{gigasohr1}}\label{proofsgigasohr}

For $T>0$ and $(r,q)\in[1,\infty)\times(1,\infty)$ we define the Banach space
$$
\W^{r,q}_T:=\big\{U\in L_{\rm 1oc}^1(\R^3\times (0,T)):\, U\in W^{1,r}(0,T;L^q_\sigma(\R^3))\cap L^r(0,T;W^{2,q}(\R^3))\big\}
$$
with corresponding norm
$$
\left\|U\right\|_{\W^{r,q}_T}:=\left[\int_0^T\left(\|U_t(t)\|_{L^q(\R^3)}^r+\|U(t)\|_{W^{2,q}(\R^3)}^r\right)dt\right]^{1/r}\,.
$$

The following properties hold.

\begin{lemma}
Let $T>0$; there exists $c>0$ such that
$$
\|U\|_{L^{p}(0,T;L^s(\R^3))}+\|\nabla U\|_{L^{p_1}(0,T;L^{s_1}(\R^3))}\le c\|U\|_{\W^{r,q}_T}\qquad\forall U\in \W^{r,q}_T\, ,
$$
with
\neweq{E2}
\frac2{p}+\frac3{s}-\frac2{r}-\frac3{q}+2\ge 0\qquad\mbox{and}\qquad
\frac2{p_1}+\frac3{s_1}-\frac2{r}-\frac3{q}+1\ge 0\,,
\endeq
and where the strict inequality signs hold if either $p$ (resp. $p_1$) or $s$ (resp. $s_1$) is equal to $\infty$.\label{l:1}
\end{lemma}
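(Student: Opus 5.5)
This is the standard anisotropic (mixed-norm) embedding for maximal-regularity spaces. I would prove it by interpolating the time-regularity and space-regularity of $U$ and then applying Sobolev embeddings.

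\textbf{Step 1: the trace-type bound.} Since $U\in W^{1,r}(0,T;L^q)\cap L^r(0,T;W^{2,q})$, the mixed-derivative theorem gives
$$U\in H^{\theta,r}(0,T;H^{2(1-\theta),q}(\R^3))\qquad\mbox{for every }\theta\in[0,1],$$
with norm controlled by $\|U\|_{\W^{r,q}_T}$. Here $H^{s,q}$ denotes Bessel potential spaces, which are defined by complex interpolation between $L^q$ and $W^{2,q}$ in space and between $L^r$ and $W^{1,r}$ in time. To use this on the finite interval $(0,T)$, I would first extend $U$ from $(0,T)$ to $\R$ with a bounded extension operator, e.g.\ reflection combined with a cutoff. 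On the finite interval this only costs a constant depending on $T$, which is why the lemma allows $c$ to depend on $T$.

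\textbf{Step 2: the Sobolev embeddings.} In time, $H^{\theta,r}(\R)\subset L^p(\R)$ when $\theta-\frac1r\ge-\frac1p$, i.e.\ $\theta\ge\frac1r-\frac1p$. The case $p=\infty$ requires the strict inequality $\theta>\frac1r$. In space, $H^{2(1-\theta),q}\subset L^s$ when $2(1-\theta)-\frac3q\ge-\frac3s$, again with strictness if $s=\infty$. Combining the two conditions, a suitable $\theta\in[0,1]$ exists exactly when
$$\frac1r-\frac1p\le 1-\frac{3}{2q}+\frac{3}{2s},$$
which is precisely \eqref{E2}$_1$ after multiplying by $2$. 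For the gradient I would use $\nabla U\in H^{\theta,r}(H^{1-2\theta,q})$. This needs $\theta\le\frac12$, and it produces the condition with $+1$ in place of $+2$, i.e.\ \eqref{E2}$_2$.

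\textbf{Endpoints and the main obstacle.} The main obstacle is the endpoint bookkeeping, particularly in the gradient case. If the required $\theta$ falls outside $[0,1]$ (resp.\ $[0,\frac12]$), the relevant exponent pair is not actually constrained there. Since $T<\infty$, one can then lower $p$ by H\"older's inequality in time, and handle $s$ by embedding into a smaller space. I would check these cases separately. Also, when $r=1$ (or $q$ near $1$), interpolation of $L^1$-valued spaces is delicate. There I would either replace the Bessel spaces by Besov spaces $B^{\theta}_{r,r}$ using real interpolation, or simply note that the applications to Theorems \ref{gigasohr0}--\ref{gigasohr1} only need $r,q\in(1,\infty)$.
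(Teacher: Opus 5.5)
Your argument is correct where the lemma is actually true (see the second paragraph), and it follows a genuinely different route from the paper.

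\textbf{Comparison with the paper.} The paper follows Solonnikov. After extending $U$ in time, it writes $U=\Gamma_1*(U_t-\Delta U)+\Gamma_2*U$ with a cut-off heat kernel over a unit time window. Young's inequality in space then gives
\[
\|U(t)\|_{L^s(\R^3)}\le c\int_{t-1}^t\frac{\|U_\tau(\tau)\|_{L^q(\R^3)}+\|U(\tau)\|_{W^{2,q}(\R^3)}}{(t-\tau)^{\frac32(\frac1q-\frac1s)}}\,d\tau .
\]
It concludes with Hardy--Littlewood--Sobolev in time when \eqref{E2}$_1$ is an equality, and with Young's inequality when it is strict. That proof uses only convolution inequalities, and in the strict case it works even for $r=1$.

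You instead trade time regularity for space regularity through the mixed-derivative theorem, then apply vector-valued Sobolev embeddings in $t$ and in $\xi$. This needs $1<r<\infty$ and UMD/functional-calculus machinery. That is harmless here, since the Giga--Sohr applications need $1<r<\infty$ anyway. In return, all the exponent bookkeeping reduces to choosing one parameter $\theta$. It also covers the Sobolev-critical case $\frac3q-\frac3s=2$ (take $\theta=0$ and $p\le r$), where the paper's time kernel is $(t-\tau)^{-1}$ and is not integrable.

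\textbf{What needs correcting.} It is not true that an admissible $\theta\in[0,1]$ exists exactly when \eqref{E2}$_1$ holds.
\begin{itemize}
\item The spatial embedding $H^{2(1-\theta),q}(\R^3)\subset L^s(\R^3)$ requires $s\ge q$.
\item The constraint $\theta\ge0$ forces $\frac3q-\frac3s\le 2$, strictly if $s=\infty$.
\item For the gradient the analogous requirements are $s_1\ge q$ and $\frac3q-\frac3{s_1}\le1$.
\end{itemize}
Your remedy of ``embedding into a smaller space'' is unavailable, because Lebesgue spaces on $\R^3$ are not nested. In fact nothing can rescue these cases, because the lemma is false there. Two counterexamples:
\begin{itemize}
\item Take $U=\phi(t)w(\xi)$ with $\phi\in C^\infty_c(0,T)$ and $w=\nabla\wedge(0,0,(1+|\xi|^2)^{-1/10})$, which decays like $|\xi|^{-6/5}$. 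Then $U\in\W^{r,3}_T$ but $U\notin L^p(0,T;L^2(\R^3))$, although \eqref{E2}$_1$ holds for $(q,s)=(3,2)$ and $p=r$.
\item The exponents $(r,q,p,s)=(3,\frac75,2,\infty)$ satisfy \eqref{E2}$_1$ strictly. Yet $W^{2,7/5}(\R^3)\not\subset L^\infty(\R^3)$, so $\phi(t)w(\xi)$ with an unbounded solenoidal $w\in W^{2,7/5}$ is a counterexample.
\end{itemize}

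The right fix is to add these restrictions to the statement. The paper's proof uses them tacitly too: it writes ``where $s\ge q$'' and needs $\frac32(\frac1q-\frac1s)<1$ for its time kernel to be integrable. The restrictions hold in the only applications, namely $p=s=\infty$ with $\frac2r+\frac3q<2$, and $p_1=s_1=\infty$ with $\frac2r+\frac3q<1$. Your H\"older-in-time reduction for $p<r$ is fine.
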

\begin{proof} We show only the inequality for $U$, since the the proof
for that of $\nabla U$ is entirely analogous. To this end, we use an argument similar to that given in \cite[Theorem 2.1]{Solo}. Pick
$U\in\W^{r,q}$ and extend it to the entire space $(\xi,t)\in\R^4$, with preservation of (equivalent) norms, to get the following representation
\neweq{p_7}
U(\xi,t)=\int_{t-1}^t\int_{\R^3}\Gamma_1(\xi-\zeta,t-\tau)(U_\tau-\Delta U)(\zeta,\tau)d\zeta\, d\tau
+\int_{t-1}^t\int_{\R^3}\Gamma_2(\xi-\zeta,t-\tau)U(\zeta,\tau)d\zeta\, d\tau\,.
\endeq
Here,  $\psi:\R\to\R$ a smooth non-negative function such that $\psi\le 1$, $\psi(w)=0$ if $w\ge 1$ and $\psi(w)=1$ if $w\le 1/2$,
$\Gamma_1(\xi,t)=\Gamma (\xi,t)\psi(|\xi|)\psi(t)$,
$$
\Gamma_2(\xi,t)=2\psi(t)\nabla\Gamma(\xi,t)\cdot\nabla\psi(|\xi|)+\Gamma(\xi,t)\left(\psi(t)\Delta\psi(|\xi|)-\psi'(t)\psi(|\xi|)\right)\,,
$$
and
\neweq{p_8}
\Gamma(\xi,t)=\left\{\begin{array}{ll}
\frac{1}{4\pi t}\exp\left(-\frac{|\xi|^2}{4t}\right)&\ \ \mbox{if $t>0$}\\
0 &\ \ \mbox{if $t\le0$}
\end{array}\right.\,.
\endeq
By using  in \eqref{p_7} the generalised Minkowski inequality and Young's inequality for convolutions we get
$$
\|U(t)\|_{L^s(\R^3)}\!\le\!\int_{t-1}^t\!\!\!\big(\|\Gamma_1(t-\tau)\|_{L^\sigma(\R^3)}\!+\|\Gamma_2(t-\tau)\|_{L^\sigma(\R^3)}\big)
\big(\|U_\tau(\tau)\|_{L^q(\R^3)}\!+\|U(\tau)\|_{W^{2,q}(\R^3)}\big)d\tau,
$$
where $s\ge q$, and $1/\sigma=1+1/s-1/q$. As a result, taking into account that, by \eqref{p_8} and the properties of the function $\psi$,
$$
|\Gamma_i(\xi,\rho)|\le \frac{c_1}{(|\xi|^2+\rho)^{\frac32}}\,,\ \ \ i=1,2\,,
$$
we find
\neweq{p_9}
\|U(t)\|_{L^s(\R^3)}\le c_2\int_{t-1}^t\frac{\|U_\tau(\tau)\|_{L^q(\R^3)}+\|U(\tau)\|_{W^{2,q}(\R^3)}}{(t-\tau)^{\frac32(\frac1q-\frac1{s})}}d\tau\,,
\endeq
with $c_2$ independent of $t$. The stated bound \eq{E2} then follows by employing in \eqref{p_9} and the Hardy-Littlewood inequality if
\eqref{E2}$_1$ holds with the equality sign, and the Young inequality otherwise.\end{proof}

Take $\lambda>0$, $A:=\lambda-\Delta$: then, $D(A)=L^q_\sigma(\R^3)\cap W^{2,q}(\R^3)$ for any $1<q<\infty$ and
\begin{equation}
\|u\|_{W^{2,q}(\R^3)}\le C_1\,\|A\|_q\le C_2\,\|u\|_{W^{2,q}(\R^3)}\,,\ \ C_i=C_i(\lambda)\,,\ i=1,2.
\label{A}
\end{equation}
For $\gamma>1$ define
$$
D^{1-\frac1\gamma,\gamma}_q\!:=\Big\{u\in L^q_\sigma(\R^3):\|u\|_{D^{1-\frac1\gamma,\gamma}_q}:=
\|u\|_{L^q(\R^3)}+\Big(\int_0^\infty\|A\, {\rm e}^{-tA}u\|_{L^q(\R^3)}^\gamma\, dt\Big)^\frac1\gamma<\infty\Big\}\,.
$$

The following properties hold.

\begin{lemma}\label{l:2}
There exists $c=c(q,\gamma)>0$ such that
$$
\|u\|_{D^{1-\frac1\gamma,\gamma}_q}\le c\,\|u\|_{W^{2,q}(\R^3)}
\qquad\mbox{for all }u\in D(A)\,.
$$
\end{lemma}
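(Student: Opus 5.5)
The plan is to identify the $D^{1-\frac1\gamma,\gamma}_q$ seminorm as an interpolation (real method) norm and bound it by the graph norm of $A$ via semigroup estimates. The quantity to control is
$$
\int_0^\infty\|A\,{\rm e}^{-tA}u\|_{L^q(\R^3)}^\gamma\,dt .
$$
We shall use the facts that $A=\lambda-\Delta$ with $\lambda>0$ generates a bounded analytic semigroup on $L^q_\sigma(\R^3)$, and that this semigroup is exponentially stable: since ${\rm e}^{-tA}={\rm e}^{-\lambda t}{\rm e}^{t\Delta}$ and the heat semigroup is a contraction on $L^q$ commuting with the Leray projector, we have $\|{\rm e}^{-tA}\|_{\mathcal L(L^q)}\le {\rm e}^{-\lambda t}$.

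For $u\in D(A)$, $A$ commutes with the semigroup, so $A{\rm e}^{-tA}u={\rm e}^{-tA}Au$. Hence
$$
\|A{\rm e}^{-tA}u\|_{L^q(\R^3)}\le {\rm e}^{-\lambda t}\|Au\|_{L^q(\R^3)} .
$$
Raising this to the power $\gamma$ and integrating over $(0,\infty)$ gives
$$
\int_0^\infty\|A{\rm e}^{-tA}u\|_{L^q}^\gamma\,dt\le (\gamma\lambda)^{-1}\|Au\|_{L^q}^\gamma .
$$
Combining this with the left inequality in \eqref{A}, read as $\|Au\|_{L^q(\R^3)}\le C\|u\|_{W^{2,q}(\R^3)}$, and with the trivial bound $\|u\|_{L^q(\R^3)}\le\|u\|_{W^{2,q}(\R^3)}$, we obtain the claim with $c=1+C(\gamma\lambda)^{-1/\gamma}$. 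Here $c$ depends on $\lambda$, which is fixed.

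The only delicate point is justifying that $A$ acts on $u\in D(A)=L^q_\sigma\cap W^{2,q}$ as $\lambda u-\Delta u$, stays in $L^q_\sigma$, and commutes with ${\rm e}^{-tA}$. In the whole space $\R^3$ this is immediate, because the Stokes operator coincides with $-\Delta$ on solenoidal fields: the Laplacian and the Helmholtz projection are both Fourier multipliers and therefore commute. So the heat semigroup preserves solenoidality, and ${\rm e}^{-tA}$ is given explicitly by ${\rm e}^{-\lambda t}$ times convolution with the heat kernel \eqref{p_8}, normalised in $\R^3$.

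I therefore expect no real obstacle. The main care is making sure the commutation $A{\rm e}^{-tA}u={\rm e}^{-tA}Au$ is applied only for $u$ in the domain, which is exactly the hypothesis of Lemma~\ref{l:2}.
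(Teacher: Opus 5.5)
Your proof is correct. It shares the paper's key step, the commutation $A{\rm e}^{-tA}u={\rm e}^{-tA}Au$ for $u\in D(A)$, but it handles the large-time part of the integral differently.

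\textbf{The paper's route.} It splits $\int_0^\infty$ at $t=1$.
\begin{itemize}
\item On $(0,1)$ it uses the commutation together with plain contractivity, $\|{\rm e}^{-tA}Au\|_{L^q(\R^3)}\le\|Au\|_{L^q(\R^3)}$.
\item On $(1,\infty)$ it uses the analytic-semigroup smoothing estimate $\|A{\rm e}^{-tA}u\|_{L^q(\R^3)}\le c\,t^{-1}\|u\|_{L^q(\R^3)}$. It then needs $\gamma>1$ so that $\int_1^\infty t^{-\gamma}\,dt<\infty$.
\end{itemize}

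\textbf{Your route.} You exploit the shift $\lambda>0$ in $A=\lambda-\Delta$, which gives the exponential bound $\|{\rm e}^{-tA}\|\le{\rm e}^{-\lambda t}$. A single estimate then covers all of $(0,\infty)$.
\begin{itemize}
\item It is shorter and gives an explicit constant.
\item It uses neither $\gamma>1$ nor analyticity of the semigroup.
\end{itemize}

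\textbf{What each buys.} The paper's argument is robust in $\lambda$: it would work verbatim for $\lambda=0$, the unshifted heat/Stokes semigroup. Your constant $(\gamma\lambda)^{-1/\gamma}$ blows up as $\lambda\to0$. In both arguments the constant genuinely depends on $\lambda$; the paper's \eqref{1} already carries $c(q,\lambda)$, so your remark about this is consistent with the paper.

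\textbf{One small slip.} The bound $\|Au\|_{L^q(\R^3)}\le C\|u\|_{W^{2,q}(\R^3)}$ is the right-hand part of the chain in \eqref{A}, not the left; the left part is the reverse inequality. In any case it follows at once from $\|Au\|_{L^q(\R^3)}\le\lambda\|u\|_{L^q(\R^3)}+\|\Delta u\|_{L^q(\R^3)}$.
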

\begin{proof} Since $A$ and ${\rm e}^{-tA}$ commute,  we have
\neweq{1}
\|A\, {\rm e}^{-tA}u\|_{L^q(\R^3)}=\| {\rm e}^{-tA}A\,u\|_{L^q(\R^3)}\le \|Au\|_{L^q(\R^3)}\le c(q,\lambda)\,\|u\|_{W^{2,q}(\R^3)}
\qquad\forall t\ge 0\,.
\endeq
Moreover, from classical results, we also have
\neweq{2}
\|A\, {\rm e}^{-tA}u\|_{L^q(\R^3)}\le c(q)\,t^{-1}\|u\|_{L^q(\R^3)}\le c(q)\, \|u\|_{L^q(\R^3)}\qquad\forall t>1\,.
\endeq
Thus, writing
$$
\int_0^\infty\|A\, {\rm e}^{-tA}u\|_{L^q(\R^3)}^\gamma dt=\int_0^1\|A\, {\rm e}^{-tA}u\|_{L^q(\R^3)}^\gamma dt
+\int_1^\infty\|A\, {\rm e}^{-tA}u\|_{L^q(\R^3)}^\gamma dt=:I_1+I_2\,,
$$
we employ \eqref{1} in $I_1$ and \eqref{2} in $I_2$ so that, taking into account that $\gamma>1$, the lemma follows.
\end{proof}

\begin{proof}[Proof of Theorem \ref{gigasohr0}] We seek a solution of the type $(V,P)=(u_S+v,p_S+\phi)\,{\rm e}^{\lambda\,t}$, $\lambda>0$, where $(u_S,p_S)$ solve the Stokes problem
\neweq{StP}
(u_S)_t+A u_S+\nabla p_S=f\,{\rm e}^{-\lambda\,t}\, ,\quad \nabla\cdot  u_S=0\quad\mbox{in }\R^3\times(0,T)\, ,\qquad
u_S(\xi,0)=V_0(\xi)\quad\mbox{for }\xi\in \R^3\, ,
\endeq
whereas $(v,\phi)$ satisfy
$$
\begin{array}{c}
v_t+A v+{\rm e}^{\lambda\,t}\big[(v\cdot\nabla)v+(u_S\cdot\nabla)v+(v\cdot\nabla)u_S+(u_S\cdot\nabla)u_S\big]+\nabla \phi=0\, ,\quad
\nabla\cdot  v=0 \quad\mbox{in }\R^3\times(0,T)\, ,\\
\qquad v(\xi,0)=0 \quad\mbox{for }\xi\in \R^3\, .
\end{array}
$$
From \cite[Theorem 2.7]{giga}, Lemma \ref{l:2} and (\ref{A}), we know that, under the given assumptions, \eq{StP} admits a unique solution
$u_S\in \mathcal W^{r,q}_T$. Furthermore, by Lemma \ref{l:1} and the assumption on the exponents $r,q$, we have
\neweq{3}
\|u_S\|_{L^\infty(\R^3\times(0,T))}\le c\,\|u_S\|_{\W^{r,q}_T}\le c\,(\|f\|_{\W^{r,q}_T}+\|V_0\|_{W^{2,q}(\R^3)})\,.
\endeq
A solution to \eqref{NS-intro} is then sought as a fixed point of the map
$$
M: w\in\W^{r,q}_T\to v\in\W^{r,q}_T
$$
with $v$ solving
\neweq{4}
\begin{array}{cc}
v_t+A v+{\rm e}^{\lambda\,t}[(w\cdot\nabla)w+(u_S\cdot\nabla)w+(w\cdot\nabla) u_S+(u_S\cdot\nabla)u_S]+\nabla \phi=0,\quad
\nabla\cdot  v=0\quad\mbox{in }\R^3\times(0,T),\\
v(\xi,0)=0\quad\mbox{for }\xi\in \R^3\, .
\end{array}
\endeq
From Lemma \ref{l:1} and \eqref{3}
we get
$$\begin{array}{cc}
\|(w\cdot\nabla)w+(u_S\cdot\nabla)w+(w\cdot\nabla)u_S+(u_S\cdot\nabla)u_S\|_{\W_T^{r,q}}\\
\le (\|w\|_{L^\infty(\R^3\times(0,T))}+\|u_S\|_{L^\infty(\R^3\times(0,T))})(\|w\|_{\W_T^{r,q}}+\|u_S\|_{\W_T^{r,q}})
\le c\,(\|w\|_{\W_T^{r,q}}^2+\|u_S\|_{\W_T^{r,q}}^2)\,.
\end{array}
$$
Therefore, applying \cite[Theorem 2.8]{giga} to \eqref{4} and taking into account (\ref{A}), we deduce
\neweq{ST}
\|v\|_{\W_T^{r,q}}\le c_1\,{\rm e}^{\lambda\,T}(\|w\|_{\W_T^{r,q}}^2+\|u_S\|_{\W_T^{r,q}}^2)\,.
\endeq
Next, if $\|w\|_{\W_T^{r,q}}\le R$ for some $R>0$, the previous inequality furnishes
$$
\|v\|_{\W_T^{r,q}}\le c_1\,{\rm e}^{\lambda\,T}(R^2+\|u_S\|_{\W_T^{r,q}}^2)\,,
$$
from which it follows that, if we take $R<{\rm e}^{-\lambda T}/2c_1$ and $T$ small enough that $\|u_S\|_{\W_T^{r,q}}<{\rm e}^{-\frac\lambda2 T}\sqrt{R/2c_1}$, then $M$ maps the ball of
radius $R$ centered at the origin of $\mathcal W^{r,q}$ into itself. Similarly, if we set
$$
v_i=M(w_i)\,,\ \, i=1,2\,,\ \,v=v_1-v_2\,,\ \ w=w_1-w_2
$$
and we use \eqref{4}, we get
$$
\|v\|_{\W_T^{r,q}}\le c_1{\rm e}^{\lambda T}\left(\|w_1\|_{\W_T^{r,q}}+\|w_2\|_{\W_T^{r,q}}+\|u_S\|_{\W_T^{r,q}}\right)\|w\|_{\W_T^{r,q}}
$$
which, again by taking $R$ and $T$ sufficiently small (say, $T\le T_*$), proves that $M$ is a contraction. Existence and uniqueness
are proved. Finally, \eqref{EST} is a consequence of \eqref{3}, \eqref{ST} and Lemma \ref{l:1}.\end{proof}

\begin{proof}[Proof of Theorem \ref{gigasohr1}] It is entirely analogous to that of Theorem \ref{gigasohr0}, once we observe the following facts.
(i) In view of the assumptions on the exponents $r,q$, from Lemma \ref{l:1} it follows
$$
\|\nabla u_S\|_{L^\infty(0,T\times\R^3)}\le c\,\|u_S\|_{\W^{r,q}_T}\,.
$$
(ii) Using again Lemma \ref{l:2} and the previous bound, we get
$$\begin{array}{c}
\|(w\cdot\nabla)w+(u_S\cdot\nabla)w+(w\cdot\nabla)u_S+(u_S\cdot\nabla)u_S\|_{\W_T^{r,q}}\\
\le \big[\|\nabla w\|_{L^\infty(\R^3\times(0,T))}+\|\nabla u_S\|_{L^\infty(\R^3\times(0,T))}\big]
\left[\|w\|_{\W_T^{r,q}}+\|u_S\|_{\W_T^{r,q}}\right]\le c\,(\|w\|_{\W_T^{r,q}}^2+\|u_S\|_{\W_T^{r,q}}^2)\,.
\end{array}
$$
The remaining details, including \eqref{ESTgrad}, are left to the reader.\end{proof}
\bigskip\noindent
{\bf Data availability statement.} Data sharing not applicable to this article as no datasets were generated or analysed during the current study.
\par\noindent
{\bf Conflict of interest statement}. The Authors declare that they have no conflict of interest.
\par\noindent
{\bf Acknowledgements.} G.P.G. is partially supported by NSF Grant DMS-2307811;  F.G.\ is supported by the MUR grant {\em Dipartimento di Eccellenza 2023-27} (Italy) and by INdAM.


{\small
\begin{thebibliography}{999}
\bibitem{albritton} D. Albritton, E. Bru\'{e}, M. Colombo, {\em Non-uniqueness of {L}eray solutions of the forced
{N}avier-{S}tokes equations}, Ann. of Math. 196, 415-455, 2022
\bibitem{andreucci} D. Andreucci, M.A. Herrero, J.J.L. Velázquez, {\em Liouville theorems and blow up behaviour in semilinear reaction
diffusion systems}, Ann. Inst. H. Poincaré Anal. Non Linéaire 14, 1–53, 1997
\bibitem{aubin} T. Aubin, {\em Problèmes isopérimétriques et espaces de Sobolev}, J. Differ. Geom. 11, 573–598, 1976
\bibitem{majda} J.T. Beale, T. Kato, A. Majda, {\em Remarks on the breakdown of smooth solutions for the 3-{D} {E}uler equations},
Comm. Math. Phys. 94, 61-66, 1984
\bibitem{Hugo} H. Beir{\~a}o da Veiga, J. Yang, {\em A note on the development of singularities on solutions to the Navier-Stokes equations
under super critical forcing terms}, arXiv:2411.10823v1 [math.AP]
\bibitem{bliss} G.A. Bliss, {\em An integral inequality}, J. London Math. Soc. 5, 40-46, 1930
\bibitem{berpll} H. Berestycki, P.L. Lions, L.A. Peletier, {\em An {ODE} approach to the existence of positive solutions for
semilinear problems in {${\bf R}^{N}$}}, Indiana Univ. Math. J. 30, 141-157, 1981
\bibitem{caffa} L. Caffarelli, R. Kohn, L. Nirenberg, {\em Partial regularity of suitable weak solutions of the {N}avier-{S}tokes equations},
Comm. Pure Appl. Math. 35, 771-831, 1982
\bibitem{CM} G. Caristi, E. Mitidieri, {\em Existence and nonexistence of global solutions of higher-order parabolic
problems with slow decay initial data}, J. Math. Anal. Appl. 279, 710-722, 2003
\bibitem{coiculescu} M.P. Coiculescu, S. Palasek, {\em Non-uniqueness of smooth solutions of the {N}avier-{S}tokes
equations from critical data}, Invent. Math. 244, 165-219, 2026
\bibitem{cordoba} D. C\'ordoba, L. Mart\'inez-Zoroa, {\em Blow-up for the incompressible 3D-Euler equations with uniform
$C^{1,\frac{1}{2}-\epsilon}\cap L^2$ force}, arXiv:2309.08495v1 [math.AP]
\bibitem{ding} W.Y. Ding, {\em On a conformally invariant elliptic equation on $\R^n$}, Comm. Math. Phys. 107, 331-335, 1986
\bibitem{egkp2} Y.V. Egorov, V.A. Galaktionov, V.A. Kondratiev, S.I. Poho\v zaev, {\em Global solutions of higher-order
semilinear parabolic equations in the supercritical range}, Adv. Diff. Eq. 9, 1009-1038, 2004
\bibitem{emden} R. Emden, {\em Gaskugeln: Anwendungen der Mechanischen W\"armetheorie auf Kosmologische und Meteorologische Probleme},
Teubner, Berlin, 1907. Reprinted by Forgotten Books, 2018
\bibitem{escauriaza} L. Escauriaza, G.A. Ser\"{e}gin, V. Shverak, {\em $L_{3,\infty}$-solutions of {N}avier-{S}tokes equations and
backward uniqueness}, Uspekhi Mat. Nauk 58, 3-44, 2003
\bibitem{farwig} R. Farwig, {\em From {J}ean {L}eray to the millennium problem: the {N}avier-{S}tokes equations},
J. Evol. Equ. 21, 3243-3263, 2021
\bibitem{feffer} C.L. Fefferman, {\em Existence and smoothness of the Navier-Stokes equation}, The millennium prize problems,
Clay Math. Inst., Cambridge, MA, 57-67, 2006
\bibitem{fergazgru} A. Ferrero, F. Gazzola, H.C. Grunau, {\em Decay and eventual local positivity for biharmonic parabolic
equations}, Disc. Cont. Dynam. Syst. A 21, 1129-1157, 2008
\bibitem{fowler} R. Fowler, {\em The solution of Emden's and similar differential equations}, Monthly Notices Roy. Astronom. Soc. 91, 63-91, 1930
\bibitem{fowler2} R. Fowler, {\em Further studies of Emden's and similar differential equations}, Quarterly J. Math. Oxford Ser. 2, 259-288, 1931
\bibitem{fujiwara} D. Fujiwara, H. Morimoto, {\em An {$L_{r}$}-theorem of the {H}elmholtz decomposition of vector fields},
J. Fac. Sci. Univ. Tokyo Sect. IA Math. 24, 685-700, 1977
\bibitem{fursikov} A.V. Fursikov, {\em On some control problems and results concerning the unique solvability of a mixed boundary value
problem for the three-dimensional Navier–Stokes and Euler systems}, Dokl. Akad. Nauk SSSR 252:5, 1066–1070, 1980
\bibitem{fursikov2} A.V. Fursikov, {\em Control problems and theorems concerning the unique solvability of a mixed boundary value
problem for the three-dimensional Navier–Stokes and Euler equations}, Math. USSR Sbornik 43, 251–273, 1982
\bibitem{galaktionovpohozaev} V.A. Galaktionov, S.I. Poho\v{z}aev, {\em Existence and blow-up for higher-order semilinear parabolic equations:
majorizing order-preserving operators}, Indiana Univ. Math. J. 51, 1321-1338, 2002
\bibitem{Galdi-evol} G.P. Galdi, {\em An Introduction to the Navier-Stokes Initial-Boundary Value Problem}, In:
G.P. Galdi, J.G. Heywood, R. Rannacher (eds), Fundamental Directions in Mathematical Fluid Mechanics.
Advances in Mathematical Fluid Mechanics. Birkh\"auser, Basel, 1-70, 2000
\bibitem{Galdimeta} G.P. Galdi, G. Metafune, C. Spina, C. Tacelli, {\em Homogeneous {C}alder\'{o}n-{Z}ygmund estimates for a class of
second-order elliptic operators}, Commun. Contemp. Math. 17, 1450017, 14pp., 2015
\bibitem{gazgru} F. Gazzola, H.C. Grunau, {\em Radial entire solutions for supercritical biharmonic equations}, Math. Ann. 334, 905-936, 2006
\bibitem{gazgru2} F. Gazzola, H.C. Grunau, {\em Global solutions for superlinear parabolic equations involving the biharmonic
operator for initial data with optimal slow decay}, Calc. Var. Part. Diff. Eq. 30, 389-415, 2007
\bibitem{gazgru3} F. Gazzola, H.C. Grunau, {\em Eventual local positivity for a biharmonic heat equation in $\R^n$},
Disc. Cont. Dynam. Syst. S 1, 83-87, 2008
\bibitem{gazgru4} F. Gazzola, H.C. Grunau, {\em Some new properties of biharmonic heat kernels}, Nonlin. Anal. TMA 70, 2965-2973, 2009
\bibitem{gs} B. Gidas, J. Spruck, {\em Global and local behavior of positive solutions of nonlinear elliptic equations},
Comm. Pure Appl. Math. 34, 525-598, 1981
\bibitem{giga} Y. Giga, H. Sohr, {\em Abstract {$L^p$} estimates for the {C}auchy problem with applications to the {N}avier-{S}tokes equations in exterior domains},
J. Funct. Anal. 102, 72-94, 1991
\bibitem{hopf} E. Hopf, {\em \"Uber die anfangswertaufgabe f\"ur die hydrodynamischen grundgleichungen}, Math. Nachr. 4, 213-231 (1951)
\bibitem{hou} T. Hou, Y. Wang, C. Yang, {\em Nonuniqueness of Leray-Hopf solutions to the unforced incompressible 3D Navier-Stokes equation},
arXiv:2509.25116v2 [math.AP]
\bibitem{kozono} H. Kozono, H. Sohr, {\em Remark on uniqueness of weak solutions to the {N}avier-{S}tokes equations}, Analysis 16, 3,
255-271, 1996
\bibitem{lane} J.H. Lane, {\em On the theoretical temperature of the Sun, under the hypothesis of a gaseous mass maintaining its
volume by its internal heat, and depending on the laws of gases as known to terrestrial experiment}, American Journal of Science 50 (148),
57–74, 1870
\bibitem{leray} J. Leray, {\em Sur le mouvement d'un liquide visqueux emplissant l'espace}, Acta Mathematica 63, 193-248, 1934
\bibitem{lions} J.L. Lions, {\em Sur la régularité et l'unicité des solutions turbulentes des équations de {N}avier {S}tokes},
Rend. Sem. Mat. Univ. Padova 30, 16-23, 1960
\bibitem{miyakawa} T. Miyakawa, {\em On nonstationary solutions of the {N}avier-{S}tokes equations in an exterior domain},
Hiroshima Math. J. 12, 115-140, 1982
\bibitem{ns2} W.M. Ni, J. Serrin, {\em Existence and nonexistence theorems for ground states for quasilinear partial differential equations.
The anomalous case}, Accad. Naz. Lincei, Atti dei Convegni 77, 231-257, 1986
\bibitem{seregin} G. Seregin, {\em A certain necessary condition of potential blow up for Navier-Stokes equations},
Comm. Math. Phys. 312, 833-845, 2012
\bibitem{serrin2} J. Serrin, {\em The initial value problem for the Navier-Stokes equations}, In: Nonlinear Problems (Proc. Sympos.,
Madison, 1962), Univ. Wisconsin Press, Madison, 69–98, 1963
\bibitem{simader} C. Simader, H. Sohr, {\em A new approach to the {H}elmholtz decomposition and the {N}eumann problem in {$L^q$}-spaces
for bounded and exterior domains}, In: Mathematical problems relating to the {N}avier-{S}tokes equation, Ser. Adv. Math. Appl. Sci.
11, 1-35, World Sci. Publ., River Edge, NJ, 1992
\bibitem{sohr} H. Sohr, {\em The {N}avier-{S}tokes equations - An elementary functional analytic approach}, Birkh\"{a}user Advanced Texts:
Basel Textbooks, 2001
\bibitem{Solo} V.A. Solonnikov, {\em Estimates of the solutions of the nonstationary Navier-Stokes system} (Russian)
{\em Boundary value problems of mathematical physics and related questions in the theory of functions},
Zap. Naucn. Sem. Leningrad. Otdel. Mat. Inst. Steklov (LOMI), 38, 153-231, 1973
\bibitem{talenti} G. Talenti, {\em Best constant in Sobolev inequality}, Ann. Mat. Pura Appl. 110, 353-372, 1976
\bibitem{wang} X. Wang, {\em On the {C}auchy problem for reaction-diffusion equations}, Trans. Amer. Math. Soc. 337, 549-590, 1993
\bibitem{wangwang} X. Wang, J. Wang, {\em Global existence and stability analysis for the parabolic Lane–Emden system with initial data},
Nonlinearity 39, 4, 045008, 2026
\bibitem{zhang} Q.S. Zhang, {\em A blow up solution of the {N}avier-{S}tokes equations with a super critical forcing term},
J. Math. Study 58, 4, 429-438, 2025
\end{thebibliography}
}
\end{document}